\newtheorem{definition}{Definition}[section]
\newtheorem{lemma}[definition]{Lemma}
\newtheorem{proposition}[definition]{Proposition}
\newtheorem{example}[definition]{Example}
\newtheorem{corollary}[definition]{Corollary}
\newtheorem{conjecture}[definition]{Conjecture}
\newtheorem{theorem}[definition]{Theorem}
\newtheorem*{proposition*}{Proposition}
\theoremstyle{remark}
\newtheorem{remark}[definition]{Remark}
\newcommand{\spanVec}{\ensuremath{\mathrm{span}\,}}
\newcommand{\reInt}{\mathrm{ri}\,}
\newcommand{\SOC}[2]{{\mathcal{L}^{#2} _{#1}}}
\newcommand{\norm}[1]{\lVert{#1}\rVert}
\newcommand{\T}{T} 
\newcommand{\inProd}[2]{\langle #1 , #2 \rangle }
\newcommand{\cS}{\mathcal{S}}
\newcommand{\cppcone}[1]{\mathrm{CP}^{#1}}
\newcommand{\cscone}[1]{\mathrm{CS}^{#1}}
\newcommand{\psdcone}[1]{\cS_+^{#1}}
\newcommand{\dnncone}[1]{\mathrm{DNN}^{#1}}
\renewcommand{\S}{\cS}
\newcommand{\RR}{\mathbb{R}}
\newcommand{\rank}{\textup{rank}}
\newcommand{\tr}{\textup{tr}}
\newcommand{\stdCone}{ {\mathcal{K}}}
\newcommand{\stdFace}{F}
\newcommand{\ambSpace}{ \mathcal{V} }
\newcommand{\zeroes}{\operatorname{zeroes}}
\newcommand{\coner}{\operatorname{cone\,rows}}
\newcommand{\conec}{\operatorname{cone\,cols}}
\newcommand{\euc}{\textup{Euc}}
\renewcommand{\Re}{\mathbb{R}}   
\title{Self-dual polyhedral cones and their slack matrices}
\numberwithin{equation}{section}
\author{
    Jo\~ao Gouveia\thanks{CMUC, Department of Mathematics, University of Coimbra, 3001-454 Coimbra, Portugal.
    This author was supported by the Centre for Mathematics of the University of Coimbra (UIDB/00324/2020,
    funded by the Portuguese Government through FCT/MCTES)
    Email: \href{jgouveia@mat.uc.pt }{jgouveia@mat.uc.pt}.} \and
	Bruno F. Louren\c{c}o\thanks{Department of Statistical Inference and Mathematics, Institute of Statistical Mathematics, Japan.
	This author was supported partly by the JSPS Grant-in-Aid for Early-Career Scientists  JP19K20217 and the Grant-in-Aid for Scientific Research (B) JP21H03398.
	Email: \href{bruno@ism.ac.jp}{bruno@ism.ac.jp}.}
}
\begin{document}	
\maketitle
\begin{abstract}
We analyze self-dual polyhedral cones and prove several properties about their slack matrices.
In particular, we show that self-duality is equivalent to the existence of a positive semidefinite (PSD) slack.
Beyond that, we show that if the underlying cone is irreducible, then the corresponding PSD slacks are not only doubly nonnegative matrices (DNN) but are extreme rays of the cone of DNN matrices, which correspond to a family of extreme rays not previously described. %This leads to a curious consequence for $5\times 5$ DNN matrices: the extreme rays that are not rank $1$ must come from slack matrices of self-dual cones over a pentagon.
More surprisingly, we show that, unless the cone is simplicial, PSD slacks not only fail to be completely positive matrices but they also lie outside the cone of completely positive semidefinite matrices. Finally, we show how one can use semidefinite programming to probe the existence of self-dual cones with given combinatorics. Our results are given for polyhedral cones but we also discuss some consequences for negatively self-polar polytopes.
\end{abstract}
{\small{\bfseries Keywords:}
self-dual cone, slack matrix, polyhedral cone, polytope, doubly nonnegative matrix, completely positive semidefinite matrix, completely positive matrix.}

\section{Introduction}
%Some research projects are akin to climbing a foggy mountain: we set our sights to the top and do our darnest to reach it, uncertain whether nature will resist our efforts. Nevertheless, the goal is somewhat clear, even though the fog makes the path less than straightforward. \emph{This} project, however, is akin to entering an unknown forest with no set goal, but with an open heart to marvel at whatever curious creatures and plants we can find along the way.

Slack matrices of polytopes or polyhedral cones codify their structure in matrix form \cite{slack}. Since their introduction in \cite{Ya91} they have provided a fundamental tool for the study of the complexity of combinatorial optimization problems. 
For example, obtaining bounds to certain factorization ranks (such as the nonnegative rank or the semidefinite rank) for these matrices provides a way to bound the extension complexity of the underlying polytopes \cite{FMPTHW12,GPT13,LRS15}.
%For example,  certain types of factorization ranks (such as the nonnegative rank or the semidefinite rank) for those  matrices provide important information on the extension complexity of the underlying polytopes. 
Slack matrices have also been used to attack problems of realizability or projective uniqueness for polytopes \cite{GMRW20,GMA23}.

In this work, we use the study of self-duality of convex cones, a classical subject in both convex and discrete geometry \cite{BF76,Bar81,grunbaum1988selfduality,lovasz1983self}, to make a surprising connection between the theory of slack matrices and the geometry of some important matrix cones. It turns out that slack matrices of self-dual polyhedral cones are deeply connected with the extreme rays of the doubly nonnegative matrices, the completely positive matrices and, more generally, the completely positive semidefinite matrices. These three matrix cones are widely studied examples of convex cones with non-trivial  geometry  \cite{BP94,BS03,laurent2015conic}. They have important theoretical applications and have been a fruitful target for research for several decades now. 

We focus our attention on polyhedral cones  that are self-dual with respect to some inner product. The most classical example is the nonnegative orthant $\Re^n_+$, but there are many more such cones. For example, it is possible to construct polyhedral self-dual cones in $\Re^3$ generated by regular polygons with any odd number of vertices, see \cite[pg.~152]{BF76} for the details.
 Our main results are as follows.
\begin{enumerate}[$(i)$]
	\item A polyhedral cone is self-dual with respect to some inner product if and only if one of its slack matrices is positive semidefinite (PSD). Furthermore, if one of its slack matrices is PSD then, rearranging the rows and rescaling the columns if necessary, \emph{all} of them are PSD, see Theorem~\ref{thm:self_psd}. 
		This fits in with the general philosophy that geometric properties of  polyhedral sets get translated to linear algebraic properties of slack matrices. %As a consequence, it is possible to search for self-dual polyhedral cones by solving semidefinite programming problems. Some preliminary explorations on this are discussed in Section~\ref{sec:exp}.
	\item Because slack matrices have nonnegative entries, the PSD slacks of self-dual cones are actually doubly nonnegative (DNN) matrices. It turns out that when the underlying cone is \emph{irreducible} the PSD slack matrices are not only DNN matrices, but they must be extreme rays of the cone of DNN matrices, see Theorem~\ref{theo:slack_ext}.
	For $5\times 5$ DNN matrices this leads to an interesting characterization of extreme rays in terms of self-dual cones over a pentagon, see Theorem~\ref{theo:dnn5}. And, more generally, we can generate new families of extreme rays of the DNN matrices distinct from the ones constructed in \cite{HL96}. 
	
	\item %In order to explain our final result, we recall that the cone of $n\times n$ DNN matrices contains the cone $n\times n$ of completely positive (CP) matrices. 
	%It is a well-known fact that both cones coincide when $n \leq 4$ and they are different for $n \geq 5$. Still, in general, it is non-trivial to check whether a given DNN matrix is CP or not and there are studies on trying to understand the differences between both cones, e.g., \cite{BAD09}. Because PSD slacks of irreducible self-dual polyhedral cones correspond to extreme rays of the cone of DNN matrices with rank greater than one, they cannot be CP matrices, because all extreme rays of the cone of CP matrices have rank one.
	%However, much more can be said about this. 
	We show that non-diagonal PSD slack matrices of self-dual polyhedral cones not only lie outside the cone of completely positive (CP) matrices, but they also do not belong to the cone of \emph{completely positive semidefinite} matrices, see Theorem~\ref{theo:diag_slack} and Corollary~\ref{col:cps}.
	The cone of completely positive semidefinite matrices  is a far reaching generalization of CP matrices with many interesting applications \cite{laurent2015conic}.
	Through our results, we can generate many examples of DNN matrices that fail to be completely positive semidefinite.
	
	\item Finally, we use the new characterization of self-duality to propose a semidefinite programming approach to numerically search for self-dual realizations of a given combinatorial type of polyhedral cone. We then apply this framework to enumerate low dimensional self-dual polyhedral cones with few extreme rays.
	
\end{enumerate} 
This paper is organized as follows.
In Section~\ref{sec:prel} we discuss the notation and some background material. In Section~\ref{sec:slack}, we present the basic results concerning the slack matrices of self-dual polyhedral cones and provide an alternative interpretation of these results in terms of polytopes. Then, 
in Section~\ref{sec:dnn_cs} we prove our results concerning the extreme rays of the doubly nonnegative cone and the completely positive semidefinite matrices. 
Finally, some numerical explorations obtained by means of semidefinite programming and future perspectives are presented in Section~\ref{sec:exp}.

\section{Preliminaries}\label{sec:prel}
Let $\stdCone \subseteq \Re^d$ be a closed convex cone and let $\inProd{\cdot}{\cdot}$ be an inner product for $\Re^d$. Then, 
the dual cone of $\stdCone$ with 
respect to $\inProd{\cdot}{\cdot}$ is
\begin{equation}\label{eq:dual_cone}
\stdCone^* \coloneqq \{y \in \Re^d \mid \inProd{x}{y} \geq 0, \forall x \in \stdCone \}.
\end{equation}
We recall that the \emph{lineality space} of $\stdCone$ is $\stdCone \cap - \stdCone$ and is the largest subspace contained in $\stdCone$.
We also have the following relation
\begin{equation}\label{eq:lineality}
\stdCone \cap - \stdCone = (\stdCone^{*})^\perp,
\end{equation}
where the notation $C^\perp$ indicates elements orthogonal to a subset $C \subseteq \Re^d$ with respect the inner product $\inProd{\cdot}{\cdot}$.
The span and dimension of $\stdCone$ will be denoted by $\spanVec \stdCone$ and $\dim \stdCone$, respectively.
With that, $\stdCone$ is said to be \emph{pointed} if $\stdCone \cap - \stdCone = \{0\}$ and \emph{full-dimensional} if $\dim \stdCone = d$.
If $\stdCone$ can be written as the set of solutions of finitely many linear equalities and inequalities then $\stdCone$ is said to be \emph{polyhedral}.

A \emph{face} of $\stdCone$ is a convex cone $\stdFace \subseteq \stdCone$
such that the condition ``$x,y \in \stdCone, x+y \in \stdFace$'' implies $x,y \in \stdFace$. An \emph{extreme ray} is a face of the form $\{\alpha x \mid \alpha \geq 0\}$ for some $x \in \stdCone$. In this case, we say that $\stdFace$ is the extreme ray generated by $x$. A \emph{facet} is a face of $\stdCone$ that has dimension $\dim \stdCone -1$.

We denote the set of real $n\times m$ matrices by $\Re^{n\times m}$. If $A \in \Re^{n\times m}$, the transpose and inverse (if exists) of $A$ will be denoted by $A^\T$ and $A^{-1}$, respectively.
We denote by \emph{$\coner(A)$} the convex cone in $\Re^m$ generated by its rows and by \emph{$\conec(A)$} the convex cone in $\Re^n$ generated by its columns.

Given closed convex cones $\stdCone \subseteq \Re^{d_1}$ and $\stdCone' \subseteq \Re^{d_2}$ we say that \emph{$\stdCone$ and $\stdCone'$ are linearly isomorphic} if there exists a linear bijection $A: \spanVec \stdCone \mapsto \spanVec\stdCone'$ such that 
$A(\stdCone) = \stdCone'$. 
If $d_1 = d_2$, the dual cones are related as follows 
\begin{equation}\label{eq:dual_iso}
\stdCone'^* = (A^*)^{-1} \stdCone^*,
\end{equation}
where $A^*$ the adjoint of $A$ according to the underlying inner product.

The space of $n\times n$ real symmetric matrices will be denoted by $\S^n$. 
The cone of positive semidefinite (PSD) matrices in $\S^n$ will be denoted by $\psdcone{n}$.
The doubly nonnegative matrices (i.e., matrices that are PSD and nonnegative) in $\S^n$ will be denoted by $\dnncone{n}$.
Finally, we denote by $\cppcone{n}$ the  cone of $n\times n$ completely positive matrices, which we recall that consists of the $n\times n$ matrices $A$ admitting a decomposition $A = XX^\T$  for some $X \in \Re^{n\times m}$ such all the entries of $X$ are nonnegative.

\subsection{Inner products and self-duality}\label{sec:self_dual}

We will use the notation $\stdCone^*_{\euc}$ to indicate the dual cone of $\stdCone$ obtained under the Euclidean inner product  so that 
\[
\stdCone^*_{\euc} \coloneqq \{y \in \Re^d \mid x^{\T}y \geq 0, \forall x \in \stdCone \}.
\]
We note that if the inner product $\inProd{\cdot}{\cdot}$ changes then the dual cone $\stdCone^*$ changes as well. In particular, if $A$ is the positive definite matrix associated to some inner product $\inProd{\cdot}{\cdot}$, the corresponding dual of $\stdCone$ can be written as 
\begin{equation}\label{eq:dual_cone2}
\stdCone^* \coloneqq \{y \in \Re^d \mid x^\T A y \geq 0, \forall x \in \stdCone \} = A^{-1} \stdCone^*_{\euc}.
\end{equation}
Conversely, if $\hat \stdCone = A^{-1} \stdCone^*_{\euc}$ for some positive definite matrix $A$, then the dual of $\stdCone$ under the inner product induced by $A$ is precisely $\hat \stdCone$.
In conclusion, all possible distinct dual cones of $\stdCone$ are given by 
$A^{-1}\stdCone^*_{\euc}$, where $A$ is a symmetric positive definite matrix.

With these subtleties in mind, we define self-duality as follows.

\begin{definition}[Self-dual cones]\label{def:self_dual}
 A closed convex cone $\stdCone \subseteq \Re^d$ is said to be \emph{self-dual} if there exists some inner product $\inProd{\cdot}{\cdot}$ under which 
 $\stdCone = \stdCone^*$. In this case, we say that $\stdCone$ is self-dual with respect to $\inProd{\cdot}{\cdot}$.
\end{definition}
Typical examples of self-dual cones include the nonnegative orthant and the positive semidefinite matrices. However, there are many more self-dual cones, as we will see throughout this paper, see also \cite{BF76}.

We remark that in the Definition~\ref{def:self_dual}, it is entirely possible that $\stdCone$ is self-dual but $\stdCone \neq \stdCone^*_{\euc}$. One such example are the so-called power cones, see \cite[Section~4.3.1]{Ch09}.
Still, self-duality can be characterized in terms of $\stdCone^*_{\euc}$ as follows.
\begin{proposition}[{e.g., \cite[Proposition~1]{IL19}}]\label{prop:self_dual}
	Let $\stdCone \subseteq \Re^d$ be a closed convex cone. Then, there exists an inner product on $\Re^d$ such that $\stdCone = \stdCone^*$ if and only if there exists a symmetric positive definite matrix $A$ such that 
	$A\stdCone = \stdCone^*_{\euc}$. 
\end{proposition}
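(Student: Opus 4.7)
The plan is to observe that this proposition is really just a restatement of the discussion preceding it, so the proof is essentially an unpacking of equation~\eqref{eq:dual_cone2}. I would begin by recalling the standard fact that every inner product on $\Re^d$ is of the form $\inProd{x}{y} = x^\T A y$ for a unique symmetric positive definite matrix $A$, and conversely every such $A$ induces an inner product. This puts inner products on $\Re^d$ in bijection with symmetric positive definite matrices.

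For the forward direction, I would assume $\stdCone = \stdCone^*$ under some inner product, write the inner product as $x^\T A y$ for a suitable symmetric positive definite $A$, and then apply~\eqref{eq:dual_cone2} to rewrite the identity $\stdCone = \stdCone^*$ as $\stdCone = A^{-1}\stdCone^*_{\euc}$, which is equivalent to $A\stdCone = \stdCone^*_{\euc}$. For the converse, given $A$ symmetric positive definite with $A\stdCone = \stdCone^*_{\euc}$, I would define $\inProd{x}{y} \coloneqq x^\T A y$, note that this is indeed an inner product, and invoke~\eqref{eq:dual_cone2} in the other direction: the dual of $\stdCone$ under $\inProd{\cdot}{\cdot}$ equals $A^{-1}\stdCone^*_{\euc} = A^{-1}(A\stdCone) = \stdCone$.

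There is essentially no obstacle here: the nontrivial content is already in equation~\eqref{eq:dual_cone2}, and once that formula is in hand the proposition is a one-line manipulation. The only thing to be careful about is the direction of the identification between inner products and positive definite matrices, and the fact that $A$ is invertible so multiplication by $A$ is a bijection on $\Re^d$, which justifies passing from $\stdCone = A^{-1}\stdCone^*_{\euc}$ to $A\stdCone = \stdCone^*_{\euc}$.
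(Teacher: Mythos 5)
Your proof is correct and takes essentially the same route the paper implicitly relies on: the paper presents this proposition as a citation to \cite[Proposition~1]{IL19} and does not write out a proof, but the discussion immediately preceding it — the identification of inner products with symmetric positive definite matrices and equation~\eqref{eq:dual_cone2} — is precisely the content you unpack, and your two-directional argument fills that gap cleanly and correctly.
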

Proposition~\ref{prop:self_dual} has the following consequence. In order to show that $\stdCone$ is \emph{not} self-dual, it does not suffice to merely check that $\stdCone \neq \stdCone^*_{\euc}$, one must go through the harder task of showing that $\stdCone$ is not linearly isomorphic
to $\stdCone^*_{\euc}$ through a positive definite matrix. It might be fair to say that this latter  task is significantly harder than the former. 

For example, for the $p$-cones $
\SOC{p}{n+1}\coloneqq\{(t,x)\in\Re\times \Re^n \mid t \geq \norm{x}_p\}$, where $\norm{\cdot}_p$ is the $p$-norm, it is straightforward to verify that
if $p \in (1,\infty)$, $p \neq 2$ and $n\geq 2$ then $\SOC{p}{n+1} \neq  (\SOC{p}{n+1})^*_{\euc} = \SOC{q}{n+1}$, where $1/p + 1/q = 1$. 
However, checking that $\SOC{p}{n+1}$ is \emph{never} self-dual with respect to any inner product requires more work and is a consequence of the fact that for those $p$ and $n$ the cone $\SOC{p}{n+1}$ is never isomorphic to its dual cone, see \cite[Theorem~11 and Corollary~14]{IL19} or \cite[Section~5.2]{LLP21}. 
A more subtle type of failure happens for the cone $\SOC{1}{3}$ which is \emph{actually} isomorphic to its Euclidean dual cone $\SOC{\infty}{3} \coloneqq \{(t,x) \in \Re\times \Re^2 \mid t \geq \norm{x}_{\infty}\}$ but the isomorphism cannot be realized through a positive definite matrix so $\SOC{\infty}{3}$ is not self-dual, e.g., see \cite[Equation~(2) and the proof of Corollary~14]{IL19}.
These subtleties are important because, for example, a cone may become \emph{symmetric}  under a change of inner product \cite{Or20}, and symmetric cones enjoy many favourable theoretical properties \cite{FK94,Fa08}.

%The following proposition also holds

\section{Self-dual polyhedral cones}\label{sec:slack}
In this work we will focus on the polyhedral cones that are self-dual according to Definition~\ref{def:self_dual}.
The main tool we will use for our explorations will be \emph{slack matrices}, so we first present a basic discussion on their properties.

\subsection{Slack matrices and basic properties}
%In this subsection, we present some properties of slack matrices of general polyhedral cones. This will be useful in the analysis of self-dual polyhedra later. 
We start with the following definition.
\begin{definition}[Slack matrices]\label{def:slack}
	Let $\stdCone \subseteq \Re^d$ be  a pointed full-dimensional polyhedral cone and let $\stdCone^*$ be the dual cone with respect some inner product.
	Let $n$ and $m$ be the number of extreme rays 
	of $\stdCone$ and $\stdCone^*$, respectively.
	A \emph{slack matrix} of $\stdCone$ is a  matrix  $M \in \Re^{n\times m}$ constructed as follows. Let $\{x_1,\ldots, x_n\}$ and $\{y_1,\ldots, y_m\}$ be %such that the $x_i, y_i$ generate 
	sets of generators for all the extreme rays of  $\stdCone$ and $\stdCone^*$ respectively. Then 
	$M_{ij} = \inProd{x_i}{y_j}$ for $i \in \{1,\ldots, n\}, j \in \{1,\ldots, m\}$.
	
	The set of slack matrices of $\stdCone$ is denoted by 
	$\S(\stdCone)$.
\end{definition}
\begin{remark}\label{rem:def_slack}
The notion of slack matrix is more commonly used for polytopes, and there is some variation on how it is defined, specially in the case of cones. Our definition of slack matrices is slightly more strict than, say, the one in \cite{slack}, where redundancies are allowed.
For example, in \cite{slack} it is possible that a slack matrix has a repeated row or has a zero row. 

Here, however, we insist in Definition~\ref{def:slack} that the $x_i$ and $y_j$ correspond to distinct extreme rays of $\stdCone$ and $\stdCone^*$.  This definition however is the natural generalization of the most common notion of slack matrix for polytopes and is more natural in the context of this paper. It coincides with the definitions used in \cite{wang2020lifts,gouveia2022combining}.
In particular, no two rows of a slack matrix can have the same pattern of zeros since it would imply that different extreme rays of $\stdCone$ are orthogonal to the exact same extreme rays of $\stdCone^*$.
In addition, since $\stdCone$ is assumed to be pointed, a row of zeros would imply that some extreme ray is orthogonal to $\stdCone^*$, but $\stdCone^{*\perp} = (-\stdCone) \cap \stdCone$, which is $\{0\}$ by assumption. 
Analogously, since $\stdCone$ is full-dimensional, no columns of zeros are possible, since it would imply that some extreme ray of $\stdCone^*$ is orthogonal to $\stdCone$.
In summary, the assumptions we make in Definition~\ref{def:slack} will be helpful to avoid trivial cases arising from redundancies in the slack matrix.

\end{remark}
In Definition~\ref{def:slack}, we did not specify the inner product used when constructing slack matrices. We will show in the next proposition that this is not an issue and that linearly isomorphic cones have the same set of slack matrices.

\begin{proposition}[Invariance of slack matrices]\label{prop:slack_inv}
Let $\stdCone \subseteq \Re^d$ be  a pointed full-dimensional polyhedral cone. 	The following items hold.
\begin{enumerate}[$(i)$]
	\item\label{prop:slack_inv:1} $\S(\stdCone)$ does not depend on the choice of inner product in Definition~\ref{def:slack}.
	\item\label{prop:slack_inv:2} If $\stdCone$ is linearly isomorphic to $\stdCone'$ then 
	$\S(\stdCone) = \S(\stdCone')$.
\end{enumerate}
\end{proposition}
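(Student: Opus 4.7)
The plan is to reduce both parts to statements about the Euclidean inner product, leveraging the fact (equation~(2.4)) that changing the inner product to one given by a PD matrix $A$ amounts to replacing $\stdCone^*_{\euc}$ by $A^{-1}\stdCone^*_{\euc}$, and applying $A^{-1}$ sends extreme ray generators of $\stdCone^*_{\euc}$ bijectively to extreme ray generators of $\stdCone^*$.

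For part~\emph{(i)}, I would fix a slack matrix $M$ of $\stdCone$ built from generators $x_1,\dots,x_n$ of the extreme rays of $\stdCone$ and generators $y_1,\dots,y_m$ of the extreme rays of $\stdCone^*$, where $\stdCone^*$ is taken with respect to the inner product induced by a PD matrix $A$, so that $M_{ij}=x_i^{\T}A y_j$. Letting $z_j \coloneqq A y_j$, the vectors $z_1,\dots,z_m$ are generators of the extreme rays of $\stdCone^*_{\euc}$, and $M_{ij}=x_i^{\T} z_j$. Hence $M$ is also a slack matrix constructed with the Euclidean inner product. The reverse inclusion is identical: given any Euclidean slack matrix with generators $z_j$ of $\stdCone^*_{\euc}$, the vectors $y_j \coloneqq A^{-1}z_j$ generate the extreme rays of $\stdCone^*$ and reproduce $M$ via $\inProd{\cdot}{\cdot}$. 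This shows that $\S(\stdCone)$ is independent of the inner product.

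For part~\emph{(ii)}, by \emph{(i)} it suffices to work with the Euclidean inner product throughout. Let $A\colon \spanVec\stdCone\to\spanVec\stdCone'$ be a linear bijection with $A(\stdCone)=\stdCone'$. Since $A$ is a linear isomorphism of cones, it sends extreme rays of $\stdCone$ bijectively to extreme rays of $\stdCone'$; thus if $x_1,\dots,x_n$ are generators of the extreme rays of $\stdCone$, then $x_i' \coloneqq Ax_i$ are generators for those of $\stdCone'$. By equation~(2.3) applied with the Euclidean inner product, $\stdCone'^*_{\euc}=(A^{\T})^{-1}\stdCone^*_{\euc}$, so if $y_1,\dots,y_m$ generate the extreme rays of $\stdCone^*_{\euc}$, then $y_j' \coloneqq (A^{\T})^{-1}y_j$ generate those of $\stdCone'^*_{\euc}$. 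A direct computation
\[
(x_i')^{\T} y_j' = (Ax_i)^{\T}(A^{\T})^{-1}y_j = x_i^{\T} y_j
\]
shows that the resulting slack matrix for $\stdCone'$ coincides with that of $\stdCone$, giving $\S(\stdCone)\subseteq\S(\stdCone')$. Swapping the roles of $\stdCone,\stdCone'$ (with $A^{-1}$) yields the reverse inclusion.

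I do not expect any real obstacle here: the only subtlety is bookkeeping that the chosen generators of the various dual cones truly run over all extreme rays bijectively, which is automatic because $A$, $A^{-1}$, $(A^{\T})^{-1}$ are all linear bijections, and linear bijections between cones preserve the extremal structure. The nontrivial content is simply the observation that the identity $M_{ij} = x_i^{\T}(Ay_j)$ already realises $M$ as a Euclidean slack matrix, which collapses the dependence on the inner product and, combined with the adjoint identity above, on the particular realisation of the cone.
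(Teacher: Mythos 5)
Your proof is correct and follows essentially the same route as the paper's own argument: for part (i) you absorb the PD matrix $A$ into the dual-cone generators via $z_j = Ay_j$ (the paper writes $y_j = A^{-1}\hat y_j$, same change of variables), and for part (ii) you invoke part (i) to pass to the Euclidean inner product and then use $\stdCone'^*_{\euc} = (A^\T)^{-1}\stdCone^*_{\euc}$ to check that the pairings of extreme-ray generators are unchanged. The only cosmetic difference is that the paper also sketches a shorter coordinate-free justification of (i) (viewing $\stdCone^*$ as living in the dual space of linear functionals nonnegative on $\stdCone$), which you omit but do not need.
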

\begin{proof}
\fbox{\ref{prop:slack_inv:1}}
A quick proof of this item can be obtained by observing that 
elements in $\stdCone^*$  correspond to the linear functionals (i.e., elements in the dual space of $\Re^d$) that are nonnegative over $\stdCone$. This shows that there is no dependency on the inner product in Definition~\ref{def:slack} and different inner products merely correspond to selecting different representatives to each linear functional.
 
For completeness, here is also a detailed proof. Let $M \in \S(\stdCone)$ as in Definition~\ref{def:slack} and let $A$ be the symmetric positive definite matrix of the corresponding inner product. In view of \eqref{eq:dual_cone2}, the generators of the extreme rays of $\stdCone^*$ can be chosen to be of the form $\{A^{-1}\hat y_1, \ldots A^{-1}\hat y_m\}$, where the $\hat y_i$ generate the extreme rays of $\stdCone_{\euc}^*$ (the dual cone with respect the Euclidean inner product) and $y_i = A^{-1}\hat y_i$.
Therefore, the entries of $M$ satisfy
\[
M_{ij} = \inProd{x_i}{y_j} = x_i^\T A (A^{-1}\hat y_j) =  x_i ^\T \hat y_j,
\]
which does not depend on the particular choice of the inner product.

\fbox{\ref{prop:slack_inv:2}} Suppose that $ \stdCone = A\stdCone'$ holds for some bijective linear map $A$. From \eqref{eq:dual_iso}
\[
\stdCone_{\euc}^* = A^{-\T}(\stdCone')_{\euc}^*
\]
holds.
Let $M \in \S(\stdCone)$. By item~\ref{prop:slack_inv:1}, we may consider the Euclidean inner product without loss of generality so that we have
\[
M_{ij} = x_i^\T y_j = (A\hat x_i)^\T (A^{-\T}\hat y_j) = \hat x_i ^\T \hat y_j,
\]
where the $\hat x_i$ and $\hat y_j$ generate the extreme rays of $\stdCone'$ and $(\stdCone')_{\euc}^*$.
So, the entries of $M_{ij}$ do not depend on the particular isomorphism considered.
\end{proof}
Next, we need the following technical lemma.

\begin{lemma}\label{lem:iso}
Suppose that $A,U \in \Re^{n\times d}$ and 
$B, V \in \Re^{d\times m}$ are  such that 
$A,B,U,V$ have rank $d$	and 
$
AB = UV
$
holds. Then, there exists a nonsingular  matrix $S \in \Re^{d\times d}$ such that $AS = U$ and $SV = B$.
\end{lemma}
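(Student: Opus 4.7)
The plan is to exploit the full-rank hypotheses on $A, B, U, V$ to identify the column and row spaces, and then construct $S$ by a straightforward cancellation argument.

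First, I would show that $\conec A = \conec U$ as subspaces of $\Re^n$ (viewing columns as spanning a $d$-dimensional subspace) and similarly that the row spaces of $B$ and $V$ coincide in $\Re^m$. Concretely, since $B \in \Re^{d \times m}$ has rank $d$, it has a right inverse $B^+$ with $BB^+ = I_d$; hence $A = (AB)B^+ = UVB^+$, so every column of $A$ lies in the column space of $U$. The reverse inclusion follows identically using a right inverse of $V$. An analogous argument using left inverses of $A$ and $U$ (available since they have full column rank $d$) yields that the row spaces of $B$ and $V$ agree.

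From $\conec U \subseteq \conec A$ and the fact that $A$ has full column rank, there is a \emph{unique} matrix $S \in \Re^{d \times d}$ with $AS = U$; explicitly, $S = A^+ U$ where $A^+$ is any left inverse of $A$. Symmetrically, there is a unique $T \in \Re^{d\times d}$ with $TV = B$. To see that $S = T$, I would substitute both relations into the hypothesis: $AB = A(TV)$ and $UV = (AS)V$, so $ATV = ASV$. Cancelling $A$ on the left (it has full column rank) and $V$ on the right (it has full row rank) yields $T = S$, so this common matrix simultaneously satisfies $AS = U$ and $SV = B$.

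Finally, nonsingularity of $S$ follows from a rank inequality: $d = \rank U = \rank(AS) \leq \rank S \leq d$, forcing $\rank S = d$. I do not anticipate a serious obstacle here; the only subtle point is verifying that the candidate $S$ built from the column-space identification is the \emph{same} matrix as the one built from the row-space identification, which is handled cleanly by the cancellation step above.
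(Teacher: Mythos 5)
Your proof is correct and rests on the same essential idea as the paper's: identify the column spaces of $A$ and $U$ using the rank hypotheses, obtain $S$ with $AS = U$, and then use cancellation (via full column rank of $A$) to force the second relation. The differences are cosmetic. You establish $\textup{col}(A) = \textup{col}(U)$ by composing with one-sided inverses, whereas the paper first shows $AB$ has rank $d$ by exhibiting a nonsingular $d\times d$ submatrix and then squeezes the column space of $A$ between that of $AB$ and $U$; both are fine. You then construct $T$ symmetrically from the row-space equality and show $S = T$ by cancelling $A$ on the left and $V$ on the right of $ATV = ASV$, while the paper simply plugs $AS = U$ into $AB = UV$ to get $A(SV - B) = 0$ and cancels $A$ alone. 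Your symmetric route is slightly more work (it needs the row-space observation and a second cancellation), but it is equally valid, and your explicit rank inequality for the nonsingularity of $S$ is a clean replacement for the paper's implicit change-of-basis argument.
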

\begin{proof}
We start by noting that under the conditions of the lemma, $AB$ has rank $d$.
To see this pick non-singular $d \times d$ submatrices $A_I$ and $B_J$ of $A$ and $B$, obtained by taking the $d$ rows of $A$ indexed by $I$ and the $d$ columns of $B$ indexed by $J$ respectively. Then $A_I B_J$ is non-singular and is the submatrix of $AB$ with columns indexed by $I$ and rows indexed by $J$.

Denote the column space of $A$ by $\textup{col}(A)$.
Since the columns of $AB$ are linear combinations of the columns of $A$, and since their column spaces have both dimension $d$, we must have $\textup{col}(A)=\textup{col}(AB)$, and since $AB=UV$ we also must have $\textup{col}(AB)=\textup{col}(U)$. Therefore the columns of $A$ and the columns of $U$ are both bases to the same vector space, which implies that there is a non-singular change of basis matrix $S$ such that $AS=U$.

Therefore, $ASV = UV = AB$, which leads to 
$A(SV - B) = 0$. Since $A$ has rank $d$, this implies that $SV = B$.
\end{proof}
We now move on to our main result on slack matrices, which is a slightly refined version of \cite[Lemma~5]{slack} more suitable for our purposes. The idea is that a factorization of a slack matrix $M$ of the form $M = UV$ leads to a  linear isomorphism between $\stdCone$ and the cone generated by the rows of $U$. Furthermore, the factorization also identifies the corresponding dual cone.

\begin{theorem}\label{theo:slack}
	Let $\stdCone \subseteq \Re^d$ be  a pointed full-dimensional polyhedral cone and let $M$ be a slack matrix of $\stdCone$.
	Suppose that $M = UV$, where $U \in \Re^{n\times d}$, $V \in \Re^{d\times m}$. Then, the following items holds.
	\begin{enumerate}[$(i)$]
		\item\label{theo:slack:i} $\stdCone$ is linearly isomorphic to $\coner(U)$.
		\item\label{theo:slack:ii} $(\coner(U))^*_{\euc} = \conec(V)$.
	\end{enumerate}
\end{theorem}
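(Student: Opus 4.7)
The plan is to extract from Lemma~\ref{lem:iso} a change-of-basis matrix $S$ that realizes both claims simultaneously. I will first produce a canonical full-rank factorization of $M$ directly from the geometry of $\stdCone$, then compare it to the given factorization $M = UV$ and let the lemma do the rest.

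For the canonical factorization, by Proposition~\ref{prop:slack_inv}\ref{prop:slack_inv:1} I may assume that $M$ is computed using the Euclidean inner product. Let $X \in \Re^{n\times d}$ be the matrix whose rows are $x_i^\T$ for generators $x_1,\dots,x_n$ of the extreme rays of $\stdCone$, and let $Y \in \Re^{d\times m}$ be the matrix whose columns are generators $y_1,\dots,y_m$ of the extreme rays of $\stdCone^*_{\euc}$. Then $M = XY$ by construction. Since $\stdCone$ is full-dimensional its extreme rays span $\Re^d$, so $\rank(X) = d$; since $\stdCone$ is pointed, \eqref{eq:lineality} forces $\stdCone^*_{\euc}$ to be full-dimensional, so $\rank(Y) = d$. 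The same rank argument used at the start of the proof of Lemma~\ref{lem:iso} then shows $\rank(M) = d$, which combined with $M = UV$ and the size constraints on $U$ and $V$ forces $\rank(U) = \rank(V) = d$ as well.

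With all four matrices $X,Y,U,V$ of full rank $d$ and $XY = UV$, Lemma~\ref{lem:iso} yields a nonsingular $S \in \Re^{d\times d}$ with $XS = U$ and $SV = Y$. For \ref{theo:slack:i}, the $i$-th row of $U = XS$ equals $(S^\T x_i)^\T$, so the linear bijection $T\colon \Re^d \to \Re^d$, $T(x) = S^\T x$, sends each generator $x_i$ of $\stdCone$ to the corresponding row of $U$; therefore $T(\stdCone) = \coner(U)$, giving the linear isomorphism. For \ref{theo:slack:ii}, the identity $V = S^{-1}Y$ shows that $\conec(V)$ is generated by the vectors $\{S^{-1}y_j\}$. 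On the other hand, applying \eqref{eq:dual_iso} to $T$ (whose Euclidean adjoint is $S$) gives $(\coner(U))^*_{\euc} = S^{-1}\stdCone^*_{\euc}$, and the right-hand side is precisely the cone generated by $\{S^{-1}y_j\}$, which matches $\conec(V)$.

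The only non-mechanical obstacle is the rank bookkeeping: one has to recognize that the geometric hypotheses (pointed and full-dimensional) ensure $\rank(M) = d$, and then propagate this to certify that $U$ and $V$ satisfy the rank hypotheses of Lemma~\ref{lem:iso}. Once $S$ has been produced, both conclusions are simply a reading of what $S$ and $S^{-1}$ do, respectively, to the generators of $\stdCone$ and $\stdCone^*_{\euc}$.
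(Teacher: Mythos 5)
Your proof is correct and follows essentially the same route as the paper: produce the canonical factorization $M=XY$ from the geometry, apply Lemma~\ref{lem:iso} to compare it with $M=UV$, and read off both conclusions from the resulting change-of-basis matrix $S$ together with \eqref{eq:dual_iso}. The one point where you are more explicit than the paper is in verifying the rank-$d$ hypotheses of Lemma~\ref{lem:iso} for $U$ and $V$ (via $\rank M = d$), which the paper leaves implicit; this is a small but welcome piece of bookkeeping.
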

\begin{proof}
For simplicity, let $\stdCone' \coloneqq \coner(U)$ and $\hat \stdCone \coloneqq \conec(V)$.

Since $M$ is a slack matrix of $\stdCone$, there exists $A \in \Re^{n\times d}, B \in \Re^{d\times m}$, such that $AB = M$ and 
the rows of $A$ correspond to the extreme rays of $\stdCone$  and the columns of $B$ correspond to the extreme rays of $\stdCone^*_{\euc}$.

Since $AB = UV$, by Lemma~\ref{lem:iso} there exists a nonsingular $S \in \Re^{d\times d}$ such that  $AS = U$ and $SV = B$. This implies that $S^\T \stdCone  = \stdCone'$ which proves item~\ref{theo:slack:i} and, using \eqref{eq:dual_iso}, also  leads to 
\begin{equation}\label{eq:theo1a}
\stdCone'^*_{\euc} = S^{-1}\stdCone^*_{\euc}.
\end{equation}
Since $SV = B$, we have that $S$ maps the cone generated by the columns of $V$ (i.e., $\hat \stdCone$) to the cone generated by the columns of $B$ (i.e., $\stdCone_{\euc}^*$). That is,  
\begin{equation}\label{eq:theo1b}
S\hat \stdCone = \stdCone_{\euc}^*.
\end{equation}
From \eqref{eq:theo1a} and \eqref{eq:theo1b}, we conclude that $\stdCone'^*_{\euc} = \hat \stdCone$, which concludes the proof of item~\ref{theo:slack:ii}.
\end{proof}

\subsection{Self-dual cones and PSD slacks}
We now present a characterization of self-dual polyhedral cones: they are exactly the polyhedral cones that have a PSD slack. Furthermore, if at least one slack is PSD,  all of them can be made PSD by exchanging rows and rescaling columns if necessary.
\begin{theorem}\label{thm:self_psd}
Let $\stdCone \subseteq \Re^d$ be  a pointed full-dimensional polyhedral cone.
The following conditions are equivalent.
\begin{enumerate}[$(i)$]
	\item\label{thm:self_psd:1} There exists an inner product under which $\stdCone = \stdCone^*$.
	\item\label{thm:self_psd:2} $\stdCone$ is linearly isomorphic to a cone $\stdCone' \subseteq \Re^d$ satisfying 
	$\stdCone'^*_{\euc} = \stdCone'$.
	\item\label{thm:self_psd:3} Every slack matrix $M \in \S(\stdCone)$ is either PSD or there exists a permutation matrix $P$ and a positive diagonal matrix $D$ such that $PMD$ is PSD.
	\item\label{thm:self_psd:4} There exists $M \in \S(\stdCone)$ that is PSD.
\end{enumerate}
\end{theorem}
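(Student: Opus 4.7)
The plan is to establish the chain (i) $\Leftrightarrow$ (ii), (ii) $\Rightarrow$ (iii), (iv) $\Rightarrow$ (ii), with (iii) $\Rightarrow$ (iv) immediate from the statement.

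For (i) $\Leftrightarrow$ (ii), I invoke Proposition~\ref{prop:self_dual}, which translates (i) into the existence of a positive definite $A$ with $A\stdCone = \stdCone^*_{\euc}$. Writing $A = B^2$ for positive definite $B$ and setting $\stdCone' = B\stdCone$, a short application of \eqref{eq:dual_iso} yields $(\stdCone')^*_{\euc} = B^{-1}\stdCone^*_{\euc} = B^{-1}A\stdCone = B\stdCone = \stdCone'$, so (ii) holds. The reverse is symmetric: if $\stdCone' = T\stdCone$ satisfies $(\stdCone')^*_{\euc} = \stdCone'$, then \eqref{eq:dual_iso} gives $T^\T T \stdCone = \stdCone^*_{\euc}$, and Proposition~\ref{prop:self_dual} applies since $T^\T T$ is positive definite.

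For (iv) $\Rightarrow$ (ii), I apply Theorem~\ref{theo:slack}. A PSD slack matrix $M$ is automatically square (forcing $n=m$), and since $\stdCone$ is pointed and full-dimensional its slack matrices have rank $d$, so $M$ admits a factorization $M = UU^\T$ with $U \in \Re^{n\times d}$ of rank $d$. Applying Theorem~\ref{theo:slack} with the factorization $M = U \cdot U^\T$ shows that $\stdCone$ is linearly isomorphic to $\coner(U)$ and that $(\coner(U))^*_{\euc} = \conec(U^\T) = \coner(U)$, which is precisely (ii).

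The heart of the argument is (ii) $\Rightarrow$ (iii). By Proposition~\ref{prop:slack_inv} I may work with $\stdCone'$ satisfying $(\stdCone')^*_{\euc} = \stdCone'$, and fix generators $x_1, \ldots, x_n$ for its extreme rays; the same set also generates the extreme rays of the Euclidean dual. Any slack matrix then has the form $M_{ij} = \lambda_i \mu_j N_{\pi(i), \rho(j)}$ for positive scalars $\lambda_i, \mu_j$, permutations $\pi, \rho$, and the Gram matrix $N_{ij} = x_i^\T x_j$, which is PSD. To exhibit $P$ and $D$, I choose $P$ to be the row permutation corresponding to $\tau = \pi^{-1}\rho$ and set $D_{jj}$ proportional to $\lambda_{\tau(j)}/\mu_j$; a direct calculation then gives $PMD = E Q^\T N Q E$ for a positive diagonal matrix $E$ and a permutation $Q$ associated with $\rho$, exhibiting $PMD$ as a congruence of $N$ and hence PSD. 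The main obstacle is exactly this bookkeeping step: one must pick $P$ and $D$ to simultaneously symmetrize $M$ and realize it as a congruence of $N$, and the nontrivial point is that a single row permutation plus a column rescaling is enough, even though $M$ depends on the two independent permutations $\pi$ and $\rho$.
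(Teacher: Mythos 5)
Your proof is correct and follows the same overall architecture as the paper: the same chain (i)$\Leftrightarrow$(ii), (ii)$\Rightarrow$(iii)$\Rightarrow$(iv)$\Rightarrow$(ii), the same use of Proposition~\ref{prop:self_dual} and \eqref{eq:dual_iso} for (i)$\Leftrightarrow$(ii), and the same use of Theorem~\ref{theo:slack} and the rank-$d$ property of slacks for (iv)$\Rightarrow$(ii). The one place where your route genuinely diverges is (ii)$\Rightarrow$(iii). The paper starts from the factored form $M = XY$ with the rows of $X$ generating the extreme rays of $\stdCone'$ and the columns of $Y$ those of $\stdCone'^*_\euc = \stdCone'$; since these two generator sets span the same rays, a single permutation $P$ and positive diagonal $D$ satisfying $(PX)^\T = YD$ exist directly, and then $PMD = (PX)(PX)^\T$ is manifestly PSD. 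You instead fix a canonical Gram matrix $N$ of reference generators, write $M_{ij}=\lambda_i\mu_j N_{\pi(i),\rho(j)}$ with two independent permutations and two scaling vectors, and then solve for $P$ and $D$ by matching scaled permutation matrices ($\tau=\pi^{-1}\rho$, $D_{jj}\propto\lambda_{\tau(j)}/\mu_j$). Both arguments exhibit $PMD$ as a congruence of a Gram matrix; the paper's parameterization makes the "one permutation suffices" phenomenon you flag automatic, whereas your version requires the explicit cancellation $\pi^{-1}\rho$, so the paper's route is slightly lighter on bookkeeping while yours makes the underlying symmetry more explicit.
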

\begin{proof}
\fbox{\ref{thm:self_psd:1}$\Rightarrow$\ref{thm:self_psd:2}}
 By Proposition~\ref{prop:self_dual}, there exists a positive definite matrix $A$ such that $A\stdCone = \stdCone^*_{\euc}$.
 Since $A$ is positive definite, there exists a positive definite $B$ such that $B^2 = A$.
 Let $\stdCone' \coloneqq B \stdCone$.
 Then, from \eqref{eq:dual_iso} we obtain $(\stdCone')^*_{\euc} = B^{-1}(\stdCone^*_{\euc}) = B^{-1}(B^2\stdCone) = \stdCone'$.
 
\noindent\fbox{\ref{thm:self_psd:2}$\Rightarrow$\ref{thm:self_psd:1}} By assumption, there exists 
$B$ such that $B \stdCone = \stdCone'$ and $(\stdCone')^*_{\euc} = \stdCone'$.
Therefore, from \eqref{eq:dual_iso}, we obtain $B^{-\T}(\stdCone_{\euc}^*) =  (\stdCone')_{\euc}^* = \stdCone' = B\stdCone$. That is, 
$\stdCone_{\euc}^* = B^{\T}B\stdCone$, so by Proposition~\ref{prop:self_dual}, there exists an inner product under which $\stdCone = \stdCone^*$. 
 
\noindent\fbox{\ref{thm:self_psd:2}$\Rightarrow$\ref{thm:self_psd:3}} 
By Proposition~\ref{prop:slack_inv}, $\S(\stdCone) = \S(\stdCone')$ and we can consider that the slack matrices are constructed with, say, the Euclidean inner product.  Let $n$ be the number of extreme rays of $\stdCone'$. So, 
\begin{equation}\label{eq:mxy}
M = XY,
\end{equation}
where $X \in \Re^{n\times d}, Y \in \Re^{d\times n}$, the rows of $X$ generate the extreme rays of $\stdCone'$ and the columns of $Y$ generate the extreme rays of ${\stdCone'}^*_{\euc}$. 

Since ${\stdCone'}^*_{\euc}= \stdCone'$, we can permute the rows of $X$ such that the $i$-th row of $X$ generate the same extreme ray of $\stdCone'$ as the $i$-th column of $Y$. That is, there exists a permutation matrix $P$ and a diagonal matrix $D$ with positive entries, such that 
\begin{equation}\label{eq:pxdy}
(PX)^\T = YD.
\end{equation}
Then, \eqref{eq:mxy} and \eqref{eq:pxdy} imply that 
$PM = (PX)  (PX)^\T D^{-1}$.
Therefore, $PMD$, which is obtained from $M$ through exchanging rows and rescaling columns, is a symmetric positive semidefinite matrix.

\noindent\fbox{\ref{thm:self_psd:3}$\Rightarrow$\ref{thm:self_psd:4}} This follows because  exchanging rows and rescaling columns of any $M \in \S(\stdCone)$ still leads to a matrix belonging to $\S(\stdCone)$.

\noindent\fbox{\ref{thm:self_psd:4}$\Rightarrow$\ref{thm:self_psd:2}} Let $n$ be the number of extreme rays of $\stdCone$ and let $M \in \S(\stdCone)$ be PSD. Since $M$ is a slack matrix of a pointed $d$-dimensional cone, $M$ has rank $d$, see \cite[Lemma~13]{slack}. Therefore, there exists a rank $d$ matrix $X \in \Re^{n\times d}$ such that $M = XX^\T$.
Let $\stdCone' = \coner (X)$, i.e., the cone in $\Re^{d}$ generated by the rows of $X$.
By Theorem~\ref{theo:slack}, 
$\stdCone$ is linearly isomorphic to $\stdCone'$ and 
${\stdCone'}^*_{\euc} = \stdCone'$.
\end{proof}

\subsection{Slices of polyhedral cones and negatively self-polar polytopes}\label{sec:polytope}

Until now we have focused our attention on cones, with particular emphasis on polyhedral cones. However, our results have \textquotedblleft dehomogenized\textquotedblright \ versions that apply to convex bodies. In particular, we can adapt them to polytopes. The right version of self-duality we need in order to do this translation is \emph{negative self-polarity}.

Let $\ambSpace$ be a finite-dimensional Euclidean space equipped with some inner product $\inProd{\cdot}{\cdot}$.
Let $C \subseteq \ambSpace$ be a full-dimensional compact convex set such that $0 \in \reInt C$, where $\reInt C$ indicates the relative interior of $C$. Then, the polar $C^\circ$ is the set 
\begin{equation}\label{eq:polar}
C^\circ \coloneqq \{y \in \ambSpace \mid \inProd{x}{y} \leq 1, \forall x \in C \}
\end{equation}
and $C^\circ$ is also a  full-dimensional compact convex such that $0 \in \reInt C^\circ$ and $C^{\circ \circ} = C$, e.g., \cite[Corollary~14.5.1]{RT97}.
Analogously, we say that $C$ is \emph{negatively self-polar} if there exists an inner product under which $C = - C^\circ$.

Negatively self-polar polytopes are a very interesting class of polytopes, with deep connections to several areas of mathematics. Their study goes back to seminal works of Lov\' asz \cite{lovasz1983self} and Gr\" unbaum \cite{grunbaum1988selfduality} and are an integral part of a more general body of work on self-duality of polytopes. In \cite{MR4239237} one can find a modern survey on the area from this perspective, that revisits many of the results from \cite{BF76}.
In order to translate our previous results, one needs to adapt some of the definitions to this context.
Let $P\subset \Re^d$ be a full-dimensional polytope with extreme points $\{x_1,\ldots, x_n\}$. The facets of $P$ are cut out by linear inequalities $l_j(x)=b_j-\inProd{x}{y_j}$ for some $b_i \in \RR$ and
$y_j \in \RR^d$. Then, analogously to Definition~\ref{def:slack}, we can define a slack matrix of $P$ as the matrix $S(P)$ such that 
\[
M_{ij} = b_j - \inProd{x_i}{y_j}
\]
and define $\S(P)$ as the set of slack matrices of $P$. We note that  if we fix $M \in \S(P)$, every other matrix $\S(P)$ is obtained by permuting the rows or the columns of $M$, and scaling the columns by positive scalars. It can be seen that $\S(P)$ uniquely defines the polytope $P$ up to affine transformations (see \cite{MR4002715} for more on this topic). Note that if the origin is in the interior of $P$, we might choose the $b_j$ to be $1$ %\todo{} 
and $y_j$ to be the extreme points of the polar polytope $P^{\circ}$.

Next, suppose that $\ambSpace \subseteq \Re^{d+1}$ such that 
$\dim \ambSpace = d$ and let $w \in \ambSpace^\perp$ with $\inProd{w}{w} = 1$. Let $P \subseteq \ambSpace$ be a $d$-dimensional polytope with zero on the interior, with  extreme points $\{x_1,\ldots, x_n\}$. 
Let $\stdCone \subseteq \Re^{d+1}$ be the closed convex cone generated by $\{w\} + P$, that is,
\[
\stdCone \coloneqq \{\alpha w + \alpha x \mid x \in P, \alpha \geq 0 \}.
\]
 Then $\stdCone^*$ is the closed convex cone generated by $\{w\} -{P^\circ}$, where $P^\circ$ is computed with respect to $\mathcal{V}$ as in \eqref{eq:polar}\footnote{Here are a few details. Let $z \in \stdCone^*$ and $\pi$ denote the orthogonal projection onto $\{w\}^\perp$, so that $z = \pi(z) + \inProd{z}{w}w$ and $\inProd{z}{w} \geq 0$ (since $w \in \stdCone^*$).  From $z \in \stdCone^*$, it can be verified that $-\pi(z)/\inProd{z}{w} \in P^\circ$ if $\inProd{z}{w} > 0$. If $\inProd{z}{w} = 0$, then the assumption that $P$ is full-dimensional in $\mathcal{V}$ and $0 \in \reInt P$ implies that  $\pi(z) = 0$, so that $z = 0$. }.
 In particular, if $P$ is negatively self-polar, with respect to the inner product of $\Re^{d+1}$ restricted to $\ambSpace$, then $\stdCone$ is self-dual. Furthermore, since the extreme rays of $\stdCone$ and $\stdCone^*$  are generated by the  $w+x_i$ and $w-y_j$, respectively, where  $\{y_1,\ldots, y_m\}$ are the extreme points of $P^{\circ}$, we have 
 \begin{equation}\label{eq:sp_sk}
  \S(P) \subseteq \S(\stdCone).
 \end{equation}
Another important concept when dealing with polytopes is that of \emph{projective transformations}. Those are maps $\varphi: \RR^d \rightarrow \RR^d$ defined by
$$x \mapsto \frac{Ax+b}{a^\T x+\beta}$$ where
$A \in \RR^{d \times d}$, $a,b \in \RR^d$ and $\beta \in \RR$ and $$B\coloneqq \left(\begin{array}{cc} A & b \\ a^\T & \beta \end{array}\right)$$
is nonsingular. Another way of thinking of the map is simply the composition of the natural inclusion $\RR^d \rightarrow \{1\} \times \RR^d$, with the linear isomorphism $x \mapsto Bx$ and then with the perspective transformation $\RR^{d+1} \rightarrow \RR^d$ that takes $(x_0,x)$ to
$\frac{x}{x_0}$. When applied to a polytope $P$, the image $\varphi(P)$ is therefore obtained by applying $B$ to the cone $\stdCone$ generated by $\{1\} \times P$ and then slicing $B(\stdCone)$
with the plane defined by setting the first coordinate to $1$. We will say that two polytopes $P$ and $Q$ are \emph{projectively equivalent} if there exists a projective transformation $\varphi$ such that $\varphi(P)=Q$.

Under this setting, it turns out that %It is not difficult to see that 
if $P$ and $Q$ are projectively equivalent then $S(P)$ and $S(Q)$ are the same up to scaling rows by positive scalars, since by the above interpretation of $\varphi$ they are slack matrices of $\stdCone$ and $B(\stdCone)$ which have the same set of slack matrices by item~\ref{prop:slack_inv:2} of Proposition~\ref{prop:slack_inv}.

This gives us a tool to translate Theorem \ref{thm:self_psd} to the language of polytopes.

\begin{theorem}\label{thm:polytope_psd}
Let $P \subseteq \Re^d$ be a full-dimensional polytope with $0$ in its relative interior. 
Then, the following items are equivalent.
\begin{enumerate}[$(i)$]
	\item\label{thm:polytope_psd:1} $P$ is projectively equivalent to a polytope that is negatively self-polar with respect to some inner product.
    \item\label{thm:polytope_psd:4} $P$ is projectively equivalent to a polytope that is negatively self-polar with respect to the Euclidean inner product.
%	\item\label{thm:polytope_psd:3} Every slack matrix $M \in \S(P)$ is either PSD or can be made PSD by exchanging rows.
	\item\label{thm:polytope_psd:2} $P$ has a PSD slack.
\end{enumerate}
\end{theorem}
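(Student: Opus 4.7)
The plan is to pass from $P$ to its homogenization cone $\stdCone_P \subseteq \Re^{d+1}$ (the cone generated by $\{1\}\times P$) and then invoke Theorem~\ref{thm:self_psd}. I would prove the cycle \ref{thm:polytope_psd:4}$\Rightarrow$\ref{thm:polytope_psd:1}$\Rightarrow$\ref{thm:polytope_psd:2}$\Rightarrow$\ref{thm:polytope_psd:4}; the first implication is immediate because the Euclidean inner product is a special case of ``some inner product.'' The main preliminary reduction is the equivalence between condition~\ref{thm:polytope_psd:2} (``$P$ has a PSD slack'') and ``$\stdCone_P$ has a PSD slack.'' One direction follows from $\S(P) \subseteq \S(\stdCone_P)$ in~\eqref{eq:sp_sk}; the other follows from Theorem~\ref{thm:self_psd}\ref{thm:self_psd:3} together with the remark (noted just before the theorem) that row permutations and positive column scalings preserve membership in $\S(P)$.

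For \ref{thm:polytope_psd:1}$\Rightarrow$\ref{thm:polytope_psd:2}, suppose $P$ is projectively equivalent to some $P'$ that is negatively self-polar under some inner product. The cone interpretation of projective transformations in the excerpt gives a linear isomorphism $\stdCone_P \cong \stdCone_{P'}$, so $\S(\stdCone_P) = \S(\stdCone_{P'})$ by Proposition~\ref{prop:slack_inv}\ref{prop:slack_inv:2}. The construction around~\eqref{eq:sp_sk} shows that $\stdCone_{P'}$ is self-dual under the extension of $P'$'s inner product to $\Re^{d+1}$, so Theorem~\ref{thm:self_psd} supplies a PSD slack of $\stdCone_{P'}$, hence of $\stdCone_P$, and, by the reduction above, of $P$.

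For \ref{thm:polytope_psd:2}$\Rightarrow$\ref{thm:polytope_psd:4} -- the technical heart of the proof -- take a PSD slack $M \in \S(P)$ and factor $M = XX^\T$ with $X \in \Re^{n\times(d+1)}$ of rank $d+1$ (slack matrices of $d$-dimensional polytopes have rank $d+1$). By Theorem~\ref{theo:slack}, $\stdCone' \coloneqq \coner(X)$ is linearly isomorphic to $\stdCone_P$ and satisfies $(\stdCone')^*_{\euc} = \stdCone'$. Pick $v \in \interior \stdCone'$; since $v \in \interior(\stdCone')^*_{\euc}$ too, the hyperplane $H = \{x \mid \inProd{x}{v} = \|v\|^2\}$ meets $\stdCone'$ in a polytope, and translating that slice by $-v$ gives a polytope $Q' \subseteq v^\perp$ with $0 \in \interior Q'$, characterized by $y \in Q' \Leftrightarrow v + y \in \stdCone'$. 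The key calculation uses the identity $\inProd{v+y}{v+z} = \|v\|^2 + \inProd{y}{z}$ (valid because $y, z \perp v$) together with $\stdCone' = (\stdCone')^*_{\euc}$ to deduce that $z \in Q' \Leftrightarrow \inProd{y}{z} \geq -\|v\|^2$ for every $y \in Q'$, which is precisely $Q' = -\|v\|^2 (Q')^\circ$ in $v^\perp$. Rescaling, $R \coloneqq Q'/\|v\|$ satisfies $R = -R^\circ$, so $R$ is negatively self-polar under the Euclidean inner product on $v^\perp$. Finally, $R$ is projectively equivalent to $P$: up to an affine identification of $v^\perp$ with $\Re^d$, it is an affine rescaling of a slice of the linearly isomorphic copy $\stdCone'$ of $\stdCone_P$ by an affine hyperplane missing the origin, which via the cone-slice interpretation of projective transformations in the excerpt is a projective image of $P$.

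The main obstacle I expect is the slicing-and-rescaling step: choosing the correct normalization (the $1/\|v\|$) so that Euclidean polarity inside $v^\perp$ mirrors precisely the self-duality of $\stdCone'$ in $\Re^{d+1}$, and confirming that an arbitrary positive affine slice of a linear image of $\stdCone_P$ indeed qualifies as a projective image of $P$ in the excerpt's sense (which essentially amounts to producing the required change-of-variables linear map on $\Re^{d+1}$). The remaining steps are, by comparison, routine bookkeeping between the cone and polytope slack pictures.
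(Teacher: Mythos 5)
Your proposal is correct and follows essentially the same route as the paper: homogenize $P$ to the cone $\stdCone_P$ over $\{1\}\times P$, apply Theorem~\ref{thm:self_psd}, and translate back to polytopes by slicing. The differences are cosmetic. For \ref{thm:polytope_psd:1}$\Rightarrow$\ref{thm:polytope_psd:2}, the paper skips the cone detour and directly exhibits the PSD slack of the negatively self-polar $Q$ as $[1\ X]\,\bigl(\begin{smallmatrix}1&0\\0&A\end{smallmatrix}\bigr)\,[1\ X]^\T$ (where $X$ stacks the vertices and $A$ is the inner product matrix), then uses the row-scaling relation between slacks of projectively equivalent polytopes to pass to $P$; your route through $\S(\stdCone_P)=\S(\stdCone_{P'})$ and the ``$P$ has PSD slack $\Leftrightarrow$ $\stdCone_P$ has PSD slack'' reduction is equivalent and arguably cleaner for unifying the three implications. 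For \ref{thm:polytope_psd:2}$\Rightarrow$\ref{thm:polytope_psd:4}, the paper addresses the normalization concern you flag by composing the isomorphism from Theorem~\ref{thm:self_psd} with an orthogonal map so that $(1,0,\dots,0)$ lands in $\interior\stdCone'$ (orthogonal maps preserve Euclidean self-duality), then slices at $x_0=1$; this is exactly your choice $v$ with $\|v\|=1$, so no rescaling is required and the projective-equivalence bookkeeping reduces to the standard cone-slice interpretation already set up in the paper. Your version with a generic $v$ and the $1/\|v\|$ rescaling is equally valid and your polarity computation in $v^\perp$ is correct; it is in fact more explicit than the paper's, which simply asserts that the slice ``will be negatively self-polar.''
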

\begin{proof}

\fbox{\ref{thm:polytope_psd:4}$\Rightarrow$\ref{thm:polytope_psd:1}} This implication is straightforward.

\fbox{\ref{thm:polytope_psd:1}$\Rightarrow$\ref{thm:polytope_psd:2}}
If $Q$ is a negatively self-polar polytope, then letting $X$ be the matrix that has the extreme points of $Q$ in its rows and $A$ be the matrix of the underlying inner product, $M=[1\ X] \left(\begin{array}{cc} 1 & 0 \\ 0 &A \end{array}\right) [1\ X]^\T$ is a PSD slack for $Q$. 
If $Q$ is projectively equivalent to $P$, there is some positive diagonal matrix $D$ such that $DM \in \S(P)$. But then $DMD \in \S(P)$ is a PSD matrix.

\fbox{\ref{thm:polytope_psd:2}$\Rightarrow$\ref{thm:polytope_psd:4}} Suppose $M$ is a PSD slack for $P$. Then, $M$ is also a PSD slack for the cone $\stdCone$ in $\Re^{d+1}$ generated by $1\times P$, see \eqref{eq:sp_sk}.
By Theorem~\ref{thm:self_psd}, there exists a linear isomorphism $B$ sending $\stdCone$ to a cone $\stdCone'$ that is self-dual with respect to the Euclidean inner product. By composing $B$ with an orthogonal transformation we may assume that $\{1\} \times \{0\}$ is in the interior of $\stdCone'$, which means that the slice of $\stdCone'$ by the plane where the first coordinate is one is a compact polytope $Q$, and will be negatively self-polar with respect to the Euclidean product after projecting onto $\Re^d$ along the last $d$ coordinates. Moreover, as observed before, taking the cone over $P$, applying a linear transformation and then slicing with this plane is the same as making a projective transformation, so $Q$ is projectively equivalent to $P$.
\end{proof}

\section{Extreme rays of the DNN cone and completely positive semidefinite matrices}\label{sec:dnn_cs}
From Theorems~\ref{thm:self_psd} and \ref{thm:polytope_psd}, the slack matrices of self-dual polyhedral cones and negatively self-polar polytopes are either PSD or can be made 
PSD after exchanging rows and rescaling columns. They are also nonnegative so they are in fact doubly nonnegative matrices. 
In this section, we explore the connections between PSD slacks, doubly nonnegative and completely positive semidefinite matrices.

\subsection{Irreducible self-dual polyhedra and extreme rays of the DNN cone}
First, we will prove that PSD slacks of irreducible self-dual polyhedral cones are extreme rays of the doubly nonnegative cone.
We need a few preliminary results and discussions.
\paragraph{Irreducible cones and slack matrices}
We recall that a closed convex cone $\stdCone \subseteq \Re^d$ is said to be \emph{irreducible} (e.g., \cite{GT14}) if it is not possible to write $\stdCone$ as $\stdCone_1+\stdCone_2$, where $\stdCone_1$ and $\stdCone_2$ are nonzero closed convex cones satisfying $\spanVec(\stdCone_1) \cap \spanVec(\stdCone_2) = \{0\}$.
Otherwise, $\stdCone$ is said to be \emph{reducible} and we write $\stdCone = \stdCone_1 \oplus \stdCone_2$.
Irreducible cones are also called \emph{indecomposable}, see
 \cite{LS75,BF76}.
 
Irreducibility is a concept that does not depend on the underlying inner product. 
Accordingly, if we fix some inner product, the cones $\stdCone_1$ and $\stdCone_2$ are not necessarily orthogonal. However, the situation gets considerably simpler when the cones involved are polyhedral and self-dual, see also \cite[Section~3]{Ba78} for a related discussion.

\begin{lemma}\label{lem:self_orthogonal}
Suppose $\stdCone \subseteq \Re^d$ is a self-dual polyhedral cone under some inner product and that $\stdCone = \stdCone_1 \oplus \stdCone_2$. Then, 
$\stdCone_1 = \stdCone \cap \stdCone_2^\perp$. In particular, 
$\stdCone_1$ and $\stdCone_2$ are orthogonal under the same inner product.
\end{lemma}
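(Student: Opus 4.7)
The plan is to exploit the facet--extreme-ray duality of a self-dual polyhedral cone together with the product structure of faces under a direct sum, and then deduce the set equality from the orthogonality of $\stdCone_1$ and $\stdCone_2$.

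First I would verify that every face $F$ of $\stdCone$ decomposes as $F = F_1 + F_2$, where $F_i \coloneqq F \cap \stdCone_i$ is a face of $\stdCone_i$. Writing any $z \in F$ uniquely as $z_1 + z_2$ with $z_i \in \stdCone_i$ (uniqueness from $\spanVec \stdCone_1 \cap \spanVec \stdCone_2 = \{0\}$), the defining property of a face applied to $z_1 + z_2 \in F$ forces both $z_1$ and $z_2$ to lie in $F$, giving the decomposition. Because the spans of $\stdCone_1$ and $\stdCone_2$ meet trivially, $\dim F = \dim F_1 + \dim F_2$, so the facets of $\stdCone$ are exactly the sets of the form $\stdCone_1 + G_2$ with $G_2$ a facet of $\stdCone_2$, or $G_1 + \stdCone_2$ with $G_1$ a facet of $\stdCone_1$. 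I would also note that each $\stdCone_i$ is itself pointed, as a subcone of the pointed (since self-dual and full-dimensional) cone $\stdCone$.

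Next, I would use self-duality to pair extreme rays of $\stdCone$ with its facets: every extreme-ray generator $x$ of $\stdCone$ is also an extreme ray of $\stdCone^* = \stdCone$, and hence exposes a facet $F_x = \{y \in \stdCone : \inProd{y}{x} = 0\}$ of $\stdCone$. Suppose $x$ generates an extreme ray of $\stdCone_1$. If $F_x$ had the form $\stdCone_1 + G_2$, then $x \in F_x$ would force $\inProd{x}{x} = 0$, contradicting that $\inProd{\cdot}{\cdot}$ is positive definite. Hence $F_x$ must have the form $G_1 + \stdCone_2$, which contains $\stdCone_2$, so $x \perp \stdCone_2$. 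The symmetric argument shows every extreme ray of $\stdCone_2$ is orthogonal to $\stdCone_1$, and since each $\stdCone_i$ is the conic hull of its extreme rays we conclude $\stdCone_1 \perp \stdCone_2$.

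The inclusion $\stdCone_1 \subseteq \stdCone \cap \stdCone_2^\perp$ is then immediate. For the reverse, take $z \in \stdCone \cap \stdCone_2^\perp$ and decompose $z = z_1 + z_2$ with $z_i \in \stdCone_i$; pairing with $z_2$ and using $z_1 \perp z_2$ yields $0 = \inProd{z}{z_2} = \inProd{z_2}{z_2}$, forcing $z_2 = 0$ and hence $z = z_1 \in \stdCone_1$. The step I expect to be the main obstacle is pinning down the ``rectangular'' shape of facets in the first paragraph; once that is in hand, the self-duality argument in the second paragraph delivers the orthogonality almost for free through the facet--extreme-ray correspondence.
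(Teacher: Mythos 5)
Your proof is correct, but it takes a genuinely different route from the paper's. The paper argues in the opposite order and more directly: it first observes that self-duality forces $\stdCone$ to be pointed (via $\stdCone \cap -\stdCone = \stdCone^\perp$), so $\stdCone_1,\stdCone_2$ are faces; then, for $x \in \stdCone \cap \stdCone_2^\perp$ written as $x = k_1 + k_2$, the identity $0 = \inProd{x}{k_2} = \inProd{k_1}{k_2} + \norm{k_2}^2$ together with $\inProd{k_1}{k_2}\ge 0$ (both $k_1,k_2\in\stdCone=\stdCone^*$) immediately gives $k_2 = 0$, hence $\stdCone\cap\stdCone_2^\perp\subseteq\stdCone_1$; equality then follows from a dimension count, $\dim(\stdCone\cap\stdCone_2^\perp)=d-d_2=d_1$, since both sets are faces of the polyhedral cone $\stdCone$. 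In contrast, you first prove orthogonality of the summands by analysing the face lattice — establishing that every facet of a direct sum is ``rectangular'' ($\stdCone_1 + G_2$ or $G_1 + \stdCone_2$) and then invoking the facet--extreme-ray correspondence on $\stdCone=\stdCone^*$ to pin an extreme ray of $\stdCone_1$ to a facet of the second type — and only then derive $\stdCone_1 = \stdCone \cap \stdCone_2^\perp$ as a corollary. Both arguments are sound; the paper's is shorter and needs nothing beyond the definition of self-duality and a dimension formula for polyhedral faces, whereas yours is combinatorially more elaborate (it uses the bijection between extreme rays of $\stdCone^*$ and facets of $\stdCone$, and the product structure of the face lattice) but gives a sharper structural picture: self-duality rigidifies the facet lattice of a direct sum, and orthogonality is forced purely by that lattice structure and positive definiteness of the inner product. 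One cosmetic point: full-dimensionality need not be assumed separately in your parenthetical — $\stdCone=\stdCone^*$ already gives $\stdCone^\perp=\stdCone\cap-\stdCone$, which must be $\{0\}$ since its elements are self-orthogonal members of $\stdCone$, so self-duality by itself implies both pointedness and full-dimensionality.
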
  
\begin{proof}
%For a general cone, the inclusion  $\stdCone^\perp \subseteq \stdCone^*$ always holds, 
%so, 
In view of \eqref{eq:lineality} and the self-duality of $\stdCone$, we have $\stdCone \cap -\stdCone = \stdCone^\perp$. So if $x \in \stdCone^\perp$, then $x \in \stdCone$ which forces $x$ to be zero. Therefore, $\stdCone \cap - \stdCone = \{0\}$ and $\stdCone$ must be pointed. With that, $\stdCone_1$ and $\stdCone_2$ are, in fact, faces of $\stdCone$.
Let $x \in \stdCone \cap \stdCone_2^\perp$ and $d_1 \coloneqq \dim\stdCone_1$ and $d_2 \coloneqq \dim \stdCone_2$, so that $d_1 + d_2 = d$ holds. Then, since 
$\stdCone = \stdCone_1 \oplus \stdCone_2$, we have $x = k_1 + k_2$, with $k_1 \in \stdCone_1$ and $k_2 \in \stdCone_2$. Since $x \in \stdCone_2^\perp$, we have $\inProd{x}{k_2} = 0$. Then, because 
$\stdCone$ is self-dual, $\inProd{k_1}{k_2} \geq 0$ holds, so  we have $k_2 = 0$.
Therefore, $\stdCone \cap \stdCone_2^\perp \subseteq \stdCone_1$.
Because $\stdCone$ is polyhedral, 
$\dim (\stdCone \cap \stdCone_2^\perp) = d - d_2 = d_1$ holds (e.g., \cite[Theorem~3]{Ta76}). Since 
$\stdCone_1$ and $\stdCone \cap \stdCone_2^\perp$ are both faces of $\stdCone$, this implies that $\stdCone_1 = \stdCone \cap \stdCone_2^\perp$.
\end{proof}

We recall that a matrix $A$ is said to be \emph{irreducible} if there is no permutation matrix $P$ such that $PAP^\T$ is in block upper triangular form, i.e., $PAP^\T =  \begin{psmallmatrix}
E & F \\ 
0 & G
\end{psmallmatrix}$.
If $A$ is symmetric, then $F = 0$, so for symmetric matrices irreducibility means that 
$PAP^\T$ is never block-diagonal for any permutation matrix $P$. 

Let $A \in \S^n$ and consider the support graph  $G(A)$ of $A$, which is constructed as follows. Let the vertex set be $V \coloneqq \{1,\ldots, n\}$ and $(i,j)$ belongs to the edge set if and only if $A_{ij} \neq 0$ and $i \neq j$. Then, the following fact is well-known.

\begin{lemma}[e.g., {\cite[Chapter~2]{BP94}, \cite[Section~1.4]{Va09}}]\label{lem:irreducibility}
	$A \in \S^n$ is \emph{irreducible} if and only if $G(A)$ is connected.
\end{lemma}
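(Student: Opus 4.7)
The plan is to prove both directions by translating between symmetric permutations of $A$ and relabelings of the vertices of $G(A)$. The key observation is that for a permutation matrix $P$, the matrix $PAP^\T$ has entries $(PAP^\T)_{ij} = A_{\sigma(i)\sigma(j)}$ for the associated permutation $\sigma$, so applying $P$ just relabels the vertices of the support graph. Consequently, $A$ being reducible (i.e., some $PAP^\T$ is block diagonal, using symmetry to kill the off-diagonal block) is equivalent to the existence of a partition $V = I \sqcup J$ with $I, J \neq \emptyset$ such that $A_{ij} = 0$ for every $i \in I$, $j \in J$.

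First I would handle the direction ``$G(A)$ disconnected $\Rightarrow$ $A$ reducible.'' If $G(A)$ has a nontrivial partition $V = I \sqcup J$ with no edges between $I$ and $J$, then by definition of the support graph $A_{ij} = 0$ for all $i \in I$, $j \in J$ (the diagonal entries play no role since $i \neq j$ is required for edges, but if $i \in I, j \in J$ then automatically $i \neq j$). Choosing $P$ to list the indices of $I$ before those of $J$ then makes $PAP^\T$ block diagonal, hence reducible.

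Conversely, if $A$ is reducible, there is a permutation $P$ so that $PAP^\T$ has a block triangular form; by symmetry the off-diagonal block is also zero, so $PAP^\T$ is block diagonal with blocks of sizes $k$ and $n-k$ for some $1 \leq k \leq n-1$. Pulling this back to the original indexing produces a nontrivial partition $V = I \sqcup J$ with $A_{ij} = 0$ whenever $i \in I$ and $j \in J$; in particular no edge of $G(A)$ crosses between $I$ and $J$, so $G(A)$ is disconnected.

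The argument is essentially bookkeeping, so there is no real obstacle; the only subtle point is ensuring the reduction from the general reducible case (block upper triangular) to the symmetric block diagonal case, which is immediate from $A = A^\T$ forcing the upper-right block to vanish. Since this is a well-known classical fact, I would likely keep the proof brief and just cite \cite{BP94,Va09} as the paper already does.
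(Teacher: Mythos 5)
Your proof is correct, and it is the standard textbook argument. The paper itself does not prove this lemma --- it merely cites \cite{BP94} and \cite{Va09} --- so there is no internal proof to compare against; your reasoning (symmetric permutation as vertex relabeling, symmetry forcing the off-diagonal block to vanish, and translating block-diagonal structure into a vertex partition with no crossing edges) is exactly what those references supply, and your decision to keep it brief and cite the literature matches the paper's own treatment.
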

For self-dual  polyhedral cones, using  Lemma~\ref{lem:irreducibility} we can connect the notion of cone irreducibility with matrix irreducibility.
\begin{proposition}\label{prop:slack_irr}
	Let $\stdCone \subseteq \RR^d$ be a self-dual polyhedral cone and let $A \in \S(\stdCone)$ be a symmetric positive semidefinite slack matrix of $\stdCone$. Then, $\stdCone$ is irreducible if and only if $A$ is irreducible.
\end{proposition}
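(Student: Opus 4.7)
The plan is to pass to a realization of $\stdCone$ that is self-dual with respect to the Euclidean inner product, and then translate the statement into one about the support graph of $A$ via Lemma~\ref{lem:irreducibility}.

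First I would factor $A = XX^\T$ with $X \in \Re^{n\times d}$ of rank $d = \dim \stdCone$, which is available because $A$ is PSD and, by \cite[Lemma~13]{slack}, has rank equal to $\dim\stdCone$. Theorem~\ref{theo:slack} identifies $\stdCone$, up to linear isomorphism, with $\coner(X)$, and moreover $(\coner(X))^*_{\euc} = \conec(X^\T) = \coner(X)$. Since irreducibility is invariant under linear isomorphism, and $\S(\stdCone)$ is too by Proposition~\ref{prop:slack_inv}, I may assume from the outset that $\stdCone = \coner(X)$, that the rows $x_1,\ldots,x_n$ of $X$ generate the extreme rays of $\stdCone$, and that $A_{ij} = x_i^\T x_j$.

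By Lemma~\ref{lem:irreducibility}, $A$ is irreducible if and only if its support graph $G(A)$ is connected, so it suffices to show that $\stdCone$ is reducible if and only if $G(A)$ is disconnected. For the forward direction, suppose $\stdCone = \stdCone_1 \oplus \stdCone_2$. Lemma~\ref{lem:self_orthogonal} gives $\stdCone_1 \perp \stdCone_2$. A short face-theoretic argument shows that each extreme ray of $\stdCone$ lies entirely inside $\stdCone_1$ or entirely inside $\stdCone_2$: if $v$ generates an extreme ray and $v = u_1 + u_2$ with $u_k \in \stdCone_k$, then since $\stdCone_1, \stdCone_2$ are faces of $\stdCone$, both $u_k$ are nonnegative multiples of $v$, and having both strictly positive would place $v$ in $\spanVec \stdCone_1 \cap \spanVec \stdCone_2 = \{0\}$. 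Splitting $\{1,\ldots,n\} = I_1 \sqcup I_2$ accordingly, orthogonality then gives $A_{ij} = x_i^\T x_j = 0$ for every $i \in I_1$, $j \in I_2$, with both parts nonempty, so $G(A)$ is disconnected.

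For the converse, suppose $G(A)$ admits a partition $\{1,\ldots,n\} = I_1 \sqcup I_2$ into nonempty sets with no edge between them. Set $\stdCone_k \coloneqq \cone\{x_i : i \in I_k\}$ for $k=1,2$; each is a nonzero closed convex cone, $\stdCone = \stdCone_1 + \stdCone_2$, and $x_i^\T x_j = 0$ for every $i \in I_1$, $j \in I_2$ forces $\spanVec \stdCone_1 \perp \spanVec \stdCone_2$, hence $\spanVec \stdCone_1 \cap \spanVec \stdCone_2 = \{0\}$ and $\stdCone$ is reducible. The main potential obstacle is the face-theoretic decomposition of extreme rays of $\stdCone_1 \oplus \stdCone_2$ into the respective summands, which is not stated explicitly earlier but follows directly from the definition of a face together with the trivial intersection of the spans as sketched above; once that is in hand, the remainder is straightforward bookkeeping on the entries of $A$.
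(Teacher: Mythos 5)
Your proof is correct and follows essentially the same route as the paper's: pass to a Euclidean self-dual realization via the factorization $A = XX^\T$ (the paper invokes Theorem~\ref{thm:self_psd} and Proposition~\ref{prop:slack_inv} rather than Theorem~\ref{theo:slack}, but the effect is identical), and then identify connectivity of $G(A)$ with the orthogonal decomposition coming from Lemma~\ref{lem:self_orthogonal}. One cosmetic point in your extreme-ray-splitting step: the reason $u_1,u_2$ must be nonnegative multiples of $v$ is that the ray generated by $v$ is a face of $\stdCone$ and $u_1,u_2 \in \stdCone$ sum to $v$ --- not that $\stdCone_1,\stdCone_2$ are faces --- though the conclusion you draw from it is of course correct and fills in a step the paper leaves implicit.
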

\begin{proof}
%In view of item~\ref{thm:self_psd:2} of Theorem~\ref{thm:self_psd} and item~\ref{prop:slack_inv:2} of Proposition~\ref{prop:slack_inv}, we may consider that $A$ is a slack matrix of a cone $\stdCone'$ that is self-dual with respect the usual Euclidean inner product and such that $\stdCone$ is linearly isomorphic to $\stdCone$.
%With that,
By item~\ref{thm:self_psd:2} of Theorem~\ref{thm:self_psd}, $\stdCone$ is linearly isomorphic to a convex cone $\stdCone'$ that is self-dual with respect the Euclidean inner product. By item~\ref{prop:slack_inv:2} of Proposition~\ref{prop:slack_inv}, $A \in \S(\stdCone')$.
Therefore, there exists a $n\times d$ matrix $X$ such that its rows generate the extreme rays of   $\stdCone'$, so that  $\rank(X) = d$  and
\begin{equation}\label{eq:axx}
A = XX^\T.
\end{equation}
%where $X \in \Re^{n\times d}$,  $\rank(X) = \rank(A)$ and the rows of $X$ generate a cone $\stdCone'$ which is linearly isomorphic to $\stdCone$ by Theorem~\ref{theo:slack}. Then, Lemma~\ref{lem:iso} and Theorem~\ref{theo:slack} taken together imply that the rows of $X$ are actually the extreme rays of $\stdCone'$ and $\stdCone'^{*}_{\euc} = \stdCone'$.
Irreducibility is preserved under linear isomorphisms, so $\stdCone$ is irreducible if and only if $\stdCone'$ is irreducible.

With that in mind, first we prove that if $A$ is reducible then $\stdCone'$ is reducible. 
If $A$ is reducible, then 
there exists some permutation matrix $P$ such 
that $PAP^\T$ is block diagonal.
By \eqref{eq:axx}, we have  \[PAP^\T = PX (XP)^\T.\]
Therefore,  $PAP^\T$ being block diagonal means that the extreme rays of $\stdCone'$ can be re-arranged in two sets $\{u_1,\ldots, u_{\ell_1}\}\subseteq \RR^d$, $\{v_1,\ldots, v_{\ell_2}\}\subseteq \RR^d$ in such a way that $u_i$ and $v_j$ are orthogonal for all $i,j$.
Then, if $\stdCone_1$ is the cone generated by 
$\{u_1,\ldots, u_{\ell_1}\}$ and $\stdCone_2$ is the cone generated by  $\{v_1,\ldots, v_{\ell_2}\}\subseteq \RR^d$, we 
have $\stdCone' = \stdCone_1 \oplus \stdCone_2$.

Conversely, suppose that $\stdCone'$ is reducible.  Then, $\stdCone' = \stdCone_1 \oplus \stdCone_2$ with $\stdCone_1\cap \stdCone_2 = \{0\}$ and $\stdCone_1$, $\stdCone_2$  are nonzero cones with dimension $d_1$ and $d_2$, respectively. We also have $d_1 + d_2 = d$ and by Lemma~\ref{lem:self_orthogonal}, $\stdCone_1$ and $\stdCone_2$ are orthogonal with respect the Euclidean inner product.

Let $n_1$ and $n_2$ be the number of extreme rays of $\stdCone_1$ and $\stdCone_2$ and let $P$ be permutation matrix such that the first $n_1$ rows of $PX$ correspond to the extreme rays of $\stdCone_1$ and the last $n_2$ rows correspond to the extreme rays of $\stdCone_2$.
Then,
\[
PAP^\T = (PX)  (PX)^\T.
\]
Because of the orthogonality between $\stdCone_1$ and $\stdCone_2$, $PAP^\T$ is block diagonal, so $A$ is reducible.
\end{proof}

\paragraph{Extreme rays of the doubly nonnegative matrices}
Next, we review a characterization of the extreme rays of the doubly nonnegative matrices given in \cite{HL96}.

\begin{theorem}[{\cite[Theorem~2.1]{HL96}}]\label{theo:dnn_ext}
Suppose $A$ is a doubly nonnegative matrix of rank $k \geq 1$ and $A = XX^\T$ for some $n\times k$ matrix $X$.
Define 
\[
	\mathcal{W}_1 \coloneqq \{XQX^\T \mid Q \in \S^k \}, \qquad \mathcal{W}_2 \coloneqq \{Y \mid Y_{ij} = 0 \text{ if } A_{ij} = 0 \}.
\]
Then, $A$ generates an extreme ray of $\dnncone{n}$ if and only if 
$\mathcal{W}_1 \cap \mathcal{W}_2 = \{\lambda A \mid \lambda \in \Re\}$.
\end{theorem}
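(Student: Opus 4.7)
The plan is to prove both directions by working with the standard characterization of extremality: $A$ spans an extreme ray of $\dnncone{n}$ iff the only way to write $A = B + C$ with $B,C \in \dnncone{n}$ forces $B$ (hence $C$) to be a scalar multiple of $A$. The two spaces $\mathcal{W}_1$ and $\mathcal{W}_2$ are tailored precisely to encode the two defining conditions of $\dnncone{n}$ in an infinitesimal form: $\mathcal{W}_1$ captures matrices whose range lies in $\spanVec X$ (which is the tangent direction compatible with preserving positive semidefiniteness at $A$), and $\mathcal{W}_2$ captures matrices whose support is contained in that of $A$ (the tangent directions compatible with preserving nonnegativity).

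For the forward direction, I would suppose $A$ spans an extreme ray and take any $M \in \mathcal{W}_1 \cap \mathcal{W}_2$. Writing $M = XQX^\T$, I compute $A \pm \varepsilon M = X(I \pm \varepsilon Q)X^\T$, which is PSD for all sufficiently small $\varepsilon > 0$ since $I \pm \varepsilon Q \succ 0$. For nonnegativity, I observe that at entries where $A_{ij} = 0$ the entry $M_{ij}$ also vanishes by $M \in \mathcal{W}_2$, while at entries where $A_{ij} > 0$ one has $A_{ij} \pm \varepsilon M_{ij} \geq 0$ for small $\varepsilon$. Thus $A \pm \varepsilon M \in \dnncone{n}$, and from $A = \tfrac{1}{2}(A+\varepsilon M) + \tfrac{1}{2}(A-\varepsilon M)$ extremality forces both summands to be nonnegative scalar multiples of $A$, so $M \in \mathbb{R} A$.

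For the converse, suppose $\mathcal{W}_1 \cap \mathcal{W}_2 = \mathbb{R} A$ and decompose $A = B + C$ with $B,C \in \dnncone{n}$. Since $B,C$ are PSD with $B + C = A$, their ranges are contained in $\spanVec A = \spanVec X$; combined with the fact that $X$ has full column rank $k$ (because $\rank A = k$), this lets me write $B = XQX^\T$ for a symmetric $Q$ via the pseudoinverse $X^+$, so $B \in \mathcal{W}_1$. At any zero entry $A_{ij} = 0$, the fact that $B_{ij}, C_{ij} \geq 0$ and $B_{ij}+C_{ij}=0$ forces $B_{ij} = 0$, placing $B \in \mathcal{W}_2$. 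By hypothesis $B = \lambda A$ for some $\lambda \in \mathbb{R}$, and since $B$ and $A-B$ are both nonnegative (and $A \neq 0$) we get $\lambda \in [0,1]$, giving the extremality of $A$.

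The main obstacle I anticipate is not conceptual but technical: carefully justifying the representation $B = XQX^\T$ with $Q$ symmetric when $B$ is PSD with range in $\spanVec X$. This uses that $X$ has full column rank $k$, so $X^+ X = I_k$, allowing one to set $Q = X^+ B (X^+)^\T$ and verify $XQX^\T = B$ by expressing $B$ via any of its PSD factorizations whose columns lie in $\spanVec X$. Beyond that, the argument is a clean application of the definitions, and no further machinery from earlier in the paper is needed.
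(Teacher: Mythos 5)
Your argument is correct and is the standard proof of this extremality criterion via small two-sided perturbations inside the tangent space of the DNN cone at $A$. The paper itself does not reprove Theorem~\ref{theo:dnn_ext}; it cites \cite[Theorem~2.1]{HL96}, and your proof recovers essentially the argument one expects there: $\mathcal{W}_1$ is exactly the set of symmetric matrices with range in $\operatorname{col}(X)$ (your pseudoinverse observation $Q = X^+ B (X^{+})^\T$, together with $PBP = B$ when $\operatorname{range}(B)\subseteq\operatorname{col}(X)$ and $P=XX^+$, makes this precise), and $\mathcal{W}_2$ encodes the support constraint; the forward direction uses $A\pm\varepsilon M$ and extremality, and the converse uses the facts that any PSD summand of $A$ has range in $\operatorname{col}(X)$ and that nonnegativity of a sum of nonnegatives forces zeros to propagate. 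Two cosmetic points worth making explicit if you write this up: in the forward direction you should note that the reverse inclusion $\{\lambda A\}\subseteq \mathcal{W}_1\cap\mathcal{W}_2$ is immediate (take $Q=\lambda I$, and $A$ trivially shares its own support), so that the equality and not just one inclusion is established; and in the converse you should remark that $A\neq 0$ (guaranteed by $\operatorname{rank} A = k\geq 1$), so that $B=\lambda A$ with $B\succeq 0$ indeed gives $\lambda\geq 0$ and hence the correct conclusion that $B$ and $C=(1-\lambda)A$ lie on the ray through $A$.
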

Theorem~\ref{theo:dnn_ext} will be one of the main tools we will use, so we need to establish a few results that make it easier to reason about the sets
$\mathcal{W}_1$  and $\mathcal{W}_2$ appearing therein.

In what follows, for  $v \in \RR^d$ we define
\[
\zeroes(v) \coloneqq \{i \mid v_i = 0 \}.
\]

\begin{lemma}\label{lem:slack}
Let $\stdCone \subseteq \RR^d$ be a pointed full-dimensional polyhedral cone and let $A \in \S(\stdCone)$.
 Denote the $i$-th row of $A$ by $a^i$. Suppose that $v\neq 0$ belongs to the span of the rows of $A$. If
	for some $i$ we have 
	\[
	\zeroes(a^i) \subseteq \zeroes(v)
	\]
	then $v$ is a nonzero multiple of $a^i$.
\end{lemma}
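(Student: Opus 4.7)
The plan is to interpret $v$ as arising from a single vector $x\in\Re^d$ evaluated against the dual generators, and then to use the face-lattice duality of polyhedral cones to force $x$ to be parallel to $x_i$.

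First, I would fix generators $x_1,\dots,x_n$ of the extreme rays of $\stdCone$ and $y_1,\dots,y_m$ of the extreme rays of $\stdCone^*$ so that $A_{ij}=\inProd{x_i}{y_j}$; by Proposition~\ref{prop:slack_inv} the choice of inner product is immaterial. Since $v$ lies in the row span of $A$, I would write $v=\sum_{k}\alpha_k a^k$ and set $x\coloneqq\sum_k \alpha_k x_k\in\spanVec\stdCone=\Re^d$ (using that $\stdCone$ is full-dimensional). Then $v_j=\inProd{x}{y_j}$ for every $j\in\{1,\ldots,m\}$, so the entire problem has been transported from the row span of $A$ into linear functionals on $\stdCone^*$ evaluated at the $y_j$.

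Next, I would introduce the conjugate face $F^*\coloneqq\{y\in\stdCone^*:\inProd{x_i}{y}=0\}$ of the extreme ray $\Re_+ x_i$. Because $\stdCone$ is pointed and full-dimensional, so is $\stdCone^*$, and hence the polyhedral face $F^*$ is generated by the extreme rays of $\stdCone^*$ it contains, namely the $y_j$ with $j\in\zeroes(a^i)$. The hypothesis $\zeroes(a^i)\subseteq\zeroes(v)$ then yields $\inProd{x}{y_j}=0$ for every such $j$, so $x\in(F^*)^\perp$.

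The crux, and the step I expect to require the most care, is the dimension count for $F^*$: by the standard face-lattice duality for pointed full-dimensional polyhedral cones, conjugation is an inclusion-reversing bijection between the faces of $\stdCone$ and $\stdCone^*$ whose dimensions sum to $d$. Consequently the conjugate of an extreme ray is a facet, so $\dim F^* = d-1$ and $(F^*)^\perp$ is one-dimensional. Since this subspace obviously contains $x_i$, one concludes $x=c\,x_i$ for some $c\in\Re$. Plugging back gives $v_j=\inProd{cx_i}{y_j}=c\,a^i_j$, i.e.\ $v=c\,a^i$, with $c\ne 0$ since $v\ne 0$. Every other manipulation is routine linear algebra; only the extreme-ray/facet duality used to pin down $\dim F^*$ genuinely relies on $\stdCone$ being polyhedral.
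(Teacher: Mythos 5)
Your proof is correct and follows essentially the same route as the paper's: both reduce the problem to a vector $x$ (the paper calls it $z$) in $\Re^d$ whose pairings with the dual generators reproduce $v$, then use the extreme-ray/facet duality for pointed full-dimensional polyhedral cones (the paper cites Tam's theorems for this) to deduce that $x$ is confined to the one-dimensional orthogonal complement of the facet conjugate to the $i$-th extreme ray, hence is a multiple of $x_i$. The only cosmetic difference is that you phrase the dimension count via $\dim F^* = d-1$ and $(F^*)^\perp$, whereas the paper says $z$ is orthogonal to $(d-1)$ linearly independent extreme rays of $\stdCone^*$; these are the same observation.
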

\begin{proof}
	We have
	\[
	v = \sum _{j=1}^n \alpha _j a^j.
	\]
	%and $\alpha_j \geq 0$, for all $j$. 
	%Let $d$ denote the rank of $A$, then $A$ is the slack matrix of some pointed polyhedral cone $\stdCone \subseteq \Re^d$. 
	By virtue of $A$ being a slack matrix, we can write $A = UW$ where the rows of $U$ generate the extreme rays of $\stdCone$ and the columns of $W$ generate the extreme rays of $\stdCone^*_{\euc}$, where $\stdCone^*_{\euc}$  is the dual cone of $\stdCone$ with respect the usual Euclidean inner product.
	
	Denote the rows of $U$ by $u^j$ and the columns of $W$ by $w^j$. Let $z  \coloneqq \sum _{j=1}^n\alpha_j u^j$, Since 
	$A = UW$, we have $a^j = u^jW$ for every $j$. This implies that $v = zW$ and	
	 the $j$-th component of $v$
	satisfies
	\begin{equation}\label{eq:lem:slack}
	v_j = v{e^j} = zWe^j  =  z w^j 
	\end{equation}
	where $e^j$ is the $j$-th (column) unit vector. Just to avoid confusion, we remark that we are seeing $v$ and $z$ as row vectors, so $v{e^j}$ and $z w^j$ are indeed scalars.
	%and $v^j \geq 0$, since $w^j \in \stdCone^*$.
	Then, the condition $\zeroes(a^i) \subseteq \zeroes(v)$ together with \eqref{eq:lem:slack} imply that  $z$ is orthogonal to every extreme ray of $\stdCone^*_{\euc}$ that is orthogonal to $u^i$ (and potentially more). 
	
	We recall that every extreme ray of $\stdCone$ is exposed 
	by a facet of $\stdCone^*_{\euc}$ and a facet of  $\stdCone^*_{\euc}$ is a $(d-1)$-dimensional polyhedral cone, e.g., \cite[Theorems~2 and 3]{Ta76}.
	So $z$ is also orthogonal to (at least) $(d-1)$ linearly independent extreme rays of 
	$\stdCone^*_{\euc}$ among the ones that are orthogonal to $u^i$.
	This means that $z$ is in the space spanned by $u^i$, that is, $z$ is a multiple of $u^i$.
	Since $v$ is nonzero, $z$ must be nonzero 
	as well, so, in fact, $z = \beta u^i$, where $\beta \neq 0$.
	Since $v = zW$, we obtain $v = \beta a^i$.
	%Since the extreme ray $\RR_+u^i$ is exposed by the facet of $\stdCone^*$ generated by the $w^j$ with $j \in \zeroes(u^i)$, we see that $z$ is contained in the extreme ray  $\RR_+u^i$. 
	%Since $z$ is nonzero (because $v$ is nonzero), $z$ is a positive multiple of $u^i$.
	%This means that $\alpha _j = 0$ for $j \neq i$ and $\alpha _i > 0$. 
	%Therefore, $v$ itself is a positive multiple of $a^i$.
\end{proof}
\begin{lemma}\label{lem:dnn_w}
	Let $A$ be  a  $n\times n$ slack matrix such  
	that $A = XX^\T$, where $X$ is a $n\times d$ matrix  with $\rank(X) = d$.
	Define
	\[
	\mathcal{W}_1 \coloneqq \{XQX^\T \mid Q \in \S^d \}, \qquad \mathcal{W}_2 \coloneqq \{Y \mid Y_{ij} = 0 \text{ if } A_{ij} = 0 \}.
	\]
	If $B \in \mathcal{W}_1 \cap \mathcal{W}_2$, then the $i$-th row of $B$ is a multiple of the $i$-th row of $A$.  
\end{lemma}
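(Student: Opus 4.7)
The plan is to reduce the statement to Lemma~\ref{lem:slack} by showing that each row of $B$ lies in the row span of $A$ and inherits the zero pattern of the corresponding row of $A$ from the $\mathcal{W}_2$ membership.

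First, let $x_i^\T$ denote the $i$-th row of $X$. The $i$-th row of $B = XQX^\T$ can be written as $x_i^\T Q X^\T = (Qx_i)^\T X^\T$, which is a linear combination of the rows of $X^\T$ (viewed as vectors in $\RR^n$). In the same way, the $i$-th row of $A = XX^\T$ equals $x_i^\T X^\T$, so the rows of $A$ also lie in the $d$-dimensional row space $W$ of $X^\T$.

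Next, I would verify that the row span of $A$ is all of $W$. Because $X$ has rank $d$, there exist indices $i_1,\dots,i_d$ such that $x_{i_1},\dots,x_{i_d}$ are linearly independent in $\RR^d$. The corresponding rows $x_{i_k}^\T X^\T$ of $A$ are then linearly independent: any relation $\sum_k c_k\, x_{i_k}^\T X^\T = 0$ transposes to $X\bigl(\sum_k c_k x_{i_k}\bigr) = 0$, and the full column rank of $X$ forces $\sum_k c_k x_{i_k} = 0$, hence each $c_k=0$. So the rows of $A$ span a $d$-dimensional subspace of $W$, which must equal $W$. Combining with the previous step, every row of $B$ lies in the row span of $A$.

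Finally, writing $a^i$ and $b^i$ for the $i$-th rows of $A$ and $B$, the hypothesis $B \in \mathcal{W}_2$ gives $\zeroes(a^i) \subseteq \zeroes(b^i)$. If $b^i \neq 0$, Lemma~\ref{lem:slack} applied with $v = b^i$ yields that $b^i$ is a nonzero multiple of $a^i$; if $b^i = 0$, then $b^i$ is trivially the zero multiple of $a^i$. Either way, the $i$-th row of $B$ is a scalar multiple of the $i$-th row of $A$. The only mildly delicate point is identifying the row span of $A$ with $W$, and that follows cleanly from the rank assumption on $X$, so I do not foresee a significant obstacle.
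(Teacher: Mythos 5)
Your proof is correct and follows essentially the same approach as the paper: show each row of $B$ lies in the row span of $A$, observe that $\mathcal{W}_2$-membership gives $\zeroes(a^i) \subseteq \zeroes(b^i)$, and invoke Lemma~\ref{lem:slack}. The only cosmetic difference is that the paper works with column spaces and then invokes the symmetry of $A$ and $B$, whereas you argue directly on rows; you also explicitly handle the $b^i = 0$ case, which the paper leaves implicit.
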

\begin{proof}
	By the definition of $\mathcal{W}_1$, the space spanned by the columns of $B$ is contained in the space spanned by the columns of $X$. Now, the space spanned by the columns of $A$ is also contained in the space spanned by the columns of $X$, but since $\rank(X) = \rank(A) =d$, these two spaces are in fact equal.
	So, every column of $B$ is spanned by the columns of $A$. And since $A$ and $B$ are symmetric the same is true for the rows. 
	
	Finally, denote the $i$-th row of $A$ and $B$ by $a^i$ and $b^i$, respectively.
	Since $B \in \mathcal{W}_2$, we have 
	$\zeroes(a^i) \subseteq \zeroes(b^i)$, so 
	Lemma~\ref{lem:slack} tells us that $b^i$ is a multiple of $a^i$.
\end{proof}

\paragraph{Piecing everything together}
\begin{theorem}\label{theo:slack_ext}
	Let $A \in \S^n$ be a positive semidefinite slack matrix associated to a self-dual irreducible polyhedral cone $\stdCone \subseteq \RR^d$.
	Then, $A$ is an extreme ray of $\dnncone{n}$.
\end{theorem}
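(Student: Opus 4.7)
The plan is to invoke the Hildebrand--Laurent characterization stated as Theorem~\ref{theo:dnn_ext}, so the whole task reduces to showing that $\mathcal{W}_1 \cap \mathcal{W}_2 = \{\lambda A \mid \lambda \in \Re\}$. Since $A$ is a PSD slack of a $d$-dimensional cone, $A$ has rank $d$ (\cite[Lemma~13]{slack}), so we can factor $A = XX^\T$ with $X \in \Re^{n\times d}$ of full column rank, and apply the preparatory results of the section.

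\medskip

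First I would take an arbitrary $B \in \mathcal{W}_1 \cap \mathcal{W}_2$. By Lemma~\ref{lem:dnn_w}, each row $b^i$ of $B$ is a scalar multiple of the corresponding row $a^i$ of $A$; denote the multipliers by $\lambda_i$. These multipliers are unambiguously defined because by Remark~\ref{rem:def_slack} no row of a slack matrix in the sense of Definition~\ref{def:slack} is zero. Writing $B_{ij} = \lambda_i A_{ij}$ and using the fact that $B$ is symmetric (as an element of $\mathcal{W}_1$) together with the symmetry of $A$, we get
\begin{equation*}
(\lambda_i - \lambda_j)A_{ij} = 0 \quad \text{for all } i,j.
\end{equation*}

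\medskip

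The next step is to translate this into a statement about the support graph $G(A)$: whenever $(i,j)$ is an edge of $G(A)$, i.e., $A_{ij}\neq 0$ and $i\neq j$, we must have $\lambda_i = \lambda_j$. Therefore $\lambda_{\cdot}$ is constant on each connected component of $G(A)$. At this point the hypothesis that $\stdCone$ is irreducible enters: by Proposition~\ref{prop:slack_irr} the matrix $A$ is itself irreducible, and by Lemma~\ref{lem:irreducibility} this is equivalent to $G(A)$ being connected. Consequently all the $\lambda_i$ coincide with a single scalar $\lambda$, which gives $B = \lambda A$. Hence $\mathcal{W}_1 \cap \mathcal{W}_2 \subseteq \Re A$, the reverse inclusion is obvious, and Theorem~\ref{theo:dnn_ext} yields that $A$ generates an extreme ray of $\dnncone{n}$.

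\medskip

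I do not expect any genuine obstacle: the preparatory lemmas (\ref{lem:dnn_w}, \ref{lem:irreducibility}) and propositions (\ref{prop:slack_irr}) have been set up precisely to make this a short chain of deductions. The only point that requires a moment's care is the symmetry argument that promotes ``rows are proportional to rows of $A$'' into ``a single global proportionality constant''; this is where the irreducibility hypothesis is consumed, and it is essential, since for a reducible self-dual cone the block structure of $A$ would supply independent scalar multipliers on each block and $A$ would plainly not be extreme in $\dnncone{n}$.
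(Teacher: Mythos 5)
Your proof is correct and follows essentially the same route as the paper: reduce to the Hildebrand--Laurent criterion (Theorem~\ref{theo:dnn_ext}), use Lemma~\ref{lem:dnn_w} to get row-by-row proportionality $b^i = \lambda_i a^i$, then use symmetry together with connectedness of $G(A)$ (via Proposition~\ref{prop:slack_irr} and Lemma~\ref{lem:irreducibility}) to force all $\lambda_i$ to coincide. The only cosmetic difference is that the paper first explicitly passes to a Euclidean self-dual isomorph $\stdCone'$ via Theorem~\ref{thm:self_psd} and Proposition~\ref{prop:slack_inv} before writing $A = XX^\T$; you skip this because Lemma~\ref{lem:dnn_w} and $\mathcal{W}_1$ only depend on the column space of a rank-$d$ factor, which is determined by $A$ alone -- a legitimate shortcut.
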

\begin{proof}
	By item~\ref{thm:self_psd:2} of Theorem~\ref{thm:self_psd}, $\stdCone$ is linearly isomorphic to a convex cone $\stdCone'$ that is self-dual with respect the Euclidean inner product. By item~\ref{prop:slack_inv:2} of Proposition~\ref{prop:slack_inv}, $A \in \S(\stdCone')$.
	Therefore, there exists a $n\times d$ matrix $X$ such that its rows generate the extreme rays of   $\stdCone'$, so that $A = XX^\T$ and $\rank(X) = d$.		
	Consider the sets $\mathcal{W}_1$ and $\mathcal{W}_2$  of Lemma~\ref{lem:dnn_w}.
	Every $B \in \mathcal{W}_1 \cap \mathcal{W}_2$ is such that the $i$-th row of $B$ is a multiple of the $i$-th row of $A$. That is, 
	$b^i = \lambda _i a^i$, where $a^i$ and $b^i$ are the $i$-th rows of $A$ and $B$, respectively. 
	Since $A$ and $B$ are symmetric, 
	\[
	B_{ji} = B_{ij} = \lambda_i A_{ij} = \lambda_{j} A_{ji} = \lambda_{j} A_{ij}.
	\]
	If $A_{ij} \neq 0$, we must have $\lambda _{i} = \lambda_j$.
	Now, consider the support graph $G(A)$. Since 
	$A_{ij} \neq 0$ implies $\lambda _{i} = \lambda_{j}$, we have that $\lambda _{k}$ must be constant for all indices that belong to the same connected component of $G(A)$. 
	However, since $\stdCone$ is irreducible and irreducibility is preserved under linear isomorphism, $\stdCone'$ is irreducible. 
	Then, $A$ is irreducible by Proposition~\ref{prop:slack_irr}, so $G(A)$ has a single connected component, by Lemma~\ref{lem:irreducibility}.
	We conclude that all the $\lambda _i$'s must be the same and $B = \lambda A$ for some $\lambda \in \RR$. 
	By Theorem~\ref{theo:dnn_ext}, this implies that $A$ is an extreme ray of $\dnncone{n}$.
\end{proof}
Next, we discuss Theorem~\ref{theo:slack_ext} in view of what is known about the extreme rays of $\dnncone{n}$.
The fact is that there are still several gaps on our knowledge of the extreme rays of the doubly nonnegative cone. And, indeed, it is mentioned in \cite{BDS15} that the  extreme rays of $\dnncone{n}$ are not yet completely understood. 

For $n \leq 4$, we have $\dnncone{n} = \cppcone{n}$ and the extreme rays of $\dnncone{n}$ are 
exactly the rank~$1$ matrices in $\dnncone{n}$.
For $n = 5$, \cite[Theorem~3.1]{HL96} implies that the possible ranks of extreme rays of $\dnncone{5}$ are either $1$ or $3$ and \cite[Theorem~3.2]{HL96} tells us that $A \in \dnncone{5} $ with rank $3$ is an extreme ray 
if and only if the graph $G(A)$ is a cycle of length $5$. Here is an example of one such matrix.
\begin{example}[Self-dual cone over a pentagon]\label{ex:pentagon}
We start with a self-dual cone  $\stdCone$ constructed over a regular pentagon, as described in \cite[pg.~152]{BF76}. Its extreme rays are generated by
\[
\left(\cos ({2\pi i}/{5}),\sin ({2\pi i}/{5}),\sqrt{-\cos(4\pi/5)}\right), \qquad i = 0, \ldots, 4.
\]
Letting $U$ be the matrix that has those vectors in its row, we obtain the following slack matrix
\[
M = UU^\T = \begin{pmatrix}
1+\cos \! \left(\frac{\pi}{5}\right) & \frac{\sqrt{5}}{2} & 0 & 0 & \frac{\sqrt{5}}{2} 
\\
\frac{\sqrt{5}}{2} & 1+\cos \! \left(\frac{\pi}{5}\right) & \frac{\sqrt{5}}{2} & 0 & 0 
\\
0 & \frac{\sqrt{5}}{2} & 1+\cos \! \left(\frac{\pi}{5}\right) & \frac{\sqrt{5}}{2} & 0 
\\
0 & 0 & \frac{\sqrt{5}}{2} & 1+\cos \! \left(\frac{\pi}{5}\right) & \frac{\sqrt{5}}{2} 
\\
\frac{\sqrt{5}}{2} & 0 & 0 & \frac{\sqrt{5}}{2} & 1+\cos \! \left(\frac{\pi}{5}\right) 
\end{pmatrix}.
\]	
$G(M)$ is a $5$-cycle and therefore connected, so $\stdCone$ is irreducible 
by Proposition~\ref{prop:slack_irr}.
By Theorem~\ref{theo:slack_ext}, $M$ is an extreme ray of $\dnncone{5}$.
	
\end{example}

 In fact, all the non completely positive extreme rays of $\dnncone{5}$ are  generated by slack matrices of self-dual cones over a pentagon. In order to make this claim precise, we state the following result, which follows from \cite[Theorem~24]{slack}\footnote{The only caveat is that the slack matrices considered in \cite{slack} may have some redundancies as described in Remark~\ref{rem:def_slack}. However, with some effort and considering that those redundancies are reflected in the zero pattern of the matrices and $N$ does not have such redundancies, one may show that Theorem~\ref{thm:comb_slack} indeed holds as stated considering the definition of slack matrices as in Definition~\ref{def:slack}.}. \begin{theorem}\label{thm:comb_slack}
	Suppose that $M \in \Re^{n\times n}$ is a rank $d$ nonnegative matrix and that there exists a slack matrix $N$ of a pointed full-dimensional polyhedral cone $\stdCone \subseteq \Re^d$ such that $M_{ij} \neq 0 \Leftrightarrow N_{ij} \neq 0$. Then, 
	$M$ is also the slack matrix of a pointed full-dimensional polyhedral cone in $\Re^d$.
\end{theorem}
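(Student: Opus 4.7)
The plan is to invoke \cite[Theorem~24]{slack} as a black box. That theorem asserts exactly the desired statement but with respect to the looser notion of slack matrix used in \cite{slack}, which, unlike Definition~\ref{def:slack}, permits three kinds of redundancy: rows of zeros, columns of zeros, and rows or columns that repeat the same extreme ray. The content of the remaining argument is thus to rule out all three redundancies in our setting using the zero-pattern hypothesis.

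Zero rows and zero columns are excluded immediately: by \eqref{eq:lineality} and the pointedness of $\stdCone$, a zero row of $N$ would correspond to an extreme ray of $\stdCone$ lying in $(\stdCone^*)^\perp = \stdCone \cap (-\stdCone) = \{0\}$, which is impossible; a zero column is ruled out symmetrically using that $\stdCone$ is full-dimensional (both observations already appear in the discussion of Remark~\ref{rem:def_slack}). Since $M$ shares the zero pattern of $N$, the same holds for $M$ and therefore for the cone $\stdCone' \subseteq \Re^d$ produced by \cite[Theorem~24]{slack}.

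For the third redundancy, suppose the generating list for $\stdCone'$ output by \cite[Theorem~24]{slack} contains indices $i \neq i'$ pointing along the same extreme ray of $\stdCone'$. Then rows $i$ and $i'$ of $M$ would be positive scalar multiples of each other and in particular would share the same zero pattern; by the matching hypothesis, rows $i$ and $i'$ of $N$ would then also share the same zero pattern, contradicting the observation in Remark~\ref{rem:def_slack} that a slack matrix in the sense of Definition~\ref{def:slack} cannot have two rows with the same zero pattern. A symmetric argument on the dual side rules out duplicated extreme rays in $(\stdCone')^*_{\euc}$. Combined, these observations promote the conclusion of \cite[Theorem~24]{slack} from a \emph{loose} slack matrix to a slack matrix in the strict sense of Definition~\ref{def:slack}.

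The main obstacle is unwrapping \cite[Theorem~24]{slack} sufficiently to identify, inside its output, the cone $\stdCone'$ together with canonical generating data for $\stdCone'$ and $(\stdCone')^*_{\euc}$ indexed by the rows and columns of $M$; once that bookkeeping is in place, the zero-pattern observations above close the gap between the two definitions of slack matrix.
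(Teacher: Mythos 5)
Your proof follows essentially the same route the paper takes: it invokes \cite[Theorem~24]{slack} as a black box and then, exactly as the paper's footnote to this theorem indicates, uses the zero-pattern hypothesis together with the observations of Remark~\ref{rem:def_slack} (no zero rows or columns, no two rows with the same zero pattern) to rule out the redundancies that distinguish the loose notion of slack matrix in \cite{slack} from Definition~\ref{def:slack}. The caveat you raise at the end about unwrapping the exact output of \cite[Theorem~24]{slack} corresponds to the ``with some effort'' hedge in the paper's own footnote, so the two arguments are at the same level of detail.
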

We can now prove the following curious characterization of the extreme rays of $\dnncone{5}$.
\begin{theorem}[Characterizing the extreme rays of $\dnncone{5}$]\label{theo:dnn5}
Let $X \in \dnncone{5}$.
Then, $X$ generates an extreme ray of $\dnncone{5}$ if and only if $X$ is either rank~$1$ or a slack matrix of a irreducible self-dual cone over a pentagon.
\end{theorem}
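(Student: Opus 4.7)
The plan is to handle the two implications separately, leaning heavily on the machinery already developed.

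For the easy direction, suppose first that $X$ is rank~$1$ and doubly nonnegative. Since every rank~$1$ matrix in $\dnncone{n}$ is of the form $vv^{\T}$ with $v \geq 0$, it generates an extreme ray (this is the $n \leq 4$ situation recalled just before the theorem and is standard). If instead $X$ is a slack matrix of an irreducible self-dual cone over a pentagon, then Theorem~\ref{theo:slack_ext} immediately yields that $X$ spans an extreme ray of $\dnncone{5}$.

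For the forward direction, assume $X$ generates an extreme ray of $\dnncone{5}$. By \cite[Theorem~3.1]{HL96}, the rank of $X$ is either $1$ or $3$; the rank~$1$ case is already in the statement, so suppose $\rank(X) = 3$. Then \cite[Theorem~3.2]{HL96} asserts that the support graph $G(X)$ is a $5$-cycle. Now I invoke Example~\ref{ex:pentagon}: the PSD slack $M$ of the self-dual cone over the regular pentagon is a $5\times 5$ rank~$3$ matrix whose support graph is also a $5$-cycle. In particular $M$ and $X$ have identical zero patterns. Applying Theorem~\ref{thm:comb_slack} with $N = M$ (so $d = 3$) yields that $X$ is itself the slack matrix of some pointed, full-dimensional polyhedral cone $\stdCone \subseteq \Re^{3}$.

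It remains to identify $\stdCone$ as an irreducible self-dual cone over a pentagon. Self-duality follows from Theorem~\ref{thm:self_psd}: $X$ is PSD and belongs to $\S(\stdCone)$, so $\stdCone$ is self-dual with respect to some inner product. Irreducibility follows from Proposition~\ref{prop:slack_irr}, since $G(X)$ is a single $5$-cycle hence connected, making $X$ irreducible in the matrix-theoretic sense by Lemma~\ref{lem:irreducibility}. Finally, $\stdCone$ is a pointed full-dimensional polyhedral cone in $\Re^{3}$ with exactly $5$ extreme rays (matching the $5$ rows of $X$), so a transversal affine slice of $\stdCone$ is a pentagon, i.e.\ $\stdCone$ is a cone over a pentagon.

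The only non-mechanical step is the use of Theorem~\ref{thm:comb_slack} to upgrade knowledge of the zero pattern and rank into the conclusion that $X$ is actually a slack matrix of some $3$-dimensional cone; everything else is a direct assembly of previously proved facts. I expect that to be the most delicate point, and it is handled entirely by the existence of the explicit reference slack $M$ of Example~\ref{ex:pentagon} with matching combinatorics.
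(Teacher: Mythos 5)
Your proof follows the paper's argument essentially step for step: both directions, the invocation of \cite[Theorems~3.1 and 3.2]{HL96}, the use of Example~\ref{ex:pentagon} together with Theorem~\ref{thm:comb_slack} to recognize $X$ as a slack matrix of a $3$-dimensional cone, and the final appeal to Theorem~\ref{thm:self_psd} and Proposition~\ref{prop:slack_irr} all mirror the paper. The only imprecision is the claim that $M$ and $X$ ``have identical zero patterns'': two symmetric $5\times5$ matrices whose support graphs are both $5$-cycles are in general only permutation-equivalent, so (as the paper does) one should first permute the rows and columns of the reference slack from Example~\ref{ex:pentagon}---which remains a slack of a self-dual pentagon cone---to align its support with that of $X$ before applying Theorem~\ref{thm:comb_slack}.
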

\begin{proof}
If $X$ is rank~$1$ then $X$ is an extreme ray of $\dnncone{5}$ since it is even an extreme ray of $\psdcone{n}$.
If $X$ is the slack matrix of a irreducible self-dual cone over a pentagon, then, by Theorem~\ref{theo:slack_ext}, it must be an extreme ray of $\dnncone{5}$.

Conversely, suppose that $X$ is an extreme ray of $\dnncone{5}$.  By \cite[Theorem~3.1]{HL96}, the rank 
of $X$ is either $1$ or $3$. If it is $1$, we are done. So, suppose that the rank of $X$ is $3$. Then, by \cite[Theorem~3.2]{HL96}, the graph $G(X)$ is a cycle of length $5$. 
The slack matrix constructed in Example~\ref{ex:pentagon} for the self-dual cone over a pentagon is also a $5$-cycle. So, permuting the rows of $U$ in Example~\ref{ex:pentagon} if necessary, there exists a least one slack matrix $M$ such that $X_{ij} \neq 0 \Leftrightarrow M_{ij} \neq 0$ holds.
By Theorem~\ref{thm:comb_slack}, $X$ must be the slack matrix of a pointed full-dimensional polyhedral cone $\stdCone \subseteq \Re^3$. Since $X$ is PSD and $G(X)$ is connected, $\stdCone$ is a irreducible self-dual (with respect to some inner product) cone with $5$ extreme rays. That is, it is a cone generated by some pentagon.
\end{proof}

%For $n \geq 6$, although \cite[Theorem~3.1]{HL96} shows the possible ranks of the extreme rays and provide ways of constructing them, a complete classification of extreme rays of $\dnncone{n}$ is not known. 

Next, we discuss Theorem~\ref{theo:slack_ext} for the cases $n =6$ and $n = 7$.
For $\dnncone{6}$, the possible ranks of extreme rays are  $1$ or $3$ \cite[Theorem~3.1]{HL96}. We note that a $6 \times 6$  rank~$3$ slack matrix must be the slack of a polyhedral cone in $\Re^3$ with $6$ extreme rays (which is a consequence of Theorem~\ref{theo:slack}). However, a self-dual polyhedral cone in $\Re^3$ must have a odd number of extreme rays \cite[Theorem~3]{BF76}. Therefore, Theorem~\ref{theo:slack_ext}  gives us no extreme rays in $\dnncone{6}$.
However, Theorem~\ref{theo:slack_ext}  does provides new extreme rays for $n \geq 7$.

\begin{example}[New families of extreme rays of $\dnncone{7}$]\label{ex:ext}
%	In $\dnncone{5}$ the extreme rays that are not completely positive are precisely those that have rank $3$ and the zero pattern forms a cyclic pattern,  see \cite[Theorems~3.1 and 3.2]{HL96}. This means that they are exactly those that are slack matrices of a self-dual cone over a pentagon.
%	 
%	For $\dnncone{6}$, the possible ranks are  $1$ or $3$. We note that a $6 \times 6$  rank~$3$ slack matrix must be the slack of a polyhedral cone in $\Re^3$ with $6$ extreme rays (which is a consequence of Theorem~\ref{theo:slack}). However, a self-dual polyhedral cone in $\Re^3$ must have a odd number of extreme rays \cite[Theorem~3]{BF76}. Therefore, Theorem~\ref{theo:dnn_ext}  gives us no extreme rays in $\dnncone{6}$. 
	
	We have a family of $7\times 7$ rank $4$ matrices that are the slack matrices of self-dual polyhedral cones. This is given by the slack matrix of the cone over a self-dual roofed triangular prism. A concrete example is the cone generated by the seven rays
	\begin{equation}\label{eq:cone7}\left( 1 , -\frac{1}{\sqrt{2}} , \pm \sqrt{\frac{3}{2}} , 0 \right),  \left( 1 , \sqrt{2} , 0 , 0 \right), \left( 1 , -\frac{1}{\sqrt{2}} , \pm \sqrt{\frac{3}{2}} , -1 \right),  \left( 1 , \sqrt{2} , 0 , -1 \right),\left( 1 , 0 , 0 , 1 \right)
	\end{equation}
	which is self-dual (\cite{tiwary2014self,MR4239237}).
	{This is obtained by applying the  second construction described in the proof of \cite[Theorem~4.7]{MR4239237} to the case $k = 1$ to generate a negatively self-polar polytope $P \subseteq \Re^3$.} Then, proceeding as in Section~\ref{sec:polytope}, the corresponding cone in $\Re^4$ generated by $1\times P$ is self-dual and has the extreme rays as in \eqref{eq:cone7}.
	 This corresponds to the cone over the roofed triangular prism shown in Figure \ref{fig:roofed_prism}.
	\begin{figure}
		\begin{center}
			\includegraphics[width=4cm]{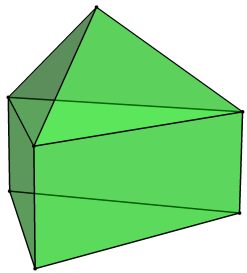}
		\end{center}
		\caption{Negatively self-polar roofed triangular prism} \label{fig:roofed_prism}
	\end{figure}
	The slack matrix will then provide an extreme ray for the $\dnncone{7}$. Explicitly, the  matrix
	$$S=\left(
	\begin{array}{ccccccc}
	3 & 0 & 0 & 3 & 0 & 0 & 1 \\
	0 & 3 & 0 & 0 & 3 & 0 & 1 \\
	0 & 0 & 3 & 0 & 0 & 3 & 1 \\
	3 & 0 & 0 & 4 & 1 & 1 & 0 \\
	0 & 3 & 0 & 1 & 4 & 1 & 0 \\
	0 & 0 & 3 & 1 & 1 & 4 & 0 \\
	1 & 1 & 1 & 0 & 0 & 0 & 2 \\
	\end{array}
	\right)$$
	is a rank $4$ extreme ray of $\dnncone{7}$. In fact, any matrix with the same rank and zero pattern (up to permutations) can be guaranteed to come from a self-dual cone combinatorially equivalent to this one, which follows from Theorem~\ref{thm:comb_slack}. 
	Since the rank of extreme rays in $\dnncone{7}$ is at most $5$, and we have a full characterization of the combinatorics of all self-dual polyhedral cones of dimension up to $5$ with at most seven vertices (see more details in Section~\ref{sec:exp}), %\todo{} 
	we know this is the only new family that our result guarantees to exist for $\dnncone{7}$.
\end{example}

Next we observe that the construction 
provided in \cite[Lemma 3.7]{HL96} does not necessarily correspond to self-dual polyhedra. So, indeed, the family of extreme rays coming from Theorem~\ref{theo:slack_ext} seems to be distinct. 
%Unfortunately, it turns out that we cannot 
%generate \emph{all} extreme rays of $\dnncone{7}$. in this way . There are extreme rays of $\dnncone{n}$ generated by matrices with rank larger than $1$ that do not come from self-dual polyhedra.

\begin{example}\label{ex:ext_not_slack}
In  \cite[Lemma 3.7]{HL96} the authors explicitly construct extreme matrices of the $\dnncone{n}$ of all allowable ranks. Applying their construction for $7 \times 7$ rank $4$ matrices we get the matrix
\[A=\left(
\begin{array}{ccccccc}
 2 & 1 & 0 & 0 & 2 & 0 & 2 \\
 1 & 2 & 1 & 0 & 0 & 0 & 0 \\
 0 & 1 & 2 & 2 & 0 & 2 & 0 \\
 0 & 0 & 2 & 3 & 1 & 3 & 1 \\
 2 & 0 & 0 & 1 & 3 & 1 & 3 \\
 0 & 0 & 2 & 3 & 1 & 4 & 0 \\
 2 & 0 & 0 & 1 & 3 & 0 & 4 \\
\end{array}
\right).\]
A slack matrix of a pointed full-dimensional cone $\stdCone \subseteq \Re^d$ must have rank $d$ and each row must have at least $d-1$ zeros, since each extreme ray of $\stdCone$ is orthogonal to a facet of $\stdCone^*$, e.g., \cite[Theorems~2 and 3]{Ta76}.
Therefore, $A$ is not a slack matrix of a polyhedral cone of dimension $4$, since there are two rows with only two zeroes. 

However, one can still interpret geometrically any doubly nonnegative matrix $A$. Since any such matrix of rank $k$ can be written as $A=XX^\T$, we can think of the $k$ dimensional cone generated by the rows of $X$, which is denoted by $\coner(X)$. The nonnegativity of $A$ is equivalent to $\coner(X) \subseteq \coner(X)^*$. When we have equality, we are in the scope of our previous result, and it seems plausible that any perturbation will break the inclusion, hence we have extremality. In Figure \ref{fig:polar_inclusion} we show a compact slice of $\coner(X)$. We can see that in the case of the extreme ray that does not come from a self-dual cone, the cone and its dual (represented here by their homogenisation) while not equal,  fit tightly with many of their facets spanning the same linear spaces.
\begin{figure}
\begin{center}
\includegraphics[width=8cm]{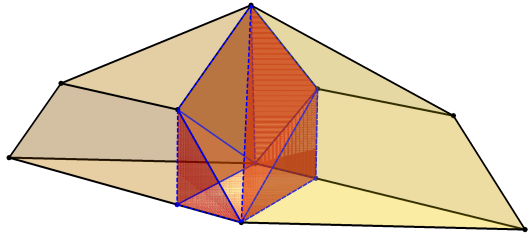}
\end{center}
\caption{A polytope $P$ in red included in its negative polar in yellow.} \label{fig:polar_inclusion}
\end{figure}
\end{example}
Example~\ref{ex:ext_not_slack} point us towards an interesting direction.
Although it is not possible to generate all extreme rays via self-dual cones,  we will conclude this subsection by showing that all DNN matrices arise from slacks of self-dual cones via  generalized congruence through a nonnegative matrix.
%We start with the following result. 
 This is possible because for any pointed full-dimensional cone $\stdCone$ satisfying $\stdCone \subseteq \stdCone^*_{\euc}$, one can ``squeeze'' a self-dual cone as follows, see also \cite[Theorem~5]{BF76} for a related result.
%This is a corollary of the following result.
%\begin{corollary}[{\cite[Corollary~7.3]{MR4239237}}]
%Let $P$ be a polytope
%	
%\end{corollary}
% 
% \footnote{A caveat is that \cite[Corollary~7.3]{MR4239237} is given for polytopes, but, it can be translated to the conic setting. A related result is \cite[Theorem~5]{BF76}.}
%
\begin{proposition}\label{prop:self_squeeze}
Let $\stdCone \subseteq \Re^d$ ($d \geq 2$) be a pointed full-dimensional polyhedral cone satisfying $\stdCone \subseteq \stdCone^*_{\euc}$. Then, there exists a self-dual polyhedral cone (with respect the Euclidean inner product) such that $\hat \stdCone$ such that $\stdCone \subseteq \hat \stdCone \subseteq \stdCone^*_{\euc}$.
\end{proposition}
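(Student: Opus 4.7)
\medskip

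\noindent\textbf{Proof plan.} The plan is to enlarge $\stdCone$ toward $\stdCone^*_{\euc}$ by adding one extreme ray at a time. Set $K_0 \coloneqq \stdCone$ and iterate: at step $i$, if $K_i = (K_i)^*_{\euc}$, take $\hat{\stdCone} \coloneqq K_i$ and stop; otherwise, since $K_i$ and $(K_i)^*_{\euc}$ are polyhedral with $K_i \subsetneq (K_i)^*_{\euc}$, some extreme ray of $(K_i)^*_{\euc}$ must lie outside $K_i$ (if every extreme ray of $(K_i)^*_{\euc}$ were in $K_i$, then $(K_i)^*_{\euc} \subseteq K_i$, a contradiction). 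Pick such a ray $v$ and set $K_{i+1} \coloneqq K_i + \Re_+ v$.

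The key step-lemma is that each such enlargement preserves the chain $\stdCone \subseteq K_i \subseteq (K_i)^*_{\euc} \subseteq \stdCone^*_{\euc}$. Pointedness of $K_{i+1}$ follows from $K_{i+1} \subseteq (K_{i+1})^*_{\euc}$: for any $x$ with $x, -x \in K_{i+1}$, one gets $-\|x\|^2 \geq 0$, forcing $x = 0$. Polyhedrality is automatic. The containment $K_{i+1} \subseteq (K_{i+1})^*_{\euc}$ reduces to nonnegativity of pairwise inner products among the generators of $K_{i+1}$: generators of $K_i$ pair nonnegatively with each other because $K_i \subseteq (K_i)^*_{\euc}$; they pair nonnegatively with $v$ because $v \in (K_i)^*_{\euc}$; and $\langle v, v\rangle \geq 0$ is automatic. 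The inclusion $(K_{i+1})^*_{\euc} \subseteq (K_i)^*_{\euc}$ follows from $K_{i+1} \supseteq K_i$, and it is strict because $v \notin K_i = ((K_i)^*_{\euc})^*_{\euc}$ produces, by the bipolar theorem, some $y \in (K_i)^*_{\euc}$ with $\langle v, y\rangle < 0$, and such $y$ is excluded from $(K_{i+1})^*_{\euc}$.

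The main obstacle is termination. The sequence $(K_i)$ is strictly increasing inside the polyhedral ambient $\stdCone^*_{\euc}$, but in principle the new extreme rays that arise at each step need not come from a pre-determined finite family, so closing up an infinite chain could, a priori, produce a non-polyhedral limit. My approach would be to package the construction through Zorn's lemma applied to the partially ordered family $\mathcal{F}$ of pointed polyhedral cones $K$ with $\stdCone \subseteq K \subseteq K^*_{\euc} \subseteq \stdCone^*_{\euc}$, ordered by inclusion; a maximal element is forced to be self-dual by the step-lemma. Producing polyhedral upper bounds for chains is the delicate point. One option is to argue that because all generators appearing in the iteration lie inside $\stdCone^*_{\euc}$, the candidate extreme rays at every step arise from a combinatorial arrangement of finitely many hyperplane intersections and therefore form a finite family. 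Alternatively, one can first run Zorn on closed (not necessarily polyhedral) convex cones to obtain a self-dual $\hat K$ with $\stdCone \subseteq \hat K \subseteq \stdCone^*_{\euc}$, and then replace it by a polyhedral self-dual cone inscribed between $\stdCone$ and $\stdCone^*_{\euc}$, exploiting the polyhedrality of the ambient cone.
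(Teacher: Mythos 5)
Your step-lemma is correct: given a pointed, full-dimensional polyhedral cone $K$ with $K \subsetneq K^*_{\euc}$, one can adjoin an extreme ray $v$ of $K^*_{\euc}$ that lies outside $K$, and $K + \Re_+ v$ remains pointed, polyhedral, and contained in its own Euclidean dual, with the dual strictly shrinking. The checks you outline (nonnegativity of pairwise inner products between generators, pointedness via $-\norm{x}^2 \geq 0$, strictness via bipolarity) are all fine. The gap is exactly where you flag it: termination. Neither of your two proposed fixes closes it.

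For the first alternative, the claim that the candidate extreme rays ``arise from a combinatorial arrangement of finitely many hyperplane intersections and therefore form a finite family'' is not justified. The extreme rays of $(K_i)^*_{\euc}$ are $(d-1)$-fold intersections of the hyperplanes orthogonal to the generators of $K_i$; but those generators are themselves rays introduced at earlier stages, so the stock of hyperplanes (and hence of candidate rays) grows with the iteration. There is no a priori finite pool, and you give no argument that the recursion cannot keep producing genuinely new directions indefinitely. For the second alternative, running Zorn's lemma over closed convex cones does produce a self-dual closed convex cone $\hat K$ with $\stdCone \subseteq \hat K \subseteq \stdCone^*_{\euc}$ (the closure of the union of a chain still satisfies $\bar K \subseteq \bar K^*_{\euc}$, and a non-self-dual maximal element could still be enlarged by an extreme ray of its dual). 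But the concluding step — ``replace it by a polyhedral self-dual cone inscribed between $\stdCone$ and $\stdCone^*_{\euc}$'' — is literally the statement of the proposition being proved; knowing that a non-polyhedral self-dual cone exists in the sandwich gives no mechanism for producing a polyhedral one.

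For comparison, the paper does not attempt a greedy construction at all. It reduces the cone statement to the polytope statement by slicing $\stdCone$ with an affine hyperplane through an interior direction, obtaining a polytope $P$ with $P \subseteq -P^\circ$, and then invokes \cite[Corollary~7.3]{MR4239237}, which asserts the existence of a negatively self-polar $Q$ with $P \subseteq Q = -Q^\circ \subseteq -P^\circ$. The reduction itself is routine; the hard content is the cited corollary, and the termination difficulty you encountered is precisely the substance that result supplies. As written, your argument is not a self-contained replacement for it and would need a genuinely new termination idea to stand on its own.
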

\begin{proof}
This result essentially follows from {\cite[Corollary~7.3]{MR4239237}}, but we explain the details.
%\cite[Corollary~7.3]{MR4239237} is stated for an arbitrary polyhedral set (not necessarily a cone) $P$ satisfying $P \subseteq - P^\circ$, where $P^\circ$ is the polar as in Section~\ref{sec:polytope} and the inner product is the usual Euclidean one. 
%Under these conditions, there exists a polyhedral set $Q$ satisfying $P \subseteq Q = -Q^\circ \subseteq -P^\circ$. 
%Let $\stdCone \subseteq \Re^d$ be a pointed full-dimensional polyhedral cone satisfying $\stdCone \subseteq \stdCone^*_{\euc}$. 
We recall that every pointed cone 
satisfies $\reInt \stdCone \cap \reInt \stdCone^* \neq \emptyset$ (e.g., \cite[Footnote~1]{LRS20}), so 
$\stdCone \subseteq \stdCone^*_{\euc}$ implies that, in fact, $\reInt \stdCone \subseteq \reInt \stdCone^*_{\euc}$ holds, see \cite[Corollary~6.5.2]{RT97}.

Let $w \in \reInt \stdCone$ of be of norm $1$ and let $U \in \Re^{d\times d}$ be any orthogonal matrix such that 
$Uw = (1,0,\ldots, 0)$ and 
let $\tilde \stdCone \coloneqq U \stdCone$.
In view of \eqref{eq:dual_iso}, $\tilde \stdCone^*_{\euc} = U^{-\T} \stdCone^*_{\euc} = U\stdCone^*_{\euc}$.
So $\tilde \stdCone$ is another cone such that $\tilde \stdCone \subseteq \tilde \stdCone^*_{\euc}$. 
Since $(1,0,\ldots, 0) \in \reInt \tilde \stdCone \subseteq \reInt \tilde\stdCone^*_{\euc}$ and 
$\tilde \stdCone$ is pointed, 
the slice
\[
C \coloneqq \{(1,x) \in \Re \times \Re^{d-1} \mid (1,x) \in \tilde\stdCone \}%, \qquad  \tilde C \coloneqq \{(1,x) \in \Re \times \Re^{d-1} \mid (1,x) \in \tilde\stdCone^*_{\euc} \}
\]
 is compact and generates $\tilde \stdCone$.
Then, projecting $C$ onto $\Re^{d-1}$ along the last $d-1$ coordinates we obtain a compact polyhedral set $P \subseteq \Re^{d-1}$, such that 
$1 \times P$ generates $\stdCone$. 
Since $(1,0,\cdots,0) \in \reInt \tilde \stdCone$ and $\tilde \stdCone$ is full-dimensional, $P$ has $0$ in its interior. Under these conditions, 
it can be verified that $1 \times (-P^\circ)$ generates $\stdCone^*_{\euc}$, e.g., see \cite[Proposition~3.3]{Vera14}.

Since $\tilde \stdCone \subseteq \tilde \stdCone^*_{\euc}$, $1\times P$ is contained in the convex cone generated by $1\times (-P^\circ)$. Therefore, 
$P \subseteq -P^\circ$.
By  {\cite[Corollary~7.3]{MR4239237}}, there exists a polyhedral set $Q$ such that $P \subseteq Q = -Q^\circ \subseteq -P^\circ$. $Q$ must be compact as well, since  $P^\circ$ is compact (recall that $P \subseteq \Re^d$ is full-dimensional in $\Re^d$ and has $0$ in its interior).
Let $\hat \stdCone$ be the convex cone generated by $1\times Q$. Then, since $Q$ is compact and has $0$ in its interior, $\hat \stdCone^*_{\euc}$ is generated by $1\times (- Q^\circ)$, so $\hat \stdCone$ is self-dual and satisfies
\[
\tilde \stdCone \subseteq \hat \stdCone = \hat \stdCone^*_{\euc} \subseteq \tilde \stdCone^*_{\euc}.
\]
Therefore, 
\[
\stdCone \subseteq U^{T} \hat \stdCone = (U^\T \hat \stdCone)^*_{\euc} \subseteq 
\stdCone^*_{\euc},
\]
so $U^{T} \hat \stdCone$ is a self-dual cone polyhedral sandwiched between $\stdCone$ and 
$\stdCone^*_{\euc}$.
%If $P= \stdCone$ is a cone, then $-P^\circ = \stdCone^*_{\euc}$.
%Now, if $Q$ were also a cone, then we would be done, but it is not immediately clear whether $Q$ is a cone, so we address this issue. Denote the recession cone of $Q$ and the closure of the convex cone generated by $Q$ by $\rec Q$ and $\cone Q$, respectively.
%Then, \cite[Theorem~14.6]{RT97} tells us that the recession cone of $Q$ and $\cone (Q^\circ)$ are polar to each other, i.e., $(\rec Q)^\circ = \cone (Q^\circ)$. Since $Q^\circ = -Q$, we 
%obtain $(\rec Q)^\circ = -\cone (Q)$.
%
%On the other hand $\cone Q = (\rec(Q^\circ))^\circ$.
%
%
%We also have $\rec Q = (\cone (Q^\circ))^\circ = -(\cone Q)^\circ$.
%
%
%On the other hand, since $Q$ and $Q^\circ$ contain the origin (since they both contain $P$), we have $Q^{\circ \circ} = Q$. 
%In particular, applying \cite[Theorem~14.6]{RT97} to 
%$Q^\circ$, we get $\rec(Q^\circ)^\circ = \cone Q$.
% $\cone Q = \rec(Q^\circ)$, so that 
%$\cone Q = -\rec(Q)$.
%%Dually, $(\rec(Q^\circ))^\circ = \cone Q$ also holds. Since $Q^\circ = - Q$ so that $-(\rec(Q))^\circ = \cone Q = $.
%
%
%Therefore, since $P$ and $-P^\circ$ are cones and $Q^\circ = - Q$, we have the following inclusions
%\[
%P = \rec P \subseteq \rec Q = -\rec (Q^\circ) \subseteq -\rec (P^\circ)
%= - P^\circ,\]
%where the first inclusion follows from 
%\cite{RT97}.
%In conclusion, $\hat \stdCone \coloneqq \rec Q$ is a self-dual cone squeezed between $\stdCone$ and $\stdCone^*$.
\end{proof}

We can now prove the following. 
\begin{proposition} \label{prop:proj_extreme}
Let $A \in \dnncone{n}$ have rank $d$. Then, there exists a slack matrix $B \in \dnncone{m}$ of a self-dual polyhedral cone in $\Re^d$ and a nonnegative matrix $M \in \Re^{n\times m}$ such that $A = MBM^\T$. 
\end{proposition}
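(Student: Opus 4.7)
The plan is to factor $A = XX^\T$, observe that the polyhedral cone generated by the rows of $X$ sits inside its Euclidean dual, squeeze a self-dual polyhedral cone between them via Proposition~\ref{prop:self_squeeze}, and then read off $M$ and $B$ from that containment.

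First, since $A \in \dnncone{n}$ has rank $d$, I would write $A = XX^\T$ with $X \in \Re^{n\times d}$ of rank $d$, and denote its rows by $x_1,\dots,x_n$. Set $\stdCone' \coloneqq \coner(X)$; since $\rank(X) = d$, this is a full-dimensional polyhedral cone in $\Re^d$. The nonnegativity of $A$ gives $\inProd{x_i}{x_j} \geq 0$ for all $i,j$, so any $v \in \stdCone'$ (a nonnegative combination of the $x_i$'s) has nonnegative inner product with each $x_i$, yielding $\stdCone' \subseteq (\stdCone')^*_{\euc}$.

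Second, I would verify that $\stdCone'$ is pointed, which is required by Proposition~\ref{prop:self_squeeze}. If $v$ and $-v$ both lie in $\stdCone'$, then $v = \sum_i \alpha_i x_i$ and $-v = \sum_i \beta_i x_i$ with $\alpha_i,\beta_i \geq 0$, and expanding
\[
0 = \Bigl\|\sum_i (\alpha_i + \beta_i) x_i\Bigr\|^2 = \sum_{i,j}(\alpha_i+\beta_i)(\alpha_j+\beta_j)\inProd{x_i}{x_j}
\]
as a sum of nonnegative terms forces each $(\alpha_i+\beta_i)\|x_i\|^2 = 0$; thus for every $x_i \neq 0$ one has $\alpha_i = \beta_i = 0$, so $v = 0$.

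Assuming $d\geq 2$, Proposition~\ref{prop:self_squeeze} now produces a polyhedral cone $\hat\stdCone \subseteq \Re^d$ with $\hat\stdCone = \hat\stdCone^*_{\euc}$ and $\stdCone' \subseteq \hat\stdCone \subseteq (\stdCone')^*_{\euc}$ (the case $d=1$ is trivial, since $A = xx^\T$ with $x \geq 0$ and one may take $B = (1)$, $M = x$). Let $y_1,\dots,y_m$ be generators of the distinct extreme rays of $\hat\stdCone$, assembled as rows of $Y \in \Re^{m\times d}$, and set $B \coloneqq YY^\T$. Self-duality of $\hat\stdCone$ under the Euclidean inner product means the $y_j$ also generate the extreme rays of $\hat\stdCone^*_{\euc}$, so $B$ is a slack matrix of $\hat\stdCone$ in the sense of Definition~\ref{def:slack}; it is PSD by construction and nonnegative since $\inProd{y_i}{y_j}\geq 0$, hence $B \in \dnncone{m}$. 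Finally, $\stdCone' \subseteq \hat\stdCone$ lets me expand each $x_i = \sum_j M_{ij}y_j$ with $M_{ij} \geq 0$; collecting the coefficients into $M \in \Re^{n\times m}_+$ gives $X = MY$ and therefore $A = XX^\T = M(YY^\T)M^\T = MBM^\T$.

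The main obstacle is establishing the two structural properties of $\stdCone'$ (pointedness and $\stdCone' \subseteq (\stdCone')^*_{\euc}$) that feed into Proposition~\ref{prop:self_squeeze}; both follow from a careful use of $A \in \dnncone{n}$, after which the squeezing lemma does the real geometric work and the rest is bookkeeping.
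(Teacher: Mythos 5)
Your proof is correct and follows essentially the same path as the paper's: factor $A = XX^\T$, show the cone generated by the rows of $X$ is pointed, full-dimensional, and contained in its Euclidean dual, squeeze in a self-dual cone via Proposition~\ref{prop:self_squeeze}, and obtain $M$ from the inclusion $\stdCone' \subseteq \hat\stdCone$. The only cosmetic differences are your direct sum-of-squares argument for pointedness (the paper instead appeals to the identity $\stdCone \cap -\stdCone = (\stdCone^*_{\euc})^\perp$) and your explicit handling of the $d=1$ case, which the paper leaves implicit even though Proposition~\ref{prop:self_squeeze} is stated for $d\geq 2$.
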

\begin{proof}
$A$ can be written as $A = XX^\T$, where $X \in \Re^{n\times d}$ and has rank $d$. 
Let $\stdCone = \coner(X)$, which is a cone in $\Re^d$. Since $A$  is nonnegative, we have $\stdCone \subseteq \stdCone^*_{\euc}$. Furthermore, $\stdCone$ is full-dimensional since $X$ has rank $d$. 
It must also be pointed, since the lineality space $\stdCone \cap -\stdCone$ is  equal to $\stdCone^{*\perp}_{\euc}$ (see \eqref{eq:lineality}) and $\stdCone^*_{\euc}$ contains $\stdCone$.

By Proposition~\ref{prop:self_squeeze}, there exists a self-dual polyhedral cone $\hat \stdCone$ such that $\stdCone \subseteq \hat \stdCone \subseteq \stdCone^*_{\euc}$.
Let $B \in \dnncone{m}$ be a PSD slack of $\hat \stdCone$, which exists by Theorem~\ref{thm:self_psd}. We may assume without loss of generality that the inner product under consideration is the usual Euclidean one so that $B = YY^\T$ for some $Y \in \Re^{m\times d}$ and the rows of $Y$ are the extreme rays of the cone $\hat \stdCone$.

Then, the condition $\stdCone \subseteq \hat \stdCone$ tells us that each row of $X$ is a nonnegative linear combination of rows of $Y$. That is, there exists a nonnegative matrix $M \in \Re^{n\times m}$ such that $MY = X$. Therefore
\[
A = MYY^\T M^\T = MBM^\T.
\]
\end{proof}

\begin{example}
Let us revisit Example \ref{ex:ext_not_slack} in light of Proposition~\ref{prop:proj_extreme}. Since the matrix $A$ of $\dnncone{7}$ presented there has rank $4$, according to Proposition \ref{prop:proj_extreme}, there must be some nonnegative matrix $M$ and a slack matrix $B$ such that $A=MBM^\T$.

That can be seen to be the case, for 
\[B=\left(
\begin{array}{cccccccc}
 2 & 1 & 0 & 0 & 2 & 2 & 0 & 2 \\
 1 & 2 & 1 & 0 & 0 & 1 & 0 & 0 \\
 0 & 1 & 2 & 2 & 0 & 0 & 2 & 0 \\
 0 & 0 & 2 & 3 & 1 & 0 & 3 & 1 \\
 2 & 0 & 0 & 1 & 12 & 8 & 4 & 0 \\
 2 & 1 & 0 & 0 & 8 & 6 & 2 & 0 \\
 0 & 0 & 2 & 3 & 4 & 2 & 4 & 0 \\
 2 & 0 & 0 & 1 & 0 & 0 & 0 & 4 \\
\end{array}
\right) \ \ \ M=
\left(\begin{array}{cccccccc}
 1 & 0 & 0 & 0 & 0 & 0 & 0 & 0 \\
 0 & 1 & 0 & 0 & 0 & 0 & 0 & 0 \\
 0 & 0 & 1 & 0 & 0 & 0 & 0 & 0 \\
 0 & 0 & 0 & 1 & 0 & 0 & 0 & 0 \\
 0 & 0 & 0 & 0 & \frac{1}{4} & 0 & 0 & \frac{3}{4} \\
 0 & 0 & 0 & 0 & 0 & 0 & 1 & 0 \\
 0 & 0 & 0 & 0 & 0 & 0 & 0 & 1 \\
\end{array}\right).\]

Graphically, this matrix $B$ is the slack matrix of a self-dual polyhedral cone with $8$ rays that lies sandwiched between  the cone of the rows of $X$ and its dual seen in Example \ref{ex:ext_not_slack}. Slicing the cones as done in that example we obtain the inclusions shown in Figure \ref{fig:polar_sandwich}, where the red section of the cone of the rows is included in the blue section of an autodual cone.

\begin{figure}
\begin{center}
\includegraphics[width=8cm]{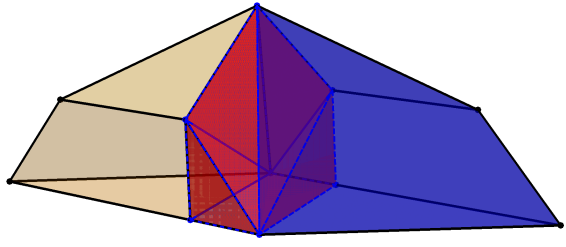}
\end{center}
\caption{A negatively selfpolar polytope $Q$ sandwiched between a polytope $P$ in red included in its negative polar in yellow.} \label{fig:polar_sandwich}
\end{figure}

The proof of Proposition \ref{prop:self_squeeze} in the paper \cite{MR4239237} is completely constructive, so we can actually construct these sandwiched cones explicitly, and therefore automatically construct the matrices $M$ and $B$. In practice, however, there seem to be simpler ways of finding ad hoc contructions for simple examples of extreme rays of $\dnncone{n}$ that are not slacks.

\end{example}

\subsection{Generating DNN matrices that are neither completely positive nor completly positive semidefinite}
PSD slack of self-dual polyhedral cones are doubly nonnegative. It is natural to wonder whether those matrices are, in fact, completely positive.
 Since $\cppcone{n} \subseteq \dnncone{n}$, a necessary condition for an extreme ray of  $\dnncone{n}$ to belong to $\cppcone{n}$ is that it must be an extreme ray of $\cppcone{n}$ as well. 
However, all the extreme rays of $\cppcone{n}$ have rank $1$.
On the other hand, a slack matrix of a pointed full-dimensional polyhedral cone  $\stdCone \subseteq \Re^d$ has rank $d$.

In view of these facts,
 Theorem~\ref{theo:dnn_ext} implies 
 that positive semidefinite slack matrices of irreducible self-dual polyhedral cones of dimension $d \geq 3$ are doubly nonnegative matrices that are not completely positive. 
 That is, if we are able to generate irreducible self-dual polyhedral cones, we can use them to construct families of non-completely positive matrices that are doubly nonnegative.

An interesting generalization of the completely positive cone $\cppcone{n}$ is the completely positive semidefinite cone $\cscone{n}$. This is the cone of all $n \times n$ matrices $M$ such that there exist positive semidefinite matrices $A_1,...,A_n$ such that $M_{i,j}=\langle A_i, A_j \rangle$ for all $i,j$, where $\inProd{\cdot}{\cdot}$ is the Frobenius inner product, which is given by $\inProd{X}{Y} = \tr(XY)$, for $X,Y \in \S^n$. Note that there is no prescription on the size of the $A_i$.

The basic properties of this cone can be found in \cite{laurent2015conic}, with other relevant work in \cite{gribling2017matrices, prakash2018completely, AKN20, frenkel2014vector}. This is a notoriously hard cone to handle that plays a role in some quantum games and quantum information literature. We have the following inclusions
\[
\cppcone{n} \subseteq \cscone{n} \subseteq \dnncone{n}.
\] 
Furthermore, for $n \geq 5$ the inclusions are strict \cite{laurent2015conic, frenkel2014vector}. %Moreover, it is not known if $\cscone{n}$ is closed for $n \geq 5$, but the inclusions are strict even in for its closure.

The remainder of this subsection will be focused on proving that 
non-diagonal PSD slacks of polyhedral cones are also not completely positive semidefinite, thus significantly strengthening our previous observation that the PSD slacks of irreducible polyhedral cones are are not completely positive.
In order to do so, we will need a few preliminary results.

\paragraph{Completely positive semidefinite slacks and slices of the PSD cone}
Our analysis starts with the observation that if a PSD slack $S$ of a self-dual cone $\stdCone$ is completely positive semidefinite, then the matrices that appear in the decomposition of $S$ can be used to construct a slice of the PSD cone that is isomorphic to $\stdCone$.
In what follows we say that a (not-necessarily full-dimensional) cone $\stdCone$ is \emph{self-dual on its span} if there is some inner product under which 
$\stdCone = (\stdCone^*)\cap \spanVec\stdCone$.
 
\begin{lemma}\label{lem:psd_slice}
Let $S$ be a PSD slack matrix of a self-dual polyhedral cone $\stdCone \subseteq \Re^d$. Suppose that $S$ is completely positive semidefinite and let $A_1,\ldots, A_n \in \psdcone{k}$ be such that 
$S _{ij} = \inProd{A_i}{A_j}$ holds for all $i,j$.
Let $\bar \stdCone$ be the convex cone generated by the $A_i$, i.e., $\bar \stdCone = \{\sum _{i=1}^n \alpha _i A_i \mid \alpha_{i} \geq 0, \forall i =1,\ldots, n\}$.
 Let $H$ be the span of the $A_i$. 
Then, the following statements hold.
\begin{enumerate}[$(i)$]
	\item\label{lem:psd_slice:1}  $\stdCone$ and $\bar \stdCone$ are linearly isomorphic.
	\item\label{lem:psd_slice:2} $\bar \stdCone$ is self-dual on its span with respect the Frobenius inner product.
	\item\label{lem:psd_slice:3} $\bar \stdCone = \psdcone{k} \cap H$ holds.

\end{enumerate}
\end{lemma}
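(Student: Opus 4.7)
The plan hinges on exploiting the two factorizations of $S$ available to us: the canonical one coming from $\stdCone$ being self-dual (Theorem~\ref{thm:self_psd}), and the Gram factorization $S_{ij}=\inProd{A_i}{A_j}$ guaranteed by complete positive semidefiniteness. A quick dimension count will be useful throughout: $\dim H = \rank S = \dim \stdCone = d$, where the first equality holds because the rank of a Gram matrix equals the dimension of the span of its vectors, and the second because slack matrices of pointed full-dimensional polyhedral cones have rank equal to the dimension of the cone (\cite[Lemma~13]{slack}).

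For item~\ref{lem:psd_slice:1}, I would fix an orthonormal basis $E_1,\ldots,E_d$ of $H$ with respect to the Frobenius inner product and form the matrix $Y\in\Re^{n\times d}$ with entries $Y_{il}=\inProd{A_i}{E_l}$. Then $YY^\T=S$ and $\rank Y=d$, so Theorem~\ref{theo:slack} gives that $\stdCone$ is linearly isomorphic to $\coner(Y)$. The linear map $\psi\colon\Re^d\to H$ sending the standard basis vectors to $E_1,\ldots,E_d$ is by construction an isometry from $\Re^d$ with the Euclidean inner product to $H$ with the Frobenius inner product, and it sends the $i$-th row of $Y$ to $A_i$. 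Hence $\psi(\coner(Y))=\bar{\stdCone}$, and composing isomorphisms yields $\stdCone\cong\bar{\stdCone}$.

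For item~\ref{lem:psd_slice:2}, Theorem~\ref{theo:slack} also gives $(\coner(Y))^*_{\euc}=\conec(Y^\T)=\coner(Y)$, so $\coner(Y)$ is self-dual in $\Re^d$ under the Euclidean inner product. The isometry $\psi$ transports this self-duality to $H$ with the Frobenius inner product: for $Z\in H$, one has $\inProd{Z}{W}\geq 0$ for all $W\in\bar{\stdCone}$ if and only if $\psi^{-1}(Z)^\T\psi^{-1}(W)\geq 0$ for all $\psi^{-1}(W)\in\coner(Y)$, so the Frobenius dual of $\bar{\stdCone}$ intersected with $H$ coincides with $\bar{\stdCone}$, which is the definition of being self-dual on its span.

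For item~\ref{lem:psd_slice:3}, the inclusion $\bar{\stdCone}\subseteq\psdcone{k}\cap H$ is immediate since each generator $A_i$ is PSD and lies in $H$. For the reverse, take $B\in\psdcone{k}\cap H$; for any $A=\sum_i\alpha_i A_i\in\bar{\stdCone}$ with $\alpha_i\geq 0$, the matrix $A$ is PSD, so $\inProd{B}{A}=\tr(BA)\geq 0$. Thus $B$ lies in the Frobenius dual of $\bar{\stdCone}$ intersected with $H$, which by item~\ref{lem:psd_slice:2} equals $\bar{\stdCone}$. No step looks technically delicate; the only care needed is to choose an \emph{orthonormal} basis of $H$ so that the identification $\psi$ is isometric, which is what lets self-duality transfer from Euclidean to Frobenius essentially for free.
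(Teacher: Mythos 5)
Your proof is correct and is a genuinely cleaner route than the paper's. The paper first reduces (via Theorem~\ref{thm:self_psd}) to the case where $\stdCone$ is Euclidean-self-dual and $S = VV^\T$ with the rows of $V$ being extreme-ray generators, then constructs the isomorphism $\varphi(\sum\alpha_i v^i)=\sum\alpha_i A_i$ by hand, checking well-definedness by a direct computation involving $(\alpha-\beta)V=0\Rightarrow(\alpha-\beta)S=0$, and proves item~\ref{lem:psd_slice:2} by pulling dual elements back through $\varphi$ and invoking self-duality of $\stdCone$. You instead fix an orthonormal basis of $H$, observe that $Y_{il}=\inProd{A_i}{E_l}$ gives a rank-$d$ factorization $S=YY^\T$ (Parseval), and then apply Theorem~\ref{theo:slack} once to get \emph{both} $\stdCone\cong\coner(Y)$ and $(\coner(Y))^*_{\euc}=\coner(Y)$, after which the isometry $\psi:\Re^d\to H$ transports everything. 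This sidesteps the well-definedness check entirely (the map $\psi$ is an honest linear isomorphism from the start, not a map defined on generators) and makes the transfer of self-duality a one-line consequence of $\psi$ being an isometry, rather than an argument requiring $\stdCone$'s self-duality directly. Item~\ref{lem:psd_slice:3} is handled the same way in both. The only detail worth making fully explicit is that $\rank Y=d$ follows from $\rank S=d$ (Lemma~13 of \cite{slack}) together with $Y$ having $d$ columns, which you note via the dimension count; with that, the hypotheses of Lemma~\ref{lem:iso} underlying Theorem~\ref{theo:slack} are met.
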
 
\begin{proof}
In view of Theorem~\ref{thm:self_psd},  $\stdCone$ is isomorphic to a polyhedral cone that is self-dual with respect the usual Euclidean inner product. Isomorphic cones share the same slack matrices (Proposition~\ref{prop:slack_inv}), so for the purposes of this proof, we may assume without loss of generality that $\stdCone$ is self-dual with respect the Euclidean inner product and $S = VV^\T$, where each row of $V \in \Re^{n\times d}$ generates a distinct extreme ray of $\stdCone$. Denote the $i$-th row of $V$ by $v^{i}$. We define the map $\varphi: \Re^d \to H$ such that 
\begin{equation}\label{eq:iso}
\varphi\left(\sum _{i=1}^n \alpha _i v^i\right) \coloneqq \sum _{i=1}^n \alpha _i A_i.
\end{equation}
This map is well-defined. In fact, if $\sum _{i=1}^n \alpha _i v^i = \sum _{i=1}^n \beta _i v^i$, this means $(\alpha-\beta)V=0$. Multiplying by $V^T$ we obtain
$(\alpha-\beta)S=0$. Since $S _{ij} = \inProd{A_i}{A_j}$, this translates to
\[
\inProd{\sum _{i=1}^n (\alpha _i-\beta_i) A_i}{A_j}=0, \quad \textup{for all } j.
\]
Since the $A_j$ span $H$, this means $\varphi(\alpha V)-\varphi(\beta V)=\sum _{i=1}^n (\alpha _i-\beta_i) A_i=0$ so $\varphi$ is indeed well-defined. Next, we verify that $\varphi$ is injective and, hence, a bijection to its image $H$. Just note that $\varphi (\alpha V) = 0$ means $\inProd{\sum \alpha_i A_i}{A_j} = 0$  for all $j$, hence $\inProd{\sum \alpha_i v^i}{v^j} = 0$ and since the $v_j$ span $\Re^d$ we have $\sum \alpha_i v^i =0$.
%Similarly to the proof of Lemma~\ref{lem:iso}, the fact that $S_{ij} = \inProd{v^i}{v^j} = \inProd{A_i}{A_j}$ holds for every $i,j$ implies that $\varphi$ is indeed well-defined and $\varphi$ must be bijective\footnote{The key observation is that $x,y \in \Re^d$ satisfy $x = y$ if and only if $\inProd{x}{v^i} = \inProd{y}{v^i}$ holds for all $i$. Similarly $X,Y \in H$ satisfy $X = Y$ if and only if $\inProd{X}{A_i} = \inProd{Y}{A_i}$ holds for all $i$. Therefore, if $\sum \alpha_i v^i = \sum \beta _i v^i$, we have $\inProd{\sum \alpha_i v^i}{v^j} = \inProd{\sum \beta_i v^i}{v^j}$ for all $j$. Then the condition $\inProd{v^i}{v^j} = \inProd{A_i}{A_j}$ for all $i,j$ ensures that $\sum \alpha_i v^i$ and $\sum \beta _i v^i$ are mapped to the same element by $\varphi$, so $\varphi$ is well-defined.
%In order to prove that $\varphi$ is injective, we observe that $\sum \alpha_i A_i = 0$ implies $\inProd{\sum \alpha_i A_i}{A_j} = 0$ for all $j$, so  $\inProd{\sum \alpha_i v^i}{v^j} = 0$ also holds for all $j$. Since the $v^j$ span $\Re^d$, this implies that $\sum \alpha_i v^i = 0$. }.
We conclude that $\varphi(\stdCone) = \bar{\stdCone}$ and $\stdCone$ is linearly isomorphic to $\bar{\stdCone}$, which proves item~\ref{lem:psd_slice:1}.

We move on to item~\ref{lem:psd_slice:2}.
The span of $\bar \stdCone$ is $H$, so in order to show that 
$\bar \stdCone$ is self-dual on its span, we will check that 
$\bar \stdCone = \bar \stdCone^* \cap H$, where $\bar{\stdCone}^*$ is computed with respect the Frobenius inner product. 

The $A_i$ are PSD matrices so $\inProd{A_i}{A_j} \geq 0$ holds for all $i,j$. This leads to the inclusion $\bar \stdCone \subseteq \bar \stdCone^* \cap H$.
Conversely, let $B \in \bar \stdCone^* \cap H$.
In particular, $B$ is a linear combination of the $A_i$, i.e., $ B = \sum _{i=1}^{n} \alpha _i A_{i} $.
%Then, since $B$ generates an extreme ray, $\bar \stdCone \cap \{B\}^\perp$ is a facet (i.e., $(d-1)$-dimensional face) of $\bar \stdCone$ and $B$ is orthogonal to at least $(d-1)$ linearly independent matrices among the $A_i$.
%Reordering the $A_i$ we may assume without loss of generality that $A_{1}, \ldots, A_{d-1}$ are linearly independent and 
%\begin{equation}\label{eq:psd_self_dual}
%0 = \inProd{B}{A_i} = \sum _{j=1}^{n} \alpha_{j}\inProd{A_j}{A_i},
%\end{equation}
%for $i  \in \{1,\ldots, d-1\}$.
Now, let $b \coloneqq \sum _{i=1}^n \alpha _i v^{i}$. 
We recall that $S = VV^{\T}$ and $S_{ij} =  \inProd{A_i}{A_j} = \inProd{v^i}{v^j}$. Therefore, since $B \in \bar{\stdCone}^* \cap H$, we have $b \in \stdCone^*$.
Since $\stdCone$ is self-dual, we have $b \in \stdCone$ and there are nonnegative $\beta _{i}$'s such that 
\[
b = \sum _{i=1}^n \alpha _i v^{i} = \sum _{i=1}^n \beta _i v^{i}. 
\]
By the definition of $\varphi$ (see \eqref{eq:iso}), we have $\varphi(b) = B$ and $\varphi(v^i) = A_i$ for all $i$, so 
\[
B = \sum _{i=1}^n \alpha _i A^{i} = \sum _{i=1}^n \beta _i A^{i},
\]
which shows that $B \in \bar \stdCone$ and concludes item~\ref{lem:psd_slice:2}.

For item~\ref{lem:psd_slice:3}, we first observe that since the $A_i$ are PSD matrices, we have the inclusion  $\bar \stdCone \subseteq \psdcone{k} \cap H$.
Conversely, if $X \in  \psdcone{k} \cap H$, since $X$ and the $A_i$ are PSD we have $\inProd{X}{A_i} \geq 0$ for every $i$, which implies that $X \in \bar \stdCone^* \cap H$. By item~\ref{lem:psd_slice:2}, $\bar \stdCone^* \cap H = \bar \stdCone$, so $X \in \bar \stdCone$.
\end{proof}

Therefore, in order for a PSD slack matrix to be completely positive semidefinite, 
the underlying cone must be linearly isomorphic to a linear slice of a positive semidefinite cone, self-dual with respect to its span. Note that every polyhedral cone can be written as a linear slice of a positive semidefinite cone in many different ways, so the imposition of self-duality with respect to the matrix inner product is a key requirement. 
For more on the question of deciding whether a slice of the positive semidefinite cone is polyhedral see, for instance, \cite{bhardwaj2015deciding}.

Since $\psdcone{k}$ is self-dual, Lemma~\ref{lem:psd_slice} is related to  \cite[Theorem 4]{BF76}, that characterizes codimension one self-dual slices of a self-dual cone $\stdCone$ as those for which $\stdCone \cap H = \pi_H(\stdCone)$, where $\pi_H$ is the orthogonal projection onto $H$ with respect the underlying inner product. In fact codimension $1$ does not really matter, and we can get rid of it. 
For the sake of self-containment, below we  show the generalized version, but it also follows  from standard results such as \cite[Corollary 16.3.2]{RT97}.

\begin{theorem} \label{thm:selfdualslice}
	Let $\stdCone \subseteq \Re^d$ be a self-dual cone with respect  to some inner product $\inProd{\cdot}{\cdot}$ and let $H \subseteq \Re^d$ be a subspace. Then, the following statements are equivalent.
	\begin{enumerate}[$(i)$]
		\item\label{thm:selfdualslice:1} The span of $\stdCone \cap H$ is $H$ and $\stdCone \cap H$ is self-dual on its span with respect to $\inProd{\cdot}{\cdot}$.
		\item\label{thm:selfdualslice:2} $\pi_{H}(\stdCone) = \stdCone \cap H$, where the orthogonal projection is computed with respect to $\inProd{\cdot}{\cdot}$.
	\end{enumerate}
%	 $\stdCone \cap H$ is self-dual in $H$ with respect to $\inProd{\cdot}{\cdot}$  if and only if $\stdCone \cap H = \pi_H(\stdCone)$.
\end{theorem}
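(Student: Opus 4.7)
The plan is to reduce both conditions to a single duality formula relating the slice $\stdCone \cap H$ to the projection $\pi_H(\stdCone^{*})$. Concretely, I would first establish the identity
\[
(\stdCone \cap H)^{*} \cap H \;=\; \pi_{H}(\stdCone^{*}),
\]
where the dual on the left is computed in $\Re^{d}$ with the fixed inner product $\inProd{\cdot}{\cdot}$. Using self-duality $\stdCone^{*}=\stdCone$, this specializes to $(\stdCone \cap H)^{*} \cap H = \pi_{H}(\stdCone)$, after which both implications reduce to essentially one-line substitutions.

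For the $\supseteq$ inclusion in the identity I would exploit that $\pi_H$ is self-adjoint and fixes every element of $H$: for $z \in \stdCone^{*}$ and $x \in \stdCone \cap H$, $\inProd{x}{\pi_H(z)} = \inProd{\pi_H(x)}{z} = \inProd{x}{z} \geq 0$, and $\pi_H(z) \in H$ by definition. For the $\subseteq$ inclusion, I would argue by separation: if some $y \in (\stdCone \cap H)^{*}\cap H$ failed to lie in the closed convex cone $\pi_H(\stdCone^{*})$, a separating vector $w \in H$ would satisfy $\inProd{w}{y}<0$ together with $\inProd{w}{\pi_H(z)}=\inProd{w}{z} \geq 0$ for every $z \in \stdCone^{*}$; this places $w$ in $\stdCone^{**}=\stdCone$, hence in $\stdCone \cap H$, contradicting the choice of $y$.

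With the identity in hand, \ref{thm:selfdualslice:1} $\Rightarrow$ \ref{thm:selfdualslice:2} is immediate: the self-dual-on-span hypothesis reads $\stdCone \cap H = (\stdCone \cap H)^{*}\cap H$, and the right-hand side equals $\pi_H(\stdCone)$ by the identity. For \ref{thm:selfdualslice:2} $\Rightarrow$ \ref{thm:selfdualslice:1}, the same substitution gives $\stdCone \cap H = (\stdCone \cap H)^{*}\cap H$, i.e., self-duality on its span, so it only remains to check that $\stdCone \cap H$ does span $H$. I would deduce this from $\stdCone$ being full-dimensional: self-duality forces $\stdCone \cap (-\stdCone) = (\stdCone^{*})^{\perp} = \stdCone^{\perp}$ via \eqref{eq:lineality}, and the only vector orthogonal to itself is $0$, so $\stdCone^{\perp}=\{0\}$ and $\spanVec \stdCone = \Re^{d}$. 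Then $\pi_H(\stdCone)$ has nonempty interior in $H$ as the image of the open set $\reInt\stdCone$ under the open map $\pi_H$, and by \ref{thm:selfdualslice:2} the same holds for $\stdCone \cap H$.

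The main obstacle is making the separation argument rigorous, since it requires $\pi_H(\stdCone^{*})$ to be closed; the same closure is implicitly used in the standard dualization $(\stdCone \cap H)^{*} = \stdCone^{*}+H^{\perp}$ without a closure operation. In the polyhedral setting this is automatic, since $\stdCone^{*}+H^{\perp}$ is itself polyhedral; in the fully general setting one appeals to a constraint qualification as in Rockafellar's Corollary~16.3.2 \cite{RT97}, consistent with the text's remark that the theorem also follows from such general machinery.
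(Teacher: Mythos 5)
Your proposal is correct in spirit but takes a genuinely different route from the paper. The paper argues each implication directly: in both directions one takes an element of the appropriate set, pairs it against a generic element of the other, and uses the self-adjointness of $\pi_H$ and the fact that $\pi_H$ fixes $H$ to shuttle the projection across the inner product. No separation argument or closure considerations ever arise. You instead isolate a free-standing duality identity, namely $(\stdCone \cap H)^{*}\cap H = \pi_H(\stdCone^{*})$ (essentially $(\stdCone\cap H)^{*}=\operatorname{cl}(\stdCone^{*}+H^{\perp})$ intersected with $H$), and reduce both implications to substitutions. Your argument for the $\supseteq$ inclusion matches the paper's key computation; your $\subseteq$ inclusion goes via a separation argument and is genuinely extra machinery.

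The obstacle you flag is real but you overstate its severity, and the resolution you suggest (polyhedrality, or a constraint qualification \`a la Rockafellar) is unnecessary — which matters, since the theorem is stated for arbitrary closed convex cones and is applied to $\psdcone{k}$. The separation argument cleanly yields $(\stdCone \cap H)^{*}\cap H = \overline{\pi_H(\stdCone^{*})}$, and with $\stdCone^{*}=\stdCone$ this is enough. Under \ref{thm:selfdualslice:2}, $\pi_H(\stdCone)=\stdCone\cap H$ is closed, so the identity specializes to $(\stdCone\cap H)^{*}\cap H = \stdCone\cap H$ with no further hypothesis. Under \ref{thm:selfdualslice:1}, the identity gives $\overline{\pi_H(\stdCone)}=\stdCone\cap H$; combining with the trivial inclusion $\stdCone\cap H = \pi_H(\stdCone\cap H)\subseteq \pi_H(\stdCone)\subseteq\overline{\pi_H(\stdCone)}$ forces $\pi_H(\stdCone)=\stdCone\cap H$. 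So your route is self-contained after all, just slightly heavier than the paper's. A smaller remark: the span argument via openness of $\pi_H$ works but is indirect; once $\spanVec\stdCone=\Re^d$ is established, $\spanVec(\stdCone\cap H)=\spanVec(\pi_H(\stdCone))=\pi_H(\spanVec\stdCone)=H$ is immediate.
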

\begin{proof}
\fbox{\ref{thm:selfdualslice:1}$\Rightarrow$\ref{thm:selfdualslice:2}}	
   Since $\pi_{H} (\stdCone \cap H) = \stdCone \cap H$ we obtain the inclusion 
   $\stdCone \cap H \subseteq \pi_{H}(\stdCone)$.
   Conversely, suppose that $y$ is such that $y=\pi_H(x)$ for some $x \in \stdCone$. For every $z \in \stdCone \cap H$ we have $\langle z,y \rangle = \langle z,x \rangle$, and that is nonnegative by the self-duality of $\stdCone$. Therefore, $y$ belongs to the dual of $\stdCone \cap H$ and to $H$ (which is the span of $\stdCone \cap H$), hence belongs to $\stdCone \cap H$ since $\stdCone \cap H$ is self-dual on its span.

\fbox{\ref{thm:selfdualslice:2}$\Rightarrow$\ref{thm:selfdualslice:1}}	
First, we observe that
$\stdCone^\perp \subseteq \stdCone ^*$ always holds, so 
 the condition $\stdCone = \stdCone^*$ implies that $\spanVec \stdCone = \Re^d$.
Then, since  $\stdCone \cap H = \pi_H(\stdCone)$ and $\pi_{H}$ is a linear map, we have $\spanVec (\stdCone \cap H) = \pi _H(\spanVec \stdCone) = \pi _H(\Re^d) = H$.
	
	Next, the fact that $\stdCone \cap H$ is contained in its dual follows from the self-duality of $\stdCone$, so we only have to prove that $(\stdCone \cap H)^* \cap H \subseteq \stdCone \cap H$.
So, suppose that $z \in (\stdCone \cap H)^* \cap H $ and let $x \in \stdCone$. 
Then, since $\stdCone \cap H = \pi_H(\stdCone)$,
\[
0 \leq \inProd{z}{\pi_H(x)} = \inProd{\pi_H(z)}{x} = \inProd{z}{x},
\]
which shows that $z \in \stdCone^*$ and, therefore, $z \in \stdCone$ by the self-duality of $\stdCone$.  In conclusion,  $z \in \stdCone \cap H $ as intended.
\end{proof}

\paragraph{Projectional exposedness}
Next, we need a detour on the notion of \emph{projectional exposedness}. 
This notion has its origin in \cite{borwein1981regularizing} and was studied in \cite{barker1987projectionally, poole1988projectionally, sung1990study,LRS20}.  A cone $\stdCone$ is said to be
\emph{projectionally exposed} if for every face $F$ of $\stdCone$, there exists a linear projection $P_F$, i.e., a linear map (not necessarily self-adjoint) satisfying $P_F^2 = P_F$  such that $P_F\stdCone = F$.
If all those projections can be taken to be orthogonal, then the cone is said to be \emph{orthogonally projectionally exposed} or o.p.-exposed.
Examples of such cones include all symmetric cones \cite[Proposition 33]{lourencco2021amenable} of which $\RR_+^k$ and $\psdcone{k}$ are particular examples.  

Projectionally exposed cones are not only facially exposed \cite[Corollary~4.4]{sung1990study} but also satisfy a condition called \emph{amenability} \cite[Proposition~9]{lourencco2021amenable}. Amenability and projectional exposedness coincide for cones of dimension four or less \cite[Corollary~6.4]{LRS20} and, as of this writing, it is an open problem to exhibit an amenable cone that is not projectionally exposed, see \cite{LRS20}.

In any case, it turns out that projectional exposedness  interacts nicely with self-duality. A caveat is that while the notion of projection does not depend on the underlying inner product, that is not the case for orthogonal projections, i.e., a projection may become orthogonal under a suitable inner product. 
To avoid confusion, we clarify that in the definition of o.p-exposedness, the inner product is fixed for all the faces. That is, in order for a cone $\stdCone$ to be o.p-exposed, there must be an inner product $\inProd{\cdot}{\cdot}$ depending on $\stdCone$ only, under which  all faces $F$ of $\stdCone$ satisfy $\pi(\stdCone) = F$, where $\pi$ depends on $F$ and is an orthogonal projection computed with respect to $\inProd{\cdot}{\cdot}$.
With this in mind, in what follows we will be explicit and emphasize the choice of inner product. 
\begin{proposition}\label{prop:opexposed}
	Let $\stdCone \subseteq \Re^d$ be a closed convex cone which is  o.p.-exposed and self-dual with respect the same inner product $\inProd{\cdot}{\cdot}$. If $H \subseteq \Re^d$ is a subspace  such that the span of $\stdCone \cap H$ is $H$ and $\stdCone \cap H$ is self-dual on its span under $\inProd{\cdot}{\cdot}$  then $\stdCone \cap H$ is  o.p.-exposed with respect to $\inProd{\cdot}{\cdot}$.
\end{proposition}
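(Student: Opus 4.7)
My plan is to reduce the proposition to showing that every face $F$ of $\stdCone \cap H$ is self-dual on $\spanVec F$ w.r.t.\ $\inProd{\cdot}{\cdot}$. Since an orthogonal projection is uniquely determined by its image, exhibiting an o.p.\ $P$ with $P(\stdCone \cap H) = F$ is equivalent to $\pi_{\spanVec F}(\stdCone \cap H) = F$, and by Theorem~\ref{thm:selfdualslice} applied to the self-dual cone $\stdCone \cap H$ on $H$ and the subspace $\spanVec F$, this identity is equivalent to $F$ being self-dual on $\spanVec F$.

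I would first establish a preliminary lemma: for any self-dual and o.p.-exposed cone $\stdCone'$ under the same inner product, every face $G$ of $\stdCone'$ is self-dual on $\spanVec G$ and the orthogonal projection $\pi_{\spanVec G}$ satisfies $\pi_{\spanVec G}(\stdCone') = G$. The projection identity follows from the uniqueness of orthogonal projections given their image: composing the o.p.\ exposing $G$ with $\pi_{\spanVec G}$ returns $\pi_{\spanVec G}$ itself while preserving the image $G$. Self-duality of $G$ on $\spanVec G$ then follows from the standard computation $\inProd{y}{x} = \inProd{y}{\pi_{\spanVec G}(x)} \geq 0$ for $y \in G^*_{\spanVec G}$ and $x \in \stdCone'$, which places $y$ in $\stdCone' \cap \spanVec G = G$. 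A corollary is that each face of an o.p.-exposed cone is o.p.-exposed on its span by restricting the exposing projections.

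Given a face $F$ of $\stdCone \cap H$, let $\hat F$ denote the smallest face of $\stdCone$ containing $F$. A relative-interior argument---picking $w \in \reInt F \subseteq \reInt \hat F$, and observing that for $y \in \hat F \cap H$ and small $\epsilon > 0$ both $w - \epsilon y$ and $\epsilon y$ lie in $\stdCone \cap H$ with sum $w \in F$, so the face property of $F$ forces $y \in F$---yields $F = \hat F \cap H$ and, analogously using that $\reInt \hat F$ is open in $\spanVec \hat F$, $\spanVec F = H \cap \spanVec \hat F$. When $\hat F = \stdCone$, the same argument produces $F = \stdCone \cap H$ and self-duality of $F$ on $\spanVec F = H$ is immediate from the hypothesis.

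The main obstacle is the remaining case $\hat F \subsetneq \stdCone$, which I would address by induction on $\dim \stdCone$. Here $\hat F$ is self-dual and o.p.-exposed on $\spanVec \hat F$, of strictly smaller dimension. Combining $\pi_H(\stdCone) = \stdCone \cap H$ (from Theorem~\ref{thm:selfdualslice} applied to $\stdCone$ and $H$) with $\pi_{\spanVec \hat F}(\stdCone) = \hat F$ (from the preliminary lemma), and using $\spanVec F \subseteq \spanVec \hat F \cap H$, one obtains $\pi_{\spanVec F}(\stdCone \cap H) = \pi_{\spanVec F}(\hat F)$. By Theorem~\ref{thm:selfdualslice} applied to $\hat F$ on $\spanVec \hat F$ and subspace $\spanVec F$, the desired self-duality of $F$ on $\spanVec F$ is equivalent to $\pi_{\spanVec F}(\hat F) = F$. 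The circularity in this chain of equivalences is the crux of the difficulty; I expect to break it by applying the inductive hypothesis to the pair $(\hat F, H \cap \spanVec \hat F)$ and by a direct verification that $\pi_{\spanVec F}$ carries $\hat F$ into $\stdCone$, which combined with $\pi_{\spanVec F}(\hat F) \subseteq H$ places the image inside $\stdCone \cap H \cap \spanVec F = F$. This final verification, which must leverage simultaneously the global self-duality of $\stdCone$ and the identity $\pi_H(\stdCone) = \stdCone \cap H$, is where I anticipate most of the technical work will concentrate.
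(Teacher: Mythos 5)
Your reduction (showing that every face $F$ of $\stdCone \cap H$ is self-dual on its span, via the equivalence with $\pi_{\spanVec F}(\stdCone \cap H) = F$), your preliminary lemma about faces of an o.p.-exposed self-dual cone, and the identities $F = \hat F \cap H$, $\spanVec F = H \cap \spanVec \hat F$ and $\pi_{\spanVec F}(\stdCone \cap H) = \pi_{\spanVec F}(\hat F) = \pi_{\spanVec F}(\stdCone)$ are all correct, and they correctly isolate the remaining obstacle: one must show $\pi_{H \cap L}(\stdCone) \subseteq \stdCone$, where $L = \spanVec \hat F$. That is precisely the point at which the proposal stops being a proof, and neither of the two routes you offer closes the gap. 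The induction on $\dim \stdCone$ is circular: to apply the inductive hypothesis to the pair $(\hat F,\, H \cap \spanVec \hat F)$ you would need, as a hypothesis, that $\hat F \cap (H \cap \spanVec \hat F) = F$ is self-dual on its span --- which is exactly the conclusion you are trying to reach. And the anticipated ``direct verification that $\pi_{\spanVec F}$ carries $\hat F$ into $\stdCone$'' is not a verification; it is the whole content of the claim. That inclusion is not a formal consequence of the two identities $\pi_L(\stdCone) = \stdCone \cap L$ and $\pi_H(\stdCone) = \stdCone \cap H$: in general $\pi_L \pi_H \neq \pi_{L \cap H}$, so a one-shot composition of the two projections does not land in $\stdCone \cap L \cap H$. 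The paper closes exactly this gap with von Neumann's alternating projection theorem: starting from a point of $\stdCone \cap H$ and alternately applying $\pi_L$ and $\pi_H$, every iterate stays in $\stdCone$ (by the two identities), the iterates converge to the orthogonal projection onto $H \cap L$, and closedness of $\stdCone$ then forces that limit to lie in $\stdCone$. Without this (or a substitute for it) your argument does not go through. Incidentally, once von Neumann is available, the paper's route is shorter than yours: it bypasses the equivalence with ``every face is self-dual on its span'' and the minimal-face construction $\hat F$ entirely, and directly shows $\pi_{H\cap L}(\stdCone \cap H) = F \cap H$ for each face $F$ of $\stdCone$ with $L = \spanVec F$, which is all that o.p.-exposedness of $\stdCone \cap H$ requires.
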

\begin{proof}
The faces of $\stdCone \cap H$  are of the form $F \cap H$, where $F$ is a face of $\stdCone$, e.g., \cite[Section~IV]{Du62}.
So, let $F$ be an arbitrary face of $\stdCone$ and let $L$ be its span.
We will show that the orthogonal projection onto $H \cap L$ sends $\stdCone \cap H$ to $F \cap H$, where the orthogonal projection is computed with respect to $\inProd{\cdot}{\cdot}$, which is the same inner product that turns $\stdCone$ into a self-dual cone.
	
	Since $\stdCone$ is o.p.-exposed, we have 
	\begin{equation}\label{eq:proj_l}
	\pi_{L}(\stdCone)=F,
	\end{equation} where $\pi_L$ is the orthogonal projection onto $L=\spanVec F$. 
	We also know by Theorem \ref{thm:selfdualslice} that 
	\begin{equation}\label{eq:proj_h}
	\pi_H(\stdCone)=\stdCone \cap H,
	\end{equation}
	where $\pi_{H}$ is the orthogonal projection onto $H$.
	Starting with any $x_0 \in \stdCone \cap H$ define
	$y_{i+1}=\pi_L(x_i)$, $x_i=\pi_H(y_i)$, for $i\geq 1$. By \eqref{eq:proj_l} and \eqref{eq:proj_h}, all the $x_i$ and $y_i$ remain in $\stdCone$. Moreover, von Neumann's Theorem for alternate projections onto closed subspaces \cite[Theorem 13.7]{vonNeumann} implies that both the sequences $x_n$ and $y_n$ converge to $\pi_{H\cap L}(x_0)$ which, since $\stdCone$ is closed, implies $\pi_{H \cap L}(x_0)$ is in $\stdCone$, hence in $\stdCone \cap H \cap L$. However, 
	since $F$ is a face of $\stdCone$ and $L$ is its span, we have  $\stdCone \cap L = F$, so we conclude that $\pi_{H \cap L}(x_0) \in F \cap H$.
	Therefore, $\pi_{H \cap L}(\stdCone\cap H) \subseteq F \cap H$. Since $\pi_{H\cap L}(F \cap H) = F\cap H$, we obtain $\pi_{H\cap L}(\stdCone\cap H) = F \cap H$, as intended.
\end{proof}

\paragraph{The main result}
From Proposition~\ref{prop:opexposed}, a self-dual slice of a o.p.-exposed self-dual cone must also be o.p.-exposed. 
But pointed o.p.-exposed 
polyhedral cones were shown in 
\cite[Theorem~3.7]{sung1990study} to be simplicial, i.e., linearly isomorphic to the nonnegative orthant, see also \cite{barker1987projectionally}\footnote{This result was first announced in \cite{barker1987projectionally}, but the proof seems to have a gap as described in \cite[pg.~233]{sung1990study}.}.
Piecing everything together, we obtain  the following result.

\begin{theorem}\label{theo:diag_slack}
	Let $S$ be a PSD slack matrix of a self-dual polyhedral cone $\stdCone$. If $S$ is not diagonal, then it is not completely positive semidefinite.
\end{theorem}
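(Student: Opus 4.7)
The plan is a proof by contradiction: suppose $S$ is both PSD and completely positive semidefinite, and write $S_{ij} = \inProd{A_i}{A_j}$ for some $A_1,\ldots,A_n \in \psdcone{k}$. Setting $H = \spanVec\{A_1,\ldots,A_n\}$, Lemma~\ref{lem:psd_slice} immediately delivers the key geometric realization: $\bar{\stdCone} \coloneqq \psdcone{k} \cap H$ is linearly isomorphic to $\stdCone$ and is self-dual on its span with respect to the Frobenius inner product on $\S^k$. This reduces the problem to analyzing a self-dual slice of the PSD cone.

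The next step is to convert this self-dual slice structure into strong combinatorial rigidity for $\bar{\stdCone}$. The cone $\psdcone{k}$ is symmetric, hence both self-dual and o.p.-exposed with respect to the Frobenius inner product. This is exactly the setting of Proposition~\ref{prop:opexposed} with $H$ as the subspace, so $\bar{\stdCone}$ is o.p.-exposed. Since $\bar{\stdCone}$ is linearly isomorphic to the pointed full-dimensional polyhedral cone $\stdCone$ (as required by Definition~\ref{def:slack}), $\bar{\stdCone}$ is itself pointed and polyhedral. Invoking \cite[Theorem~3.7]{sung1990study} as cited in the paper, every pointed o.p.-exposed polyhedral cone is simplicial, so both $\bar{\stdCone}$ and $\stdCone$ must be simplicial.

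It remains to observe that every PSD slack of a simplicial cone is diagonal, which will contradict the non-diagonality hypothesis on $S$. Let $x_1,\ldots,x_d$ be generators of the extreme rays of a simplicial $\stdCone$; they form a basis of $\Re^d$, and the extreme rays of $\stdCone^*$ are generated by dual-basis vectors $y_1,\ldots,y_d$ satisfying $\inProd{x_i}{y_j}=0$ for $i\neq j$. Consequently every $M \in \S(\stdCone)$ has exactly one positive entry per row and column, so $M = PD$ for a permutation matrix $P$ and a positive diagonal matrix $D$. If $M$ is symmetric, $P$ must be an involution (any longer cycle would break symmetry of the nonzero pattern), and any nontrivial $2$-cycle $(i,j)$ would force a $2\times 2$ principal submatrix of the form $\begin{psmallmatrix}0 & c \\ c & 0\end{psmallmatrix}$ with $c>0$, which is not positive semidefinite. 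Hence $P = I$ and $M$ is diagonal, contradicting the assumption. The main obstacle is the middle step, where one has to translate self-duality of a slice of $\psdcone{k}$ into the simplicial structure via the combined use of Lemma~\ref{lem:psd_slice}, the o.p.-exposedness of $\psdcone{k}$, Proposition~\ref{prop:opexposed}, and the Sung characterization of pointed o.p.-exposed polyhedral cones.
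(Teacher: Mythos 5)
Your proof is correct and follows essentially the same route as the paper's: Lemma~\ref{lem:psd_slice} realizes $\stdCone$ as a self-dual slice $\psdcone{k}\cap H$, Proposition~\ref{prop:opexposed} transfers o.p.-exposedness of $\psdcone{k}$ to that slice, and \cite[Theorem~3.7]{sung1990study} forces the slice (hence $\stdCone$) to be simplicial. The one place you go beyond the paper is in spelling out why a symmetric PSD slack of a simplicial cone must have identity permutation pattern (the $\begin{psmallmatrix}0 & c \\ c & 0\end{psmallmatrix}$ obstruction), a step the paper leaves implicit after noting that simplicial cones have diagonal slacks.
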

\begin{proof}
	We prove the contrapositive statement, so suppose that $S$ is completely positive semidefinite.
	By Lemma \ref{lem:psd_slice}, $\stdCone$ is linearly isomorphic to a slice of $\psdcone{k}$ which must be self-dual with respect the Frobenius inner product.
	$\psdcone{k}$ is also o.p-exposed with respect the Frobenius inner product, which can be inferred from \cite[Example~3.1]{borwein1981regularizing} or from \cite[Proposition~33 and Equation~(18)]{lourencco2021amenable}\footnote{Equation~(18) indicates the inner product used in Proposition~33 of \cite{lourencco2021amenable}. For the algebra of real symmetric matrices, this inner product becomes $\inProd{X}{Y} = \tr(X\circ Y)$, where $X\circ Y \coloneqq (XY +YX)/2$ is the Jordan product in $\S^k$. With that, indeed $\inProd{X}{Y} = \tr(XY)$ holds.}.
	But then, by Proposition \ref{prop:opexposed}, this slice must be o.p.-exposed,
	which would imply 
	 that is simplicial, i.e.,  isomorphic to a nonnegative orthant, by 
	 \cite[Theorem~3.7]{sung1990study}.
	 Therefore, $S$ must be diagonal.	
\end{proof}
In view of Theorem~\ref{theo:slack}, a pointed full-dimensional polyhedral cone $\stdCone \subseteq \Re^d$ has a diagonal slack if and only if $\stdCone$ is simplicial, i.e., isomorphic to a nonnegative orthant. In particular, irreducible cones in $\Re^d$ for $d \geq 3$ never have diagonal slacks. We register this as a corollary of Theorem~\ref{theo:diag_slack}.

\begin{corollary}\label{col:cps}
Let $\stdCone \subseteq \Re^d$ be a self-dual polyhedral cone under some inner product, with $d \geq 3$. If $\stdCone$ is not simplicial (or, in particular, if $\stdCone$ is irreducible), then none of the PSD slacks  of $\stdCone$ are completely positive semidefinite. 
\end{corollary}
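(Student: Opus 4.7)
The plan is to reduce Corollary~\ref{col:cps} directly to Theorem~\ref{theo:diag_slack} by showing that for the cones under consideration the PSD slacks cannot be diagonal. The work therefore splits into two clean pieces: first, characterize when a PSD slack can be diagonal; second, rule this out under the stated hypotheses.

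For the first step, I would take a PSD slack $S$ of $\stdCone$ and suppose $S$ is diagonal. By Remark~\ref{rem:def_slack}, no row of $S$ is zero, so every diagonal entry of $S$ is strictly positive and $S$ must be square of size $d \times d$, since the slack of a pointed full-dimensional polyhedral cone in $\Re^d$ has rank $d$ (as used in Theorem~\ref{thm:self_psd}). Writing $S = I \cdot S$, Theorem~\ref{theo:slack} then yields that $\stdCone$ is linearly isomorphic to $\coner(I) = \Re_+^d$, i.e., $\stdCone$ is simplicial. Taking the contrapositive: if $\stdCone$ is not simplicial, then no PSD slack of $\stdCone$ is diagonal, and Theorem~\ref{theo:diag_slack} immediately gives that no PSD slack is completely positive semidefinite.

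For the parenthetical strengthening, I would note that an irreducible $\stdCone \subseteq \Re^d$ with $d \geq 3$ (or even $d \geq 2$) cannot be simplicial, since the nonnegative orthant $\Re_+^d$ splits nontrivially as $\Re_+ \oplus \Re_+^{d-1}$ with the two summands having independent spans, so the simplicial case is already covered by the main statement.

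I do not expect any real obstacle here: both reductions are essentially bookkeeping on top of Theorem~\ref{theo:slack} and the already-established Theorem~\ref{theo:diag_slack}. The only point requiring a little care is making sure that the ``no zero row'' conclusion of Remark~\ref{rem:def_slack} and the rank $d$ of slack matrices are invoked, so that diagonality of $S$ forces the $d \times d$ shape that triggers simpliciality via Theorem~\ref{theo:slack}.
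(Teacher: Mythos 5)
Your proposal is correct and follows essentially the same route as the paper. The paper's own justification is the short paragraph preceding the corollary, which invokes exactly the chain you spell out: by Theorem~\ref{theo:slack}, a pointed full-dimensional $d$-dimensional polyhedral cone has a diagonal slack if and only if it is simplicial (you supply the ``only if'' details via the rank $d$ constraint and the no-zero-row observation from Remark~\ref{rem:def_slack}, forcing the diagonal slack to be a $d\times d$ matrix with positive diagonal and hence $\stdCone \cong \coner(I) = \Re_+^d$), and then Theorem~\ref{theo:diag_slack} rules out complete positive semidefiniteness for non-diagonal PSD slacks; the parenthetical about irreducibility is handled, as in the paper, by noting that $\Re_+^d$ decomposes.
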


%\begin{proof}
%	Let $F$ be a face of $H \cap K$. Then it is of the form $F' \cap H$ for some face $F'$ of $K$. Let $L$ be the span of $F'$, we need to show that the orthogonal projection onto $H \cap L$ sends $H \cap K$ to $F$.
%	
%	Since $K$ is o.p.-exposed, we have $\pi_L(K)=F'$, where $\pi_L$ is the orthogonal projection onto $L$. We also know by Theorem \ref{thm:selfdualslice} that $\pi_H(K)=K \cap H$. Starting with $x_0 \in H\cap K$ define
%	$y_{i+1}=\pi_L(x_i)$, $x_i=\pi_H(y_i)$, for $i\geq 1$. By our previous observations, all these values remain in $K$. Moreover, von Neumann's Theorem for alternate projections onto closed subspaces \cite[Theorem 13.7]{vonNeumann} implies that both the sequences $x_n$ and $y_n$ converge to $\pi_{K \cap L}(x_0)$ which since $K$ is closed implies $\pi_{K \cap L}(x_0)$ is in $K$, hence in $F$, and we have the intended result.
%\end{proof}

\section{Numerical experiments and conclusions}\label{sec:exp}

The characterization given by Theorem~\ref{thm:self_psd} allows us to try to enumerate self-dual polyhedral cones in some circumstances. Note that a simple combinatorial consequence of Theorem~\ref{thm:self_psd} is that the support of some slack matrix of a self-dual polyhedral cone $\mathcal{K}$  must be symmetric and contain the diagonal. This corresponds to the existence of a bijection $\varphi$ from the extreme rays of $\mathcal{K}$ to those of $\mathcal{K}^*$ (which we identify with facets of $\mathcal{K}$)  such that $v \subseteq \varphi(w)$ if and only if $w \subseteq \varphi(v)$ and such that $v \not \in \varphi(v)$ for all $v,w$ extreme rays of $\mathcal{K}$. This is a purely combinatorial property, and we say that in this case $\mathcal{K}$ is \emph{strongly involutive combinatorially self-dual}. 

This is a very special type of combinatorial self-duality. We say that a polyhedral cone $\stdCone$ is \emph{combinatorially self-dual} if there exist a bijection from the set of its faces to itself that inverts inclusion. Equivalently,  $\stdCone$  is \emph{combinatorially self-dual} if and only if it is combinatorially equivalent to its dual, i.e. the face lattices of  $\stdCone$ and  $\stdCone^*$ are isomorphic. We recall that the face lattice of a polyhedral set is simply the set of its faces together with the partial order given by the inclusion. These notions all have analogues for polytopes, replacing duals by polars, and we will make use of polytopes and cones interchangeably throughout this section.

Combinatorial self-duality in general does not need to be even involutive \cite{grunbaum1988selfduality,jendrol1989non}, i.e., a combinatorially self-dual polyhedral cone may not have a slack matrix with symmetric support, much less be strongly involutive. 

The study of combinatorial self-duality in the context of polytopes has a long history, specially in three dimensions. Efforts to enumerate these go back to the 19th century \cite{kirkman1857xi} and a characterization of combinatorially self-dual $3$-polytopes can be found in \cite{archdeacon1992construction}. The strongly involutive version has been recently studied in \cite{bracho2020strongly}. Since strongly involutive combinatorially  self-dual polytopes are in one-to-one correspondence to strongly involutive combinatorially  self-dual polyhedral cones, one can use this body of literature to generate polyhedral cones that are candidates to being self-dual.

In general, we can make use of enumerations that have been made of combinatorial types of polytopes for fixed dimension and number of vertices. In \cite{brinkmann2013house} we can find complete enumerations of all self-dual $3$-polytopes up to $15$ vertices (up to combinatorial equivalence), while in \cite{FirschingComplete4polys2020} we can find the complete enumeration of all $4$-polytopes of up to $9$ vertices. There seems to be little literature regarding self-duality for polytopes of dimension higher than three so no efforts have been made to enumerate specifically those polytopes.
Using these lists as starting points we checked for strong involutive self-duality by simply using brute force and check for each case if there exists a permutation of the columns of the support of the slack matrix that makes it symmetric and nonzero in the diagonal, thus deriving a list of strongly involutive combinatorially self-dual polytopes.

An alternative approach in dimension $3$ is to use the results of 
\cite{bracho2020strongly} to generate directly only $3$-polytopes that are  strongly involutive combinatorially self-dual, as in that paper they provide a generating procedure guaranteed to generate all of them. We used this approach to double check the results obtained with the previous method and extend it to $3$-polytopes of up to $17$ vertices. 

For three dimensions the results can be seen in Table \ref{tab:sisd3} while for four dimensions we can find it in Table \ref{tab:sisd4}. These correspond to polyhedral cones of dimensions four and five respectively. Note that the case for three dimensional polyhedral cones was settled by \cite{BF76}: they have self-dual realizations if and only if they have an odd number of rays, which happens if and only if they are strongly involutive combinatorially self-dual (see also \cite[Corollary~4.2]{MR4239237}). 
{Here, by a \emph{self-dual realization} of a cone $\stdCone$ we mean a self-dual cone $\stdCone'$ that is combinatorially equivalent to $\stdCone$.}

\begin{table}
\begin{center}
\begin{tabular}{ccccccccccccccc}
vertices   &  $4$  &  $5$  & $6$  & $7$   &  $8$   &  $9$  & $10$   & $11$   & $12$ & $13$   & $14$   & $15$ & $16$ & $17$ \\
s.d.       &$1$  &  $1$  & $2$  & $6$   &  $16$   &  $50$  & $165$   & $554$   & $1908$ & $6667$   & $23556$   & $84048$ & ? & ? \\
 s.i.c.s.d   &$1$  &  $0$  & $1$  & $1$   &  $2$   &  $4$  & $11$   & $24$   & $72$ & $212$   & $674$   & $2195$ & $7447$ & $25529$  \\
\end{tabular}
\end{center}
\caption{Number of $3$-polytopes with between $4$ and $17$ vertices that are combinatorially self-dual and strongly involutive combinatorially self-dual.} \label{tab:sisd3}
\end{table}

\begin{table}
\begin{center}
\begin{tabular}{cccccc}
vertices                                     &   $5$  & $6$  & $7$   &  $8$   &  $9$   \\
%polytopes & 1 &	4 &	31 &	1294 &	274148  \\
s.d.                     & 1  &  1   & 3   & 13    &  69    \\
 s.i.c.s.d  &$1$  &  $0$  & $1$  & $1$   &  $4$    \\
\end{tabular}
\end{center}
\caption{Number of $4$-polytopes with between $5$ and $9$ vertices that are combinatorially self-dual and strongly involutive combinatorially  self-dual.} \label{tab:sisd4}
\end{table}

Having generated these candidates, one still has to verify if they are effectively self-dual, and find an actual self-dual realization. 
While being strongly involutive combinatorially self-dual is a necessary condition, it is a purely combinatorial one, and there is no reason to believe it to be sufficient.

One very naive approach to finding self-duality in our sense given a strongly involutive combinatorially  self-dual cone is to rely on Theorem \ref{thm:self_psd}. If we have a strongly involutive combinatorial self-duality in a $d$-polyhedral cone we have a slack matrix with symmetric support with nonzero elements in the diagonal. We want a nonnegative PSD matrix with rank $d$ and the same support. Searching for positive semidefinite matrices with fixed zero entries is achievable by semidefinite programming. The only problem is the rank condition, which is extremely hard to enforce. Therefore we chose not to enforce it and try to satisfy only the other conditions. \medskip

{\bf Basic semidefinite approach:}
Given a symmetric $0/1$ matrix $M$ with ones in the diagonal, corresponding to the support of the slack matrix guaranteed by the strongly involutive combinatorial self-duality we will solve the semidefinite program:
\begin{align*}
\max_X         \ \    & \sum_{i,j} X_{ij}  \\
\textrm{s.t. }  \    & X_{ij}=0 \textrm{ if } M_{ij}=0; \\
					& X_{ii}=1 \textrm{ for all } i; \\
                   & X \succeq 0.
\end{align*}
Note that not only are we not enforcing the rank, but we are also not directly enforcing the positivity of the non-zero entries, only maximizing their sum. We are also fixing the entries in the diagonal to be one to make the problem compact. Somewhat surprisingly this turns out to be sufficient in many instances.

Straightforwardly solving the program above with MOSEK 8.0.0.60 \cite{mosek} we already get in most cases of Table \ref{tab:sisd3} (all but around three hundred cases) matrices that are numerically convincingly rank $4$, and that have positive entries in all the non-zero prescribed positions. Moreover, whenever the solution from the above semidefinite program fails to be rank $4$ or nonnegative, one can try to change the objective to  $ \sum_{i,j} c_{ij} X_{ij}$ where the $c_{ij}$ are random positive numbers.

With that small change we get semidefinite slack matrices in all the instances of Table \ref{tab:sisd3}. In fact we can get in all cases matrices whose non-zero prescribed entries are at least $10^{-4}$, the first four eigenvalues roughly in the interval between $1$ and $9$ and all the rest having absolute value smaller than $10^{-13}$. Note that this accuracy can in practice essentially be arbitrarily improved by doing alternate projections between the set of matrices of rank at most $4$ and those with the right support.  

Although this does not immediately rigorously prove that all the strongly involutive combinatorially  self-dual polyhedral cones with up to $17$ vertices in dimension $4$ actually have self-dual realizations, since we are not working with exact certificates, it strongly hints at that result. In fact one immediately suspects the following to be true.

\begin{conjecture}
All $4$-dimensional strongly involutive combinatorially self-dual polyhedral cones have self-dual realizations.
\end{conjecture}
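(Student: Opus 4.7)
The plan is to combine the slack-matrix characterization in Theorem~\ref{thm:self_psd} with the constructive description of strongly involutive combinatorially self-dual $3$-polytopes given in \cite{bracho2020strongly}, since a $4$-dimensional pointed polyhedral cone corresponds bijectively to a $3$-polytope via the slicing procedure of Section~\ref{sec:polytope}. Given a strongly involutive combinatorially self-dual cone $\stdCone$ over a $3$-polytope $P$ with $n$ extreme rays, Theorem~\ref{thm:self_psd} combined with Theorem~\ref{thm:comb_slack} reduces the problem to exhibiting a rank-$4$ matrix $X \in \psdcone{n}$ satisfying $X_{ij} > 0$ exactly on the support $M$ dictated by the strong involutive pairing, and $X_{ij}=0$ elsewhere. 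The feasible set ignoring the rank is the spectrahedron $\cS_M = \{X \in \psdcone{n} : X_{ii} = 1,\ X_{ij} = 0 \text{ if } M_{ij} = 0\}$, which is nonempty since the identity belongs to it.

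First I would attempt induction along the generating moves of \cite{bracho2020strongly}, which produce all strongly involutive combinatorially self-dual $3$-polytopes from the tetrahedron (trivially negatively self-polar) by explicit local modifications that interchange paired vertex/facet pairs with new ones. The inductive step would consist of lifting each combinatorial move to a geometric deformation: starting from a self-dual realization at step $k$ and its associated PSD slack, I would perform the corresponding geometric operation along a continuous one-parameter family, simultaneously adjusting the inner product (equivalently, rescaling rows/columns as in Theorem~\ref{thm:self_psd}) so that the modified slack matrix stays PSD throughout. A parameter count comparing $\dim \cS_M$ against the dimension of the projective realization space of the combinatorial type should indicate that self-dual realizations fill out a full-dimensional subset.

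An alternative topological strategy is to work directly on a suitable compactification of the space $\mathcal{R}$ of projective realizations of the combinatorial type of $P$, together with the polarization map $\Phi \colon \mathcal{R} \to \mathcal{R}$ sending a realization to its negative polar. Strong involutive combinatorial self-duality is precisely what makes $\Phi$ a self-map, and negatively self-polar realizations are its fixed points, so a Brouwer- or Lefschetz-type theorem could be invoked once $\mathcal{R}$ has been endowed with enough structure (contractibility, a well-defined degree of $\Phi$, or a symmetry exchangeable with $\Phi$). The attractive feature is that this route bypasses the rank condition entirely.

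The fundamental obstacle for either approach is exactly the rank-$4$ condition. In the inductive approach, a small perturbation of a rank-$4$ PSD matrix inside $\cS_M$ generically stays in $\cS_M$ but may land on a rank-$5$ or higher stratum that no longer factors through a $4$-dimensional realization; controlling this is especially delicate because realization spaces of polytopes exhibit Mn\"ev-type pathologies, so one cannot expect $\cS_M$ to interact cleanly with the realization space in general. In the topological approach, the corresponding difficulty is that the naive polarization map is only defined on the open locus where the polytope contains the origin in its interior, and compactifying without destroying the fixed-point structure is nontrivial. A concrete fallback for the finitely many cases with a bounded number of vertices is to rationalize the numerical SDP solutions of Section~\ref{sec:exp} and verify exactness, settling finite subcases while leaving the full conjecture open.
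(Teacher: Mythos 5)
This statement is a \emph{conjecture} in the paper, not a theorem: the authors offer only numerical evidence from the semidefinite-programming search of Section~\ref{sec:exp} (every strongly involutive combinatorially self-dual type with up to $17$ vertices in dimension~$4$ admitted a numerically rank-$4$ PSD slack with the prescribed support), and they explicitly state that this ``does not immediately rigorously prove'' the claim. So there is no paper proof to compare against, and your proposal does not supply one either: you correctly flag at the end that both of your routes are left open.

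That said, your analysis of the obstacles is accurate and worth recording. Your first route (induction along the generating moves of \cite{bracho2020strongly}, starting from the tetrahedron) is natural and matches the combinatorial scaffolding the paper itself uses to enumerate candidates, but as you note the real difficulty is that the spectrahedron $\cS_M$ generically contains matrices of rank greater than $4$, and nothing forces a continuous deformation of a rank-$4$ PSD slack to stay on the rank-$4$ stratum. The SDP the authors actually solve deliberately \emph{ignores} the rank constraint and then observes, empirically, that the optimizer happens to have rank~$4$; turning that observation into a proof is exactly the gap you identify. Your second route (viewing negative self-polarity as a fixed point of the polarization map on the projective realization space) is a genuinely different idea from anything in the paper and is attractive precisely because it sidesteps the rank issue, but you again correctly point out that the polarization map is only defined where the origin lies in the interior and that compactifying the realization space without losing the fixed-point structure is nontrivial — and Mn\"ev-type behavior means realization spaces need not be well-behaved topologically. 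In short: your proposal is an honest and well-reasoned survey of plausible strategies with a correct diagnosis of why each currently fails, but it does not establish the conjecture, and neither does the paper.
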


For higher dimensions the data is too small to formulate any educated guess. The $7$ examples in Table~\ref{tab:sisd4} all seem to have self-dual realizations, since the semidefinite approach yields similarly positive results, but in fact all but two of them are simply pyramids over examples already in Table~\ref{tab:sisd3} so we have only two new examples. One interesting avenue of progress would be to see if restricting to strongly involutive combinatorially self-dual polytopes or even simply combinatorially self-dual polytopes could make it possible to extend the enumeration efforts of polytopes to higher dimensions and higher number of vertices, as that would provide us with new novel extreme rays of $\dnncone{n}$ as well as possibly give us some evidence on which to base a more general version of the previous conjecture.

To end the section, one should point out that, although only numerically, the slack matrices attained allow one to derive an approximately negatively self-polar polytope (or self-dual polyhedral cone) for each of the instances. Below we offer an example of that. 
The code that implements the process above as well as the other matrices we obtained are  available in the following link.

\url{https://github.com/bflourenco/self_dual_slacks}

\begin{example}
Applying the algorithm described above to a certain $10 \times 10$ support matrix corresponding to a strongly involutive combinatorially  self-dual polytope, we obtained the approximate rank $4$ PSD matrix presented below.
\begin{small}

$$
M=\left(\begin{array}{cccccccccc}
1&0.85962&0.63085&0.60758&0&0&0.32899&0.63085&0&0.60758\\
0.85962&1&0.85962&0.41395&0.41395&0&0&0.85962&0&0.41395\\
0.63085&0.85962&1&0.60758&0.60758&0&0&0.63085&0.32899&0\\
0.60758&0.41395&0.60758&1&0&0&0.54149&0&0.54149&0\\
0&0.41395&0.60758&0&1&0.54149&0&0.60758&0.54149&0\\
0&0&0&0&0.54149&1&0.70679&0.32899&0.70679&0.54149\\
0.32899&0&0&0.54149&0&0.70679&1&0&0.70679&0.54149\\
0.63085&0.85962&0.63085&0&0.60758&0.32899&0&1&0&0.60758\\
0&0&0.32899&0.54149&0.54149&0.70679&0.70679&0&1&0\\
0.60758&0.41395&0&0&0&0.54149&0.54149&0.60758&0&1
\end{array}\right)$$
\end{small}

This might not be a slack matrix of a polytope in the sense we introduced them in this paper, as we have scaled the rows and columns. However we can correct that by performing a singular value decomposition, $M=USV^\T$. There are only $4$ numerically positive singular values, so we can write
$M \approx U' U'^\T$ where $U'$ is the $10 \times 4$ matrix obtained multiplying each of the first four columns of $U$ by the square root of the corresponding entry of the diagonal of $S$. Finally, by Perron-Frobenius the first column of $U'$ has constant sign, so if we divide it by it we get a $10 \times 4$ matrix $\overline{W}=[1,W]$, where $\overline{W} \overline{W}^\T$ is still approximately PSD and has the same support. The rows of $W$ are then approximately the vertices of a negatively self-polar polytope. In this example we get $W^\T$ to be
\begin{small}
$$
\left(\begin{array}{cccccccccc}
-0.44578&-0.62782&-0.44578&0.21387&0.21387&1.5928&1.5928&-0.44578&1.5928&0.21387\\
0.42199& 0&0.1889&1.4123&-0.97561&-0.90532&0.62537&-0.6109&0.27994&-0.43672\\
0.46176&0 &-0.59634&-0.31113&-1.0676&0.19943&0.68431&0.13458&-0.88375&1.3787
\end{array}\right)$$
\end{small}

By looking at the zeros in the support we can reconstruct and plot the facets and we obtain in this case the polytope $P$ in Figure \ref{fig:num_selfdual_poly}. The cone generated by $\{1\} \times P$ is then numerically self-dual.
\begin{figure}
\begin{center}
\includegraphics[width=6cm]{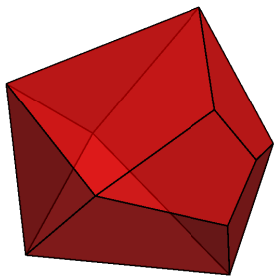}
\end{center}
\caption{A negatively self-polar polytope $P$ obtained numerically from our procedure.} \label{fig:num_selfdual_poly}
\end{figure}
\end{example}

\section*{Acknowledgements}
We thank the referees and the associate editor for their  comments, which helped to improve the paper.
\bibliographystyle{alpha}
\bibliography{selfdual}{}

\newcommand{\etalchar}[1]{$^{#1}$}
\begin{thebibliography}{GMTW20}

\bibitem[AKN20]{AKN20}
Paria Abbasi, Andreas Klingler, and Tim Netzer.
\newblock Approximate completely positive semidefinite rank.
\newblock {\em arXiv e-prints}, 2020.

\bibitem[ApS17]{mosek}
MOSEK ApS.
\newblock {\em MOSEK Optimization Suite 8}, 2017.

\bibitem[AR92]{archdeacon1992construction}
Dan Archdeacon and R~Bruce Richter.
\newblock The construction and classification of self-dual spherical polyhedra.
\newblock {\em Journal of Combinatorial Theory, Series B}, 54(1):37--63, 1992.

\bibitem[Bar78]{Ba78}
George~Phillip Barker.
\newblock Perfect cones.
\newblock {\em Linear Algebra and its Applications}, 22:211--221, 1978.

\bibitem[Bar81]{Bar81}
George~Phillip Barker.
\newblock Theory of cones.
\newblock {\em Linear Algebra and its Applications}, 39:263--291, 1981.

\bibitem[BCGM13]{brinkmann2013house}
Gunnar Brinkmann, Kris Coolsaet, Jan Goedgebeur, and Hadrien M{\'e}lot.
\newblock House of graphs: a database of interesting graphs.
\newblock {\em Discrete Applied Mathematics}, 161(1-2):311--314, 2013.

\bibitem[BDSM15]{BDS15}
Avi Berman, Mirjam D{\"u}r, and Naomi Shaked-Monderer.
\newblock Open problems in the theory of completely positive and copositive
  matrices.
\newblock {\em Electronic Journal of Linear Algebra}, 29:46--58, 2015.

\bibitem[BF76]{BF76}
George~Philip Barker and James Foran.
\newblock Self-dual cones in {E}uclidean spaces.
\newblock {\em Linear Algebra and its Applications}, 13(1):147--155, 1976.

\bibitem[BLP87]{barker1987projectionally}
George~Phillip Barker, Michael Laidacker, and George Poole.
\newblock Projectionally exposed cones.
\newblock {\em SIAM Journal on Algebraic Discrete Methods}, 8(1):100--105,
  1987.

\bibitem[BMPA21]{bracho2020strongly}
Javier Bracho, Luis Montejano, Eric Pérez, and Jorge~Ramírez Alfonsín.
\newblock Strongly involutive self-dual polyhedra.
\newblock {\em Ars Mathematica Contemporanea}, 20(1):143--149, 2021.

\bibitem[BP94]{BP94}
Abraham Berman and Robert~J Plemmons.
\newblock {\em Nonnegative matrices in the mathematical sciences}.
\newblock SIAM, 1994.

\bibitem[BRS15]{bhardwaj2015deciding}
Avinash Bhardwaj, Philipp Rostalski, and Raman Sanyal.
\newblock Deciding polyhedrality of spectrahedra.
\newblock {\em SIAM Journal on Optimization}, 25(3):1873--1884, 2015.

\bibitem[BSM03]{BS03}
Abraham Berman and Naomi Shaked-Monderer.
\newblock {\em Completely Positive Matrices}.
\newblock World Scientific, 2003.

\bibitem[BW81]{borwein1981regularizing}
Jon Borwein and Henry Wolkowicz.
\newblock Regularizing the abstract convex program.
\newblock {\em Journal of Mathematical Analysis and Applications},
  83(2):495--530, 1981.

\bibitem[Cha09]{Ch09}
Robert Chares.
\newblock {\em Cones and interior-point algorithms for structured convex
  optimization involving powers and exponentials}.
\newblock {PhD} thesis, Universit\'{e} catholique de Louvain, 2009.

\bibitem[Dub62]{Du62}
Lester~E. Dubins.
\newblock On extreme points of convex sets.
\newblock {\em Journal of Mathematical Analysis and Applications},
  5(2):237--244, 1962.

\bibitem[Fay08]{Fa08}
Leonid Faybusovich.
\newblock Several {J}ordan-algebraic aspects of optimization.
\newblock {\em Optimization}, 57(3):379--393, 2008.

\bibitem[Fir20]{FirschingComplete4polys2020}
Moritz Firsching.
\newblock The complete enumeration of 4-polytopes and 3-spheres with nine
  vertices.
\newblock {\em Israel Journal of Mathematics}, 2020.

\bibitem[FK94]{FK94}
Jacques Faraut and Adam Korányi.
\newblock {\em Analysis on symmetric cones}.
\newblock Oxford mathematical monographs. Clarendon Press, Oxford, 1994.

\bibitem[FMP{\etalchar{+}}12]{FMPTHW12}
Samuel Fiorini, Serge Massar, Sebastian Pokutta, Hans~Raj Tiwary, and Ronald
  De~Wolf.
\newblock Linear vs. semidefinite extended formulations: exponential separation
  and strong lower bounds.
\newblock In {\em Proceedings of the forty-fourth annual ACM symposium on
  Theory of computing}, pages 95--106, 2012.

\bibitem[FW14]{frenkel2014vector}
P{\'e}ter~E Frenkel and Mih{\'a}ly Weiner.
\newblock On vector configurations that can be realized in the cone of positive
  matrices.
\newblock {\em Linear Algebra and its Applications}, 459:465--474, 2014.

\bibitem[GdLL17]{gribling2017matrices}
Sander Gribling, David de~Laat, and Monique Laurent.
\newblock Matrices with high completely positive semidefinite rank.
\newblock {\em Linear Algebra and its Applications}, 513:122--148, 2017.

\bibitem[GGK{\etalchar{+}}13]{slack}
Jo{\~a}o Gouveia, Roland Grappe, Volker Kaibel, Kanstantsin Pashkovich,
  Richard~Z Robinson, and Rekha~R Thomas.
\newblock Which nonnegative matrices are slack matrices?
\newblock {\em Linear Algebra and its Applications}, 439(10):2921--2933, 2013.

\bibitem[GMTW19]{MR4002715}
Joao Gouveia, Antonio Macchia, Rekha~R. Thomas, and Amy Wiebe.
\newblock The slack realization space of a polytope.
\newblock {\em SIAM J. Discrete Math.}, 33(3):1637--1653, 2019.

\bibitem[GMTW20]{GMRW20}
Jo{\~a}o Gouveia, Antonio Macchia, Rekha~R Thomas, and Amy Wiebe.
\newblock Projectively unique polytopes and toric slack ideals.
\newblock {\em Journal of Pure and Applied Algebra}, 224(5), 2020.

\bibitem[GMW22]{gouveia2022combining}
Jo{\~a}o Gouveia, Antonio Macchia, and Amy Wiebe.
\newblock Combining realization space models of polytopes.
\newblock {\em Discrete \& Computational Geometry}, pages 1--38, 2022.

\bibitem[GMW23]{GMA23}
Joao Gouveia, Antonio Macchia, and Amy Wiebe.
\newblock General non-realizability certificates for spheres with linear
  programming.
\newblock {\em Journal of Symbolic Computation}, 114:172--192, 2023.

\bibitem[GPT13]{GPT13}
Joao Gouveia, Pablo~A. Parrilo, and Rekha~R. Thomas.
\newblock Lifts of convex sets and cone factorizations.
\newblock {\em Mathematics of Operations Research}, 38(2):248--264, 2013.

\bibitem[GS88]{grunbaum1988selfduality}
Branko Gr{\"u}nbaum and Geoffrey~C Shephard.
\newblock Is selfduality involutory?
\newblock {\em The American Mathematical Monthly}, 95(8):729--733, 1988.

\bibitem[GT14]{GT14}
M.~Seetharama Gowda and D.~Trott.
\newblock On the irreducibility, {L}yapunov rank, and automorphisms of special
  {B}ishop–{P}helps cones.
\newblock {\em J. Math. Anal. Appl.}, 419(1):172--184, 2014.

\bibitem[HJL96]{HL96}
Crista~Lee Hamilton-Jester and Chi-Kwong Li.
\newblock Extreme vectors of doubly nonnegative matrices.
\newblock {\em The Rocky Mountain Journal of Mathematics}, pages 1371--1383,
  1996.

\bibitem[IL19]{IL19}
Masaru Ito and Bruno~F. Lourenço.
\newblock The automorphism group and the non-self-duality of p-cones.
\newblock {\em Journal of Mathematical Analysis and Applications}, 471(1):392
  -- 410, 2019.

\bibitem[Jen89]{jendrol1989non}
Stanislav Jendrol.
\newblock A non-involutory selfduality.
\newblock {\em Discrete mathematics}, 74(3):325--326, 1989.

\bibitem[Jen21]{MR4239237}
Alathea Jensen.
\newblock Self-polar polytopes.
\newblock In {\em Polytopes and discrete geometry}, volume 764 of {\em Contemp.
  Math.}, pages 101--124. American Mathematical Society, 2021.

\bibitem[Kir57]{kirkman1857xi}
Thomas~Penyngton Kirkman.
\newblock {XI}. on autopolar polyedra.
\newblock {\em Philosophical Transactions of the Royal Society of London},
  (147):183--215, 1857.

\bibitem[LLP21]{LLP21}
Scott~B Lindstrom, Bruno~F Louren{\c{c}}o, and Ting~Kei Pong.
\newblock Optimal error bounds in the absence of constraint qualifications with
  applications to the $p$-cones and beyond.
\newblock {\em arXiv preprint arXiv:2109.11729}, 2021.

\bibitem[Lou21]{lourencco2021amenable}
Bruno~F. Louren{\c{c}}o.
\newblock Amenable cones: error bounds without constraint qualifications.
\newblock {\em Mathematical Programming}, 186(1):1--48, 2021.

\bibitem[Lov83]{lovasz1983self}
L{\'a}sl{\'o} Lov{\'a}sz.
\newblock Self-dual polytopes and the chromatic number of distance graphs on
  the sphere.
\newblock {\em Acta Sci. Math.(Szeged)}, 45(1-4):317--323, 1983.

\bibitem[LP15]{laurent2015conic}
Monique Laurent and Teresa Piovesan.
\newblock Conic approach to quantum graph parameters using linear optimization
  over the completely positive semidefinite cone.
\newblock {\em SIAM Journal on Optimization}, 25(4):2461--2493, 2015.

\bibitem[LRS15]{LRS15}
James~R Lee, Prasad Raghavendra, and David Steurer.
\newblock Lower bounds on the size of semidefinite programming relaxations.
\newblock In {\em Proceedings of the forty-seventh annual ACM symposium on
  Theory of computing}, pages 567--576, 2015.

\bibitem[LRS22]{LRS20}
Bruno~F. Louren\c{c}o, Vera Roshchina, and James Saunderson.
\newblock Amenable cones are particularly nice.
\newblock {\em SIAM Journal on Optimization}, 32(3):2347--2375, 2022.

\bibitem[LS75]{LS75}
Raphael Loewy and Hans Schneider.
\newblock Indecomposable cones.
\newblock {\em Linear Algebra and its Applications}, 11(3):235--245, 1975.

\bibitem[Orl22]{Or20}
Michael Orlitzky.
\newblock On the symmetry of induced norm cones.
\newblock {\em Optimization}, 71(3):441--447, 2022.

\bibitem[PL88]{poole1988projectionally}
George~D Poole and Michael Laidacker.
\newblock Projectionally exposed cones in {R3}.
\newblock {\em Linear Algebra and its Applications}, 111:183--190, 1988.

\bibitem[PSVW18]{prakash2018completely}
Anupam Prakash, Jamie Sikora, Antonios Varvitsiotis, and Zhaohui Wei.
\newblock Completely positive semidefinite rank.
\newblock {\em Mathematical Programming}, 171(1):397--431, 2018.

\bibitem[Roc97]{RT97}
R.~T. Rockafellar.
\newblock {\em {Convex Analysis}}.
\newblock Princeton University Press, 1997.

\bibitem[Ros14]{Vera14}
Vera Roshchina.
\newblock Facially exposed cones are not always nice.
\newblock {\em SIAM J. Optim.}, 24(1):257--268, 2014.

\bibitem[ST90]{sung1990study}
Chen-Han Sung and Bit-Shun Tam.
\newblock A study of projectionally exposed cones.
\newblock {\em Linear Algebra and its Applications}, 139:225--252, 1990.

\bibitem[Tam76]{Ta76}
Bit-Shun Tam.
\newblock A note on polyhedral cones.
\newblock {\em Journal of the Australian Mathematical Society},
  22(4):456–461, 1976.

\bibitem[TE14]{tiwary2014self}
Hans~Raj Tiwary and Khaled Elbassioni.
\newblock Self-duality of polytopes and its relations to vertex enumeration and
  graph isomorphism.
\newblock {\em Graphs and combinatorics}, 30(3):729--742, 2014.

\bibitem[Var09]{Va09}
R.S. Varga.
\newblock {\em Matrix Iterative Analysis}.
\newblock Springer Series in Computational Mathematics. Springer Berlin
  Heidelberg, 2009.

\bibitem[vN50]{vonNeumann}
John von Neumann.
\newblock {\em Functional Operators, Volume 2: The Geometry of Orthogonal
  Spaces.}
\newblock Princeton University Press, 1950.

\bibitem[WZ20]{wang2020lifts}
Chu Wang and Lihong Zhi.
\newblock Lifts of non-compact convex sets and cone factorizations.
\newblock {\em Journal of Systems Science and Complexity}, 33(5):1632--1655,
  2020.

\bibitem[Yan91]{Ya91}
Mihalis Yannakakis.
\newblock Expressing combinatorial optimization problems by linear programs.
\newblock {\em Journal of Computer and System Sciences}, 43(3):441--466, 1991.

\end{thebibliography}

\end{document}